\author{Jan Bouwe van den Berg\thanks{Department of Mathematics, Vrije Universiteit Amsterdam, The Netherlands, \href{mailto:janbouwe@few.vu.nl}{janbouwe@few.vu.nl}; partially supported by NWO-VICI grant 639033109},  Wouter Hetebrij\thanks{Department of Mathematics, Vrije Universiteit Amsterdam, The Netherlands, \href{mailto:w.a.hetebrij@vu.nl}{w.a.hetebrij@vu.nl}.} \ and Bob Rink\thanks{Department of Mathematics, Vrije Universiteit Amsterdam, The Netherlands, \href{mailto:b.w.rink@vu.nl}{b.w.rink@vu.nl}.}}
\title{More on the parameterization method for center manifolds}
	\crefname{equation}{}{}
	\crefname{enumi}{}{}
\newcounter{Rest}
\newtheorem{theorem}[Rest]{Theorem}
\newtheorem*{theorem*}{Theorem}
\newtheorem{corollary}[Rest]{Corollary}
\newtheorem*{corollary*}{Corollary}
\newtheorem{lemma}[Rest]{Lemma}
\newtheorem*{lemma*}{Lemma}
\newtheorem{proposition}[Rest]{Proposition}
\theoremstyle{remark}
\newtheorem{remark}[Rest]{Remark}
\newtheorem*{remark*}{Remark}
\numberwithin{equation}{subsection}
\numberwithin{Rest}{section}
\newcommand{\supnorm}[1]{\| #1 \|_0}
\newcommand{\operatornorm}[1]{\| #1 \|_{\textup{op}}}
\newcommand{\norm}[1]{\| #1 \|}
\newcommand{\tmatrix}[3]{\left( \begin{smallmatrix} #1 \\ #2 \\ #3 \end{smallmatrix} \right)}
\newcommand{\isdef}{:=}
\newcommand{\isfed}{=:}
\newcommand{\ds}{\text{d}s}
\newcommand{\taumap}{time $\tau$-map}
\newcommand{\tmap}{time $t$-map}
\newcommand{\tmaps}{time $t$-maps}
\newcommand{\functional}[1]{\tau(#1)}
\newcommand{\functionalsub}[2]{\tau_{#1}(#2)}
\begin{document}

\maketitle

\begin{abstract}
In a previous paper, see \cite{Hetebrij19}, we generalized the parameterization method of Cabr\'{e}, Fontich and De la Llave to center manifolds of discrete dynamical systems. In this paper, we extend this result to several different settings. The natural setting in which center manifolds occur is at bifurcations in dynamical systems with parameters. Our first results will show that we can find parameter-dependent center manifolds near bifurcation points. 
Furthermore, we will generalize the parameterization method to center manifolds of fixed points of ODEs. Finally, we will apply our method to a reaction diffusion equation. In our application, we will show that the freedom to obtain the conjugate dynamics in normal form makes it possible to obtain detailed qualitative information about the center dynamics.
\end{abstract}

\section{Introduction}

The parameterization method introduced by Cabr\'{e}, Fontich and De la Llave in \cite{Cabre03,Cabre03-2,Cabre05} is used to find (un)stable manifolds associated to hyperbolic equilibria in dynamical systems. In a previous paper, \cite{Hetebrij19}, we gave a generalization of the parameterization method which can be used to find center manifolds at fixed points of  discrete dynamical systems. 

The original method for (un)stable manifolds has been applied to delay differential equations, see \cite{Groothedde17}, and partial differential equations, see \cite{Reinhardt19}. Furthermore, the method is useful for computational existence proofs of for example homoclinic and heteroclinic orbits, see \cites{VandenBerg11,Lessard14,VandenBerg15-2}. The method has also been used for constructing (un)stable manifolds of periodic orbits, see \cite{Castelli18}. Finally, the method was generalized in \cite{Baldoma16,Baldoma16-2} to find invariant manifolds for parabolic fixed points. 

One of the goals of this paper is to give a generalization of the parameterization method for center manifolds in systems with parameters. For a discrete dynamical system, $F : X \to X$ on a Banach space $X$, the parameterization method constructs a conjugacy $K$, the parameterization of the center manifold, between the center subspace  $X_c$ and the the center manifold, as well as a conjugate dynamical system $R : X_c \to X_c$ on the center subspace such that $K(X_c)$ lies tangent to the center subspace and the conjugacy equation
\begin{align}
F \circ K = K \circ R \label{ConjugayEquationIntroduction}
\end{align}
is satisfied. In other words, orbits of $R$ on the center subspace are mapped by $K$ to orbits of $F$ on the center manifold. 

To obtain $K$ and $R$ such that \cref{ConjugayEquationIntroduction} holds, in \cite{Hetebrij19} we first rewrite  \cref{ConjugayEquationIntroduction} as a fixed point problem of the form $(K,R) = \Theta(K,R)$. We prove that $\Theta$ is a contraction on a suitable function space to prove the existence of both $K$ and $R$. We refer the reader to \cite{Hetebrij19} for the details of this proof.

The first generalization we present here is finding a smooth parameter-dependent conjugacy for a dynamical system with parameters. Center manifolds naturally occur at bifurcations, and the change in dynamical behaviour before and after the bifurcation takes place on the center manifold. Thus finding a parameter-dependent center manifold at the bifurcation point allows us to quantitatively describe changes in dynamical behaviour. Using normal form theory, see for instance \cite{Chow94}, one can also obtain qualitative information about the dynamical behaviour near the bifurcation point. The main advantage of our method is that we have the freedom to obtain the Taylor approximation of the conjugate dynamical system in normal form and at the same time  obtain explicit bounds on the difference between the conjugate dynamical system and the normal form. Those explicit bounds allow us to obtain quantitative information about the dynamical behaviour near the bifurcation point. In the proof, we will extend the original dynamical system by adding the parameters as new variables. The main challenge in the proof of this generalization consists of choosing the right norm on the extended Banach space $\Lambda \times X$, where $\Lambda$ is our parameter space.

We also want to generalize our method to center manifold in ODEs, i.e\ continuous time dynamical systems. For continuous time dynamical systems given by an ODE $\dot{x} = f(x)$, we want to construct a conjugacy $K$ between the center subspace and the center manifold as well as a vector field $\dot{x} = R(x)$ on the center subspace such that orbits of $R$ are mapped by $K$ to orbits of $f$, analogous to what we did for discrete dynamical systems.  This means that if we take a solution $y(t)$ of the ODE $\dot{x} = R(x)$ on the center subspace, $K(y(t))$ should be a solution of the ODE $\dot{x} = f(x)$ on the center manifold, i.e. \ 
\begin{align}
 f(K(y(t)) = DK(y(t)) \cdot R(y(t)). \label{IntroductionEquation}
\end{align}
The main difference with the conjugacy equation \cref{ConjugayEquationIntroduction} of the discrete case, is the spatial derivative of $K$ at the right hand side of \cref{IntroductionEquation}. In the discrete case we rewrote \cref{ConjugayEquationIntroduction} as a fixed point problem, $(K,R) = \Theta(K,R)$, and showed that $\Theta$ is a contraction. If we would use the same strategy in the continuous case, we would try to rewrite \cref{IntroductionEquation} as a fixed point problem $(K,R) = \Theta(K,R)$. However, we now encounter two problems we have to overcome. The center manifold, and therefore $K$, that we obtain will only be $C^n$, whereas $\Theta$ is a differential operator. Thus we must find a suitable function space $A$ on which $\Theta$ is well-defined, as well as define a norm on $A$ and find a set $B \subset A$ such that $\Theta$ is a contraction on $B$. To circumvent those two problems, we will solve an equivalent problem to obtain $K$ and $R$.

If $K$ and $R$ solve \cref{IntroductionEquation}, then $K$ is also a conjugacy between the \tmap\ on the center subspace and the \tmap\ on the center manifold, for any $t \ge 0$, where we will denote the latter \tmap\ by $\varphi_t$. Conversely, if we find a single conjugacy $K$ and a semigroup of discrete dynamical system $\{\psi_t\}_{t \ge 0}$ on the center subspace such that 
\begin{align*}
K \circ \psi_t = \varphi_t \circ K \qquad \text{ for all } t \ge 0,
\end{align*}
then we choose $R$ to be the infinitesimal generator of $\{ \psi_t\}_{t\ge 0}$ and $K$ maps orbits on the center subspace to orbits on the center manifold., i.e.\ $K$ and $R$ solve \cref{IntroductionEquation} The main advantage of this equivalence is that the \tmap\ of $f$ is a discrete dynamical system, for which we can use our parameterization method from \cite{Hetebrij19}. On the other hand, instead of finding a conjugacy and a conjugate dynamical system for one dynamical system, we need to find a collection of conjugate dynamical systems and a single conjugacy (independent of $t$) for a collection of dynamical systems.  Hence we will prove that the conjugacy we obtain for a single \tmap\ of $f$ will be a conjugacy for \emph{all} $t >0$ for a judicious choice of $\psi_t$.

We will demonstrate (the strength of) our method with an application which is simple yet illustrates the essential steps involved. For our application, we consider the spatial dynamics of a reaction diffusion equation on a 2D grid. By varying a parameter in the system, a period-doubling bifurcation occurs. Using the freedom to obtain the conjugate dynamics in normal form, and by computing explicit bounds on the difference between the normal form and the conjugate dynamics, we obtain explicit regions in the phase space in which a period-2 orbit must lie after the bifurcation. To obtain the explicit error bounds and subsequently the explicit regions of validity, we use the Mathematica Notebook available in \cite{Mathematica}. Furthermore, using the explicit error bounds, we also prove that a heteroclinic orbit emerges between the period-2 orbit and the stationary point.

\subsubsection*{Outline of the paper}

The paper consists of three parts. We first introduce notation and restate our main theorem from \cite{Hetebrij19}. In \cref{SectionParameters} we will give the generalization of the parameterization method to dynamical systems with parameters. In addition to this first generalization, we will also show how one can compute explicit error bounds for the Taylor approximations of $K$ and $R$. We then continue in \cref{SectionODE} with the generalization to center manifolds in continuous time dynamical systems. Our proof will use the equivalent problem of finding a conjugate dynamical systems for the \tmaps\ and a single conjugacy, which is done using multiple intermediate steps. Finally, we conclude the paper in \cref{ExampleSection} with the application of the parameterization method to obtain a period doubling-bifurcation in a reaction diffusion equation.

\subsection{Notation and conventions}\label{SectionNotation}

We use the following notation and conventions in this paper.
\begin{itemize}
\item For functions $f: X \to Y$ between Banach spaces, we denote with
\begin{align*}
\norm{f}_n \isdef \max_{0 \le m \le n} \sup_{x \in X} \norm{D^mf(x)}
\end{align*}
the $C^n$ norm of $f$ for $n \ge 0$.

For $X$ and $Y$ Banach spaces, we denote with 
\begin{align*}
C^n_b(X,Y) &\isdef \left\{ f  : X \to Y \mid f \text{ is } C^n \text{ and } \norm{f}_n < \infty \right\}.
\end{align*}
the Banach space of all $C^n$ bounded functions between $X$ and $Y$.

\item For a bounded linear operator $A: X \to Y$ between Banach spaces, we denote with
\begin{align*}
\operatornorm{A} \isdef \sup_{\norm{x} =1} \norm{Ax}
\end{align*}
the operator norm of $A$.

For $X$ and $Y$ Banach spaces, we denote with 
\begin{align*}
\mathcal{L}(X,Y) \isdef \left\{ A: X \to Y \mid A \text{ is a linear operator and } \operatornorm{A} < \infty \right\}
\end{align*}
the Banach space of all bounded linear operators between $X$ and $Y$.

\item For an unbounded linear operator $A : \mathcal{D}(A) \subset X \to Y$ between Banach spaces, we denote with $\mathcal{D}(A)$ its domain, i.e.\ $x \in \mathcal{D}(A)$ if and only if $Ax$ exists and lies in $Y$.

Furthermore, we call an unbounded linear operator $A$ sectorial if it satisfies the following three properties.
\begin{itemize}
\item The operator $A$ is closed, i.e.\ the graph of $A$ is closed in $X \times Y$, and densely defined, i.e.\ $\mathcal{D}(A)$ is dense in $X$.
\item There exist constants $w \in \mathbb{R}$ and $\lambda > 0$ such that the spectrum $\sigma(A)$ is contained in the sector $\left\{ z \in \mathbb{C} \ \middle| \ |\operatorname{Im}(z)| < \lambda \operatorname{Re}(w - z) \right\}$.
\item There exists a constant $C > 0$ such that for all $z \in \mathbb{C}$  outside the sector $\left\{ z \in \mathbb{C} \ \middle| \ |\operatorname{Im}(z)| < \lambda \operatorname{Re}(w - z) \right\}$ the linear operator $(A - z \operatorname{Id})^{-1}$ is bounded by $C/ |z - w|$.
\end{itemize}

\item Let $\varepsilon > 0$ and $U \subset \mathbb{R}^m$. We denote with $U^\varepsilon \isdef  \left\{ x \in \mathbb{R}^m \ \middle| \ \text{dist}(x,U) < \varepsilon \right\}$ the $\varepsilon$-neighborhood of $U$.

\end{itemize}

\subsection{Parameterization theorem for center manifolds}

For the sake of completeness, we will repeat the statement of the paramaterization method for center manifolds for discrete systems in \cite{Hetebrij19}.

\begin{theorem}[Parameterization of the center manifold]\label{MainTheorem} Let $X$ be a Banach space and $F{}{}: X \to X $ a  $C^n$, $n \ge 2$, discrete dynamical system on $X$ such that $0$ is a fixed point of $F{}{}$. Denote $F{}{} = A + g{}{}$ with $A \isdef Df(0)$ and let $k_c : X_c \to X_c$ be chosen. Assume that
\begin{enumerate}
\item\label{MainTheoremSubpaces } There exist closed $A$-invariant subspaces $X_c$, $X_u$ and $X_s$ such that $X = X_c \oplus X_u \oplus X_s$. We write $A = \tmatrix{A_c & 0 & 0}{0 & A_u & 0}{0 & 0 & A_s}$ where we define $A_c \isdef A \big|_{X_c}$, and similarly define $A_u$ and $A_s$.
\item[1a.] The norm on $X = X_c \oplus X_u \oplus X_h$ satisfies
\begin{align}
\norm{x} = \max \left\{ \norm{x_c}_c,\norm{x_u}_u,\norm{x_s}_s \right\}, \qquad \text{ for } x = (x_c,x_u,x_s) \label{DesiredPropertyNorm}
\end{align}
where $x_{i} \in X_{i}$ and $\norm{\cdot}_{i}$ is the norm on $X_{i}$ for $i = c,u,s$.
\item\label{MainTheoremOperators } The linear operators $A_c$ and $A_u$ are invertible.
\item\label{MainTheoremNorm } The norm on $X$ is such that
\begin{align*}
\operatornorm{A_c^{-1} }^{\tilde{n}} \operatornorm{ A_s } < 1 && \text{ and } && \operatornorm{A_u^{-1} } \operatornorm{A_c}^{\tilde{n}} < 1 && \text{ for all }  1 \le \tilde{n} \le n.
\end{align*}
\item\label{MainTheoremKnownBounds } The non-linearities $g$ and $k_c$ satisfy
\begin{align*}
g{}{} &\in \left\{ h \in C^{n}_b(X,X) \mid h(0)=0, \   Dh(0) = 0 \text{ and }\supnorm{Dh} < L_g \right\}, \\ 
k_c &\in \left\{ h \in C^{n}_b(X_c,X_c) \mid h(0) = 0 , \ Dh(0) = 0 \text{ and } \supnorm{Dh} < L_c \right\},
\end{align*}
for $L_g$ and $L_c$ small enough, as defined in Remark 2.4 of \cite{Hetebrij19}.
\end{enumerate}
Then there exist a $C^{n}$ conjugacy $K{}{} :  X_c \to X$ and $C^{n}$ discrete dynamical system $R = A_c + r : X_c \to X_c$ such that
\begin{align}
(A + g{}{}) \circ K{}{} = K{}{} \circ (A_c +r) \label{ConjugacyEquation}.
\end{align}
Furthermore, $A_c + r$ is globally invertible and $K{}{} =  \iota +  \tmatrix{k_c}{k_u}{k_s}$ with $\iota: X_c \to X$ the inclusion map.
\end{theorem}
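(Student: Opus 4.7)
The plan is to reformulate the conjugacy equation \cref{ConjugacyEquation} as a fixed-point problem for the triple $(k_u, k_s, r)$, with $k_c$ held fixed as prescribed. Projecting both sides of \cref{ConjugacyEquation} onto $X_c$, $X_u$, and $X_s$ and substituting $K = \iota + (k_c, k_u, k_s)$ and $R = A_c + r$, one arrives at the three scalar equations
\begin{align*}
r(x_c) &= A_c k_c(x_c) + g_c(K(x_c)) - k_c(R(x_c)), \\
A_u k_u(x_c) &= k_u(R(x_c)) - g_u(K(x_c)), \\
k_s(R(x_c)) &= A_s k_s(x_c) + g_s(K(x_c)).
\end{align*}
By the invertibility of $A_c$ and $A_u$, the first two lines become explicit fixed-point equations for $r$ and $k_u$, in which $r$ also appears implicitly on the right through $R$. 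For the stable component, $A_s$ is a contraction but need not be invertible; once $R$ is shown to be invertible, this line can be rewritten as $k_s(x_c) = A_s k_s(R^{-1}(x_c)) + g_s(K(R^{-1}(x_c)))$, again a contractive fixed-point equation, now for $k_s$.

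Next I would bundle these three right-hand sides into a single operator $\Theta$ on a complete metric space of triples $(k_u, k_s, r) \in C^n_b$ satisfying $k_u(0) = k_s(0) = r(0) = 0$, vanishing first derivatives at $0$, and a uniform bound on $\norm{\cdot}_n$. The aim is to show that, for $L_g$ and $L_c$ sufficiently small, $\Theta$ preserves this set and is a contraction in a carefully weighted norm. The decisive estimates come from the spectral-gap conditions $\operatornorm{A_u^{-1}} \operatornorm{A_c}^{\tilde n} < 1$ and $\operatornorm{A_c^{-1}}^{\tilde n} \operatornorm{A_s} < 1$: differentiating the fixed-point equations $\tilde n$ times and applying the chain rule in its Fa\`a di Bruno form generates factors of $\operatornorm{A_c}^{\tilde n}$ coming from the derivatives of $R$, and these are dominated precisely by the gap assumption. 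The compatibility requirement \cref{DesiredPropertyNorm} of the component norms with the ambient norm on $X$ is what allows the component-wise estimates to combine cleanly; without this, terms of the form $g(K(x_c))$ could not be controlled in each component separately.

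The principal obstacle is the simultaneous control of all derivative orders $0 \le m \le n$: the contraction estimate for $D^m$ depends on lower-order derivatives of $(k_u, k_s, r)$ composed with $R$, and $R$ itself contains $r$, so the full $n$-jet must be bounded in one stroke. This is exactly why the gap conditions are imposed for every $\tilde n$ from $1$ to $n$ rather than only for $\tilde n = n$. Once the Banach fixed-point theorem produces $(k_u, k_s, r)$, the corresponding $K = \iota + (k_c, k_u, k_s)$ and $R = A_c + r$ are $C^n$ by construction and satisfy \cref{ConjugacyEquation} by the very definition of the fixed-point equations. Finally, global invertibility of $R$ follows from the smallness of $\supnorm{Dr}$: since $\operatornorm{A_c^{-1}} \supnorm{Dr} < 1$ can be arranged by taking $L_g, L_c$ small, the map $\operatorname{Id} + A_c^{-1} r$ is a homeomorphism of $X_c$ by a standard near-identity argument, hence so is $R = A_c \circ (\operatorname{Id} + A_c^{-1} r)$.
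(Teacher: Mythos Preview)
Your proposal is correct and matches the approach that the paper attributes to \cite{Hetebrij19}: the theorem is only restated here, not re-proved, but the introduction summarizes the argument as rewriting \cref{ConjugacyEquation} as a fixed-point problem $(K,R)=\Theta(K,R)$ and showing that $\Theta$ is a contraction on a suitable $C^n_b$ space, which is precisely your plan. The component-wise splitting you describe (inverting $A_u$ on the unstable equation and composing with $R^{-1}$ on the stable one) is exactly the structure visible later in the paper, e.g.\ in \cref{TaylorBoundsEq2}, and your explanation of why the gap condition is needed for every $1\le\tilde n\le n$ and why $R$ is globally invertible is in line with the paper's usage.
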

\begin{remark}\label{MainTheoremEquivalentNorm}
If $F{}{}$ is $C^n$ with respect to an arbitrary norm on $X$ and condition \cref{MainTheoremSubpaces } is satisfied, we can define an equivalent norm on $X$ which satisfies condition 1a and leaves the norm unchanged on $X_c$, $X_u$ and $X_s$. The reason for asking condition 1a is that it allows us to make our estimates explicit.
\end{remark}
\begin{remark}The original version of \cref{MainTheorem} states condition \cref{MainTheoremKnownBounds } as: 
\begin{itemize}
\item[\cref{MainTheoremKnownBounds }.]The non-linearities $g$ and $k_c$ satisfy
\begin{align*}
g{}{} &\in \left\{ h \in C^{n}_b(X,X) \mid h(0)=0, \   Dh(0) = 0 \text{ and }\supnorm{Dh} \le L_g \right\}, \\ 
k_c &\in \left\{ h \in C^{n}_b(X_c,X_c) \mid h(0) = 0 , \ Dh(0) = 0 \text{ and } \supnorm{Dh} \le L_c \right\},
\end{align*}
for $L_g$ and $L_c$ small enough.
\end{itemize}
Hence it seems like our statement of the theorem is slightly weaker than in \cite{Hetebrij19}. However $L_g$ and $L_c$ are defined in terms of strict inequalities. Thus we can replace the less than or equal to signs in condition 4 with strict inequalities, if we also replace the strict inequalities in the definition of $L_g$ and $L_c$ with less than or equal to signs.
\end{remark}

\begin{remark}
For a general dynamical system, the non-linearity $g$ will be unbounded in $C^n$. To satisfy the fourth assumption, the usual approach in $\mathbb{R}^n$ is to multiply $g$ with a smooth cut-off function $\xi$, such that $g \xi$ is bounded in $C^n$. By shrinking the support of the cut-off function, the norm of the derivative of $g \xi$ can then be made as small as desired. If we apply \cref{MainTheorem} to the map $x \mapsto Ax + g(x) \xi(x)$, we find a local center manifold for our original system, which is valid on the region where $\xi \equiv 1$. In \cref{ExampleSection} we will see a different trick to bound the derivative of $g$.
\end{remark}

\section{Parameter-dependent systems}\label{SectionParameters}

Center manifolds naturally occur at bifurcations, which can happen in dynamical systems with parameters. Hence a natural setting to which we want to generalize \cref{MainTheorem}, are discrete dynamical systems with parameters. For simplicity, we will assume without loss of generality that for $\lambda = 0$ there exists a center manifold.

\begin{theorem}\label{ParametersTheorem} Let $X$ be a Banach space, $W \subset \mathbb{R}^m$ open and $F{}{}: W \times X \to X $ a  jointly $C^n$, $n \ge 2$, discrete dynamical system on $X$ with parameter space $0 \in W$ such that $0$ is a fixed point of $F{}{}_0(\cdot) \isdef F{}{}(0,\cdot)$. Denote $F{}{}(\lambda,x) = Ax + C\lambda + g_\lambda{}{}$ with $A \isdef D_xF{}{}(0,0)$ and $C \isdef D_\lambda F{}{}(0,0)$. Let $k_c : W \times X_c \to X_c$ be chosen such that $k_{c}(0,0) = 0$ and $D_\lambda k_c(0,0) = 0$. Assume that $F_0$ satisfies the conditions of \cref{MainTheorem} where we choose $k_{c,0}(\cdot) \isdef k_c(0,\cdot) : X_c \to X_c$, with condition \cref{MainTheoremNorm } replaced by:
\begin{itemize}
\item[\cref{MainTheoremNorm }a.] The norm on $X$ is such that
\begin{align*}
\max\{1,\operatornorm{A_c^{-1} }\}^{n} \operatornorm{ A_s } < 1 && \text{ and } && \operatornorm{A_u^{-1} } \max\{1,\operatornorm{A_c}\}^{n} < 1.
\end{align*}
\end{itemize}
Furthermore, we assume that
\begin{itemize}
\item[5.] Let $\tilde{W} \subset W$ and $\varepsilon > 0$ be such that $\tilde{W}^\varepsilon \subset W$ and 
\begin{align*}
g &\in \left\{ h \in C_b^n(\tilde{W}^\varepsilon \times X, X) \  \middle| \ \sup_{\lambda \in \tilde{W}^\varepsilon} \supnorm{D_x h(\lambda, \cdot)} < L_g \right\}, \\
k_c &\in \left\{ h \in C_b^n(\tilde{W}^\varepsilon \times X, X) \  \middle| \  \sup_{\lambda \in \tilde{W}^\varepsilon} \supnorm{D_x h(\lambda, \cdot)} < L_c \right\},
\end{align*}
for the $L_g$ and $L_c$ from \cref{MainTheorem} for $F_0$.
\end{itemize}
Then there exist a jointly $C^{n}$ conjugacy $K{}{} : \tilde{W} \times X_c \to X$ and jointly $C^{n}$ discrete dynamical system $R_\lambda = A_c + r_\lambda : \tilde{W} \times X_c \to X_c$ such that
\begin{align}
(A + g_\lambda{}{}) \circ K_\lambda{}{} = K_\lambda{}{} \circ (A_c +r_\lambda) \label{ConjugacyEquationParameter}.
\end{align}
for all $\lambda \in \tilde{W}$. Furthermore, $A_c + r_\lambda$ is globally invertible and $K_\lambda{}{} =  \iota +  \tmatrix{k_{c,\lambda}}{k_{u,\lambda}}{k_{s,\lambda}}$ with $\iota: X_c \to X$ the inclusion map for all $\lambda \in \tilde{W}$ .
\end{theorem}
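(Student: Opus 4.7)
The plan is to convert the parameter-dependent problem into a parameter-free one on an enlarged phase space and then invoke \cref{MainTheorem}. I would adjoin the parameter as a frozen coordinate by defining $\tilde F : \mathbb{R}^m \times X \to \mathbb{R}^m \times X$ via
\begin{align*}
\tilde F(\lambda,x) \isdef (\lambda, F(\lambda,x)) = (\lambda, Ax + C\lambda + g_\lambda(x)),
\end{align*}
so that $(0,0)$ is a fixed point and $D\tilde F(0,0)$ is block lower triangular with diagonal blocks $\operatorname{Id}_{\mathbb{R}^m}$, $A_c$, $A_u$, $A_s$. Splitting $C = C_c + C_u + C_s$ along $X_c \oplus X_u \oplus X_s$ and noting that $\operatorname{Id} - A_u$ and $\operatorname{Id} - A_s$ are invertible by condition 2, the change of coordinates $y_u \isdef x_u - (\operatorname{Id}-A_u)^{-1} C_u \lambda$ and $y_s \isdef x_s - (\operatorname{Id}-A_s)^{-1} C_s \lambda$ decouples the hyperbolic directions from $\lambda$, producing a spectral splitting $\tilde X_c \oplus X_u \oplus X_s$ with $\tilde X_c \isdef \mathbb{R}^m \oplus X_c$ and restricted center operator
\begin{align*}
\tilde A_c = \begin{pmatrix} \operatorname{Id} & 0 \\ C_c & A_c \end{pmatrix}, \qquad \tilde A_c^{-1} = \begin{pmatrix} \operatorname{Id} & 0 \\ -A_c^{-1}C_c & A_c^{-1} \end{pmatrix}.
\end{align*}

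I would then equip $\tilde X_c$ with the weighted max norm $\norm{(\lambda,x_c)}_{\tilde c} \isdef \max\{\delta\norm{\lambda}_{\mathbb{R}^m}, \norm{x_c}_c\}$ for $\delta > 0$ to be chosen, and combine it block-wise with the given norms on $X_u$ and $X_s$, so condition 1a of \cref{MainTheorem} is automatic. Because a unit vector in the $\lambda$-direction has $\norm{\lambda}_{\mathbb{R}^m} \le 1/\delta$, taking $\delta$ sufficiently large absorbs the $C_c$ block and yields $\operatornorm{\tilde A_c^{\pm 1}} \le \max\{1, \operatornorm{A_c^{\pm 1}}\}$, so the modified condition 3a translates precisely into condition 3 of \cref{MainTheorem} for $\tilde F$. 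For condition 4, I would introduce a smooth cut-off $\xi : \mathbb{R}^m \to [0,1]$ equal to $1$ on $\tilde W$ and supported in $\tilde W^\varepsilon$, and replace $g$ and $k_c$ by $\xi(\lambda)g(\lambda,x)$ and $\xi(\lambda) k_c(\lambda, x_c)$. For the extended nonlinearity $\tilde g(\lambda,x) \isdef (0, \xi(\lambda) g(\lambda,x))$, a unit vector $(\mu, v) \in \tilde X_c \oplus X_u \oplus X_s$ satisfies $\norm{\mu} \le 1/\delta$, so
\begin{align*}
\operatornorm{D\tilde g} \le \tfrac{1}{\delta}\sup_{\tilde W^\varepsilon}\operatornorm{D_\lambda (\xi g)} + \sup_{\tilde W^\varepsilon}\supnorm{D_x(\xi g)}.
\end{align*}
Since assumption 5 gives the second term strictly less than $L_g$, a further increase of $\delta$ brings the total below $L_g$; the same argument applies to $\xi k_c$.

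With all hypotheses verified, applying \cref{MainTheorem} to $\tilde F$ delivers a jointly $C^n$ conjugacy $\tilde K : \tilde X_c \to \mathbb{R}^m \oplus X$ and a $C^n$ globally invertible map $\tilde R = \tilde A_c + \tilde r : \tilde X_c \to \tilde X_c$ with $\tilde F \circ \tilde K = \tilde K \circ \tilde R$ and $\tilde K = \iota + \tmatrix{\tilde k_c}{\tilde k_u}{\tilde k_s}$, for the prescribed choice $\tilde k_c(\lambda, x_c) = (0, \xi(\lambda) k_c(\lambda, x_c))$; the assumptions $k_c(0,0)=0$, $D_\lambda k_c(0,0)=0$ together with the inherited MainTheorem hypothesis $D_x k_{c,0}(0) = 0$ guarantee that $\tilde k_c$ vanishes to first order at the origin. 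Since $\tilde F$ is the identity in the $\lambda$-coordinate and $\tilde K$ is the identity there as well, the conjugacy equation forces the $\lambda$-component of $\tilde R$ to equal $\lambda$; hence $\tilde R$ respects the $\lambda$-fibration. Slicing $K_\lambda \isdef \tilde K(\lambda, \cdot)$ and $R_\lambda \isdef \tilde R(\lambda, \cdot)$ for $\lambda \in \tilde W$ (where $\xi \equiv 1$) and undoing the linear shift on $X_u$ and $X_s$ yields the sought jointly $C^n$ families, and \eqref{ConjugacyEquationParameter} follows by restriction to each fibre.

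The main technical hurdle I anticipate is the simultaneous choice of the weight $\delta$: it has to be large enough both to absorb the off-diagonal block $C_c$ in $\tilde A_c^{\pm 1}$ (so condition 3a becomes condition 3 for $\tilde F$) and to make the $\lambda$-partials of $\xi g$ and $\xi k_c$ negligible (so condition 4 holds globally), while leaving the block-max structure of condition 1a intact. This is precisely the \emph{right norm on the extended Banach space} flagged in the introduction as the central challenge of this generalization.
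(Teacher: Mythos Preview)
Your proposal is correct and follows essentially the same route as the paper: adjoin $\lambda$ as a frozen variable, decouple it from the hyperbolic directions via the shift by $(\operatorname{Id}-A_{u/s})^{-1}C_{u/s}\lambda$ (which the paper phrases as defining a skewed center subspace $Y_c$ rather than an explicit change of variables), scale the $\lambda$-norm by a large weight to absorb both the off-diagonal block $C_c$ and the $\lambda$-partials of the cut-off nonlinearity, apply \cref{MainTheorem}, and then read off the fibrewise conjugacy from the fact that $\tilde R$ fixes $\lambda$. The only imprecision is the claimed equality $\operatornorm{\tilde A_c^{\pm 1}} \le \max\{1,\operatornorm{A_c^{\pm 1}}\}$, which should carry an $O(1/\delta)$ correction; this is harmless since condition~3a is strict, and the paper handles it the same way by exploiting the $\varepsilon$-slack in the inequalities.
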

\begin{remark}\label{RemarkBoundednessNorm}
All norms on $\mathbb{R}^m$ are equivalent, and thus if $F: W \times X \to X$ is smooth with respect to any norm on $\mathbb{R}^m$, it is smooth with respect to all norms on $\mathbb{R}^m$. Similarly, if  $F : W \times X \to X$ is bounded in $C^n$ with respect to any norm on $\mathbb{R}^m$, it is bounded with respect to all norms. However, the $C^n$ norm of $F$ does depend on the norm of $\mathbb{R}^m$.
\end{remark}

\begin{remark}\label{SpectrumParameters}
From condition 3a it follows that neither $A_u$ nor $A_s$ contains $1$ in their spectrum. That means that $\operatorname{Id} - A_u$ and $\operatorname{Id} - A_s$ are both invertible.  In particular, we have for $(x_c,x_u,x_s) \in X$
\begin{align*}
\begin{pmatrix} x_c \\ x_u \\ x_s \end{pmatrix} + A \begin{pmatrix} 0 \\ ( \operatorname{Id} - A_u )^{-1} x_u  \\ ( \operatorname{Id} - A_s )^{-1} x_s \end{pmatrix} &= \begin{pmatrix} 0 \\ ( \operatorname{Id} - A_u )^{-1} x_u  \\ ( \operatorname{Id} - A_s )^{-1} x_s \end{pmatrix}  + 
\begin{pmatrix} x_c \\0 \\ 0 \end{pmatrix}.
\end{align*}
That is, for every $z \in X$, we can find  $y \in X_c$ and $x \in X_u \oplus X_s$ such that $z + Ax = x+ y$.
\end{remark}

\begin{proof}
The  idea behind the proof is to create a dynamical system $\tilde{F}{}{}: W \times X \to W \times X$ by considering the parameters as extra variables, and apply \cref{MainTheorem} to this new dynamical system. However, if $W \subsetneq \mathbb{R}^m$, \cref{MainTheorem} cannot be applied as $W \times X$ is not a Banach space. So we first have to extend $\tilde{F}{}{}$ to a dynamical system on $\mathbb{R}^m \times X$. 

Let $\varepsilon > 0$ and $\tilde{W}$ be such that condition $5$ is satisfied.  We consider a smooth cut-off function $\xi : \mathbb{R}^m \to [0,1]$ such that $\xi \equiv 1$ on $\tilde{W}$, and its support lies in $\tilde{W}^\varepsilon$. Consider the dynamical system
\begin{align}
\tilde{F}{}{} : Y \isdef \mathbb{R}^m \times X \to Y, (\lambda, x ) \mapsto \begin{pmatrix} \operatorname{Id} & 0 \\ C & A \end{pmatrix} \begin{pmatrix} \lambda \\ x \end{pmatrix} + \begin{pmatrix} 0 \\  g_\lambda(x) \xi(\lambda) \end{pmatrix}, \label{NormParameterSystemEq}
\end{align}
with the convention that $g_\lambda(x) \xi(\lambda) = 0$ outside $W$. 
In particular we have that $\tilde{F}{}{} \equiv (\operatorname{Id} , F_\lambda{}{} )$ on $\tilde{W} \times X$.

To make $Y$ a Banach space, we have to define a norm on $Y$. Denote the norm on $\mathbb{R}^m$ with $\norm{\cdot}_{\mathbb{R}^m}$, which we can choose freely, and  the norm on $X$ with  $\norm{\cdot}_X$. We equip $Y$ with the supremum norm $\norm{(\lambda,x)}_{\tilde{Y}} \isdef \max\{ \norm{\lambda}_{\mathbb{R}^m} , \norm{x}_X\}$.  We now want to apply \cref{MainTheorem} to $\tilde{F}{}{}$. Hence we want to use the norm $\norm{\cdot}_{\tilde{Y}}$ to check the smoothness of $\tilde{F}{}{}$ and find the invariant subspaces $Y_c$, $Y_u$ and $Y_s$ as in assumption \cref{MainTheoremSubpaces } of \cref{MainTheorem}. We then define an equivalent norm $\norm{\cdot}_{Y}$ on $Y$ such that condition 1a of \cref{MainTheorem} is satisfied, see \cref{RightNormEquation}.
\ \\

\textit{\underline{Smoothness and linearization:}} We first have to show that $\tilde{F}{}{}$ is $C^n$. The linear part of $\tilde{F}{}{}$ is smooth, and the non-linear part is $C^n$, since $g_\lambda :W \times X \to X$ is jointly $C^n$ by assumption and $\xi$ is smooth by construction. Thus their product is also $C^n$. Furthermore, $(0,0)$ is a fixed point of $\tilde{F}{}{}$ and the derivative of $\tilde{F}{}{}$ at $(0,0)$ is given by
\begin{align}
\tilde{A} \isdef D\tilde{F}{}{}(0,0) = \begin{pmatrix} \operatorname{Id} & 0 \\ C & A\end{pmatrix},
\end{align}
since $g(0,0) = 0$, $D_\lambda g(0,0) = 0$ and $D_x g(0,0) = 0$. In particular we see that our non-linearity is indeed $\tilde{g}(\lambda,x) \isdef (0, g_\lambda(x)  \xi(\lambda))$.
\ \\

\textit{\underline{Invariant subspaces:}} For the first condition of \cref{MainTheorem} we  need to define closed $\tilde{A}$-invariant subspaces $Y_c$, $Y_u$ and $Y_s$.  Let $(e_i)_{i=1}^m \subset \mathbb{R}^m$ be a basis of $\mathbb{R}^m$, and denote $C_i = C e_i$. By \cref{SpectrumParameters}, we can find  elements $(x_i)_{i=1}^m \subset X_u \oplus X_s$ and $(y_i)_{i=1}^m \subset X_c$ such that $C_i + Ax_i  = x_i + y_i$. We  define the closed subspaces
\begin{align}
Y_c &\isdef \operatorname{span}\left( \left\{ (0,x) \in Y \mid x \in X_c \right\}, \left\{ (e_i,x_i) \mid 1 \le i \le m \right\} \right), \label{EquationParameterSubspaceCenter} \\
Y_u &\isdef \left\{ (0,x) \in Y \mid x \in X_u \right\}, \\
Y_s &\isdef \left\{ (0,x) \in Y \mid x \in X_s \right\}.
\end{align}
Since $X_c \oplus X_u \oplus X_s = X$, we have $Y_c \oplus Y_u \oplus Y_s = \mathbb{R}^m \times X = Y$. Furthermore, by construction we have that $Y_u$ and $Y_s$ are invariant under $\tilde{A}$, and for an element $y = (0,x) + \sum_{i=1}^m \mu_i (e_i,x_i) \in Y_c$, with $x \in X_c$, we have
\begin{align*}
\tilde{A} y = \begin{pmatrix} \operatorname{Id} & 0 \\ C & A\end{pmatrix} \begin{pmatrix} 0 \\ x \end{pmatrix} +  \sum_{i=1}^m \mu_i \begin{pmatrix} \operatorname{Id} & 0 \\ C & A\end{pmatrix} \begin{pmatrix} e_i \\ x_i \end{pmatrix} = \begin{pmatrix} 0 \\ Ax \end{pmatrix} + \sum_{i=1}^m \mu_i \begin{pmatrix} e_i \\ C_i + Ax_i \end{pmatrix}.
\end{align*}
By definition, $X_c$ is invariant under $A$, thus $(0,Ax) \in Y_c$. By construction, we have $(e_i, C_i + Ax_i) = (e_i, x_i + y_i) = (e_i,x_i) + (0,y_i)$. As $y_i \in X_c$, we have $(e_i,x_i) + (0,y_i) \in Y_c$. Now that we have found our invariant subspaces, we can define a norm $\norm{\cdot}_Y$ on $Y$ which is equivalent with $\norm{\cdot}_{\tilde{Y}}$ and satisfies condition 1a of \cref{MainTheorem}:
\begin{align}
\norm{(\lambda,x+\functional{\lambda})}_Y \isdef \max\{ \norm{\lambda}_{\mathbb{R}^m}, \norm{x}_X \}, \label{RightNormEquation}
\end{align}
where $\functional{\lambda} = \sum_{i=1}^m \lambda_i x_i$ when $\lambda = \sum_{i=1}^m \lambda_i e_i$. In particular, we have that $\tilde{A}_u = A_u$ and $\tilde{A}_s = A_s$, and their operator norms are equal given the norm $\norm{\cdot}_{Y}$ on $Y$.
\ \\

\textit{\underline{Invertibility:}} For the second condition of \cref{MainTheorem}, we have to check that $\tilde{A}$ restricted to $Y_c$ or $Y_u$ is invertible. As we already noted, $\tilde{A}_u = A_u$, which is invertible. One can check that the inverse of $\tilde{A}_c$ is defined by
\begin{align*}
\begin{cases}
\tilde{A}_c^{-1} \begin{pmatrix} 0 \\ x \end{pmatrix} = \begin{pmatrix} 0 \\ A_c^{-1} x \end{pmatrix} & \text{ for } x \in X_c,\\
\tilde{A}_c^{-1} \begin{pmatrix} e_i \\ x_i \end{pmatrix} = \begin{pmatrix} e_i \\ x_i \end{pmatrix} - \begin{pmatrix} 0 \\ A_c^{-1} y_i \end{pmatrix} & 1 \le i \le m.
\end{cases}
\end{align*}
Hence we have to show that $\tilde{A}_c^{-1}$ defines a bounded linear map. We  want to reiterate that we can choose the norm on $\mathbb{R}^m$ since all norms on $\mathbb{R}^m$ are equivalent. Let $M > 0$, and define 
\begin{align}
\left\| \sum_{i=1}^m \lambda_i e_i \right\|_{\mathbb{R}^m} \isdef M \sum_{i=1}^m | \lambda_i| . \label{NormOnRn}
\end{align}
We will not give an explicit value for $M$, but we will argue during the proof that the conditions of \cref{MainTheorem} are satisfied for $M$ large enough. The norm of $y = (0,x) + \sum_{i=1}^m \lambda_i (e_i,x_i)$ is given by
\begin{align*}
\norm{y}_Y = \max\left\{ M \sum_{i=1}^m |\lambda_i |, \left\| x \right\|_X  \right\}.
\end{align*}
Let $y \in Y_c$ be inside the unit ball, i.e.\ $M \sum_{i=1}^m |\lambda_i | \le 1$ and  $\left\| x \right\|_X \le 1$.
Then we estimate
\begin{align}
\norm{\tilde{A}_c^{-1} y}_Y &= \left\|  \begin{pmatrix} \sum_{i=1}^m \lambda_i e_i \\ A_c^{-1} x + \sum_{i = 1}^m \lambda_i x_i -  \sum_{i = 1}^m \lambda_i A_c^{-1} y_i \end{pmatrix} \right\|_Y \nonumber \\
		&= \max\left\{  M \sum_{i=1}^m  |\lambda_i| , \left\| A_c^{-1} x - \sum_{i=1}^m \lambda_i  A_c^{-1} y_i  \right\|_X \right\} \nonumber \\
		&\le \max\left\{ 1 , \operatornorm{A_c^{-1}} \norm{x}_X + \sum_{i \in I }^m |\lambda_i| \operatornorm{A_c^{-1}} \left\| y_i \right\|_X   \right\} \nonumber \\
		&\le \max\left\{ 1 , \operatornorm{A_c^{-1}} + M^{-1} \operatornorm{A_c^{-1}} \mathcal{C}_y  \right\} . \label{NormParameterInverseEq1}
\end{align}
Here we define $\mathcal{C}_y \isdef  \max_{1 \le i \le m} \left\{ \left\| y_i \right\|_X \right\} $ in the last inequality, and we use that $\sum_{i=1}^m |\lambda_i | \le 1 / M$. Thus $\tilde{A}_c^{-1}$ is a bounded linear operator, and hence $\tilde{A}_c$ is invertible, which shows that the second condition of \cref{MainTheorem} is satisfied.
\ \\

\textit{\underline{Bounds on the linearization:}} For the third condition of \cref{MainTheorem}, we have to estimate the operator norm of $\tilde{A}_c$, as we have already computed a bound for the operator norm of $\tilde{A}_c^{-1}$. For an element $y \in Y_c$ in the unit ball, we have
\begin{align}
\norm{\tilde{A}_c y}_Y &= \left\| \begin{pmatrix} \sum_{i=1}^m \lambda_i e_i \\ A_c x + \sum_{i=1}^m  \lambda_i x_i + \sum_{i=1}^m  \lambda_i y_i \end{pmatrix}\right\|_Y \nonumber \\
			&= \max \left\{ M \sum_{i=1}^m | \lambda_i| , \left\| A_c x + \sum_{i=1}^m  \lambda_i   y_i  \right\|_X \right\} \nonumber \\
			&\le \max \left\{ 1 , \operatornorm{A_c} \norm{x}_X + \sum_{i=1}^m  |\lambda_i| \norm{ y_i}_X \right\} \nonumber \\
			&\le \max \left\{ 1 , \operatornorm{A_c}  + M^{-1} \mathcal{C}_y \right\}. \label{NormParameterNormsEq1}
\end{align}
Since we estimate both $\operatornorm{\tilde{A}_c}$ and $\operatornorm{\tilde{A}_c^{-1}}$ by at least $1$, it is enough to check the third assumption for $\tilde{n} = n$. From assumption 3a we have
\begin{align*}
\max\{1,\operatornorm{A_c^{-1} }\}^{n} \operatornorm{ A_s } < 1 && \text{ and } && \operatornorm{A_u^{-1} } \max\{1,\operatornorm{A_c}\}^{n} < 1.
\end{align*}
Therefore, there exists an $\varepsilon > 0$ such that also
\begin{align}
\max\{1,\operatornorm{A_c^{-1}} + \varepsilon\}^{n} \operatornorm{ A_s } < 1 && \text{ and } && \operatornorm{A_u^{-1} } \max\{1,\operatornorm{A_c} + \varepsilon \}^{n} < 1. \label{NormParameterNormsEq3}
\end{align}
In particular, we take $M$ big enough, such that from \cref{NormParameterNormsEq1,NormParameterInverseEq1} we obtain
\begin{align}
\operatornorm{\tilde{A}_c^{-1}} \le \max\left\{ 1 , \operatornorm{A_c} + \varepsilon\right\} && \text{ and } && \operatornorm{\tilde{A}_c} \le \max\left\{ 1 , \operatornorm{A_c} + \varepsilon \right\}. \label{NormParameterNormsEq4}
\end{align}
Together with the fact that $\operatornorm{\tilde{A}_u^{-1}} = \operatornorm{A_u^{-1}}$ and $\operatornorm{\tilde{A}_s} = \operatornorm{A_s}$, we obtain from \cref{NormParameterNormsEq3,NormParameterNormsEq4}
\begin{align*}
\operatornorm{\tilde{A}_c^{-1} }^{\tilde{n}} \operatornorm{ \tilde{A}_s } < 1 && \text{ and } && \operatornorm{\tilde{A}_u^{-1} } \operatornorm{\tilde{A}_c}^{\tilde{n}} < 1 && \text{ for all }  1 \le \tilde{n} \le n,
\end{align*}
where we used that $\operatornorm{\tilde{A}_c^{-1}}^{\tilde{n}} \le \operatornorm{\tilde{A}_c^{-1}}^n$ and $\operatornorm{\tilde{A}_c}^{\tilde{n}} \le \operatornorm{\tilde{A}_c}^n$ for all $1 \le \tilde{n} \le n$. Thus also the third assumption of \cref{MainTheorem} is satisfied.
\ \\

\textit{\underline{Bounds on the non-linearities:}} Finally, we have to check that fourth condition of \cref{MainTheorem} is satisfied for $\tilde{F}{}{}$. Recall that the non-linearity of $\tilde{F}{}{}$ is given by 
\begin{align*}
\tilde{g}(\lambda,x) = (0,g_\lambda(x) \xi(\lambda)),
\end{align*}
where $\xi: \mathbb{R}^m \to [0,1]$ is a cut-off function with its support on $\tilde{W}^\varepsilon$. Furthermore, we choose 
\begin{align*}
\tilde{k}_c : Y_c \to Y_c, (0,x) + \sum_{i=1}^m \lambda_i (e_i,x_i) \mapsto (0 , k_{c,\lambda}(x)\xi(\lambda)),
\end{align*}
 with $\xi$ the same cut-off function and $\lambda = \sum_{i=1}^m \lambda_i e_i$. We will only check that $\supnorm{D \tilde{g}} < L_g$, as checking $\supnorm{D\tilde{k}_c} < L_c$ will go analogously. We have
\begin{align}
\supnorm{D \tilde{g} } = \sup_{\substack{\lambda \in \mathbb{R}^m \\ x \in X }} \operatornorm{D\tilde{g}(\lambda,x)} =  \sup_{\substack{\lambda \in \mathbb{R}^m \\ x \in X }} \left\| \begin{pmatrix} 0 & 0 \\  B(\lambda,x) & D_x g_\lambda(x) \xi(\lambda) \end{pmatrix}  \right\|_\text{op}, \label{NormParameterNonLinearEq2}
\end{align}
where we introduce 
\begin{align}
B(\lambda,x) \isdef D_\lambda \tilde{g} (\lambda,x) =  \xi(\lambda) D_\lambda g(\lambda,x) + g_\lambda(x) \frac{\partial \xi}{\partial \lambda}(\lambda). \label{NormParameterNonLinearEq1}
\end{align}
Recall that $\xi \in [0,1]$ has support on $W^\varepsilon$ and we defined $\functional{\mu} = \sum_{i=1}^m \mu_i x_i$ for $\mu = \sum_{i=1}^m \mu_i e_i$. We estimate \cref{NormParameterNonLinearEq2} by
\begin{align}
\supnorm{D \tilde{g} } &=  \sup_{\substack{\lambda \in \mathbb{R}^m \\ x \in X }} \sup_{\substack{\norm{\mu}_{\mathbb{R}^m} \le 1 \\ \norm{y-\functional{\mu}}_X \le 1}} \left\| \begin{pmatrix} 0 & 0 \\  B(\lambda,x) & D_x g_\lambda(x) \xi(\lambda) \end{pmatrix} \begin{pmatrix} \mu \\ y \end{pmatrix} \right\|_Y \nonumber \\
		&\le \sup_{\substack{\lambda \in \tilde{W}^\varepsilon \\ x \in X }} \sup_{\norm{\mu}_{\mathbb{R}^m} \le 1 } \left\|   B(\lambda,x) \mu \right\|_X +  \sup_{\substack{\lambda \in \tilde{W}^\varepsilon \\ x \in X }} \sup_{\substack{\norm{\mu}_{\mathbb{R}^m} \le 1 \\ \norm{y-\functional{\mu}}_X \le 1}} \left\|D_x g_\lambda(x) \xi(\lambda) y   \right\|_X \nonumber \\
		&\le \sup_{\substack{\lambda \in \tilde{W}^\varepsilon \\ x \in X }} \sup_{\norm{\mu}_{\mathbb{R}^m} \le 1 } \left\|   B(\lambda,x) \mu \right\|_X +  \sup_{\lambda \in \tilde{W}^\varepsilon } \supnorm{D_x g_\lambda} \left( 1 + M^{-1} \mathcal{C}_x \right), \label{NormParameterNonLinearEq3} 
\end{align}
where we used the triangle inequality in the last line to estimate 
\begin{align*}
\norm{y}_X \le \norm{y - y(\mu)}_X + \norm{y(\mu)}_X \le 1 + \sum_{i=1}^m |\mu_i | \norm{x_i}_X \le 1 + M^{-1} \mathcal{C}_x.
\end{align*}
for all $\mu \in \mathbb{R}^m$ and $y \in X$ such that $\norm{ y - \functional{\mu}}_X \le 1$ and $\norm{\mu}_{\mathbb{R}^m} \le 1$. Here we introduce the constant $\mathcal{C}_x \isdef \max_{1\le i\le m} \{ \norm{x_i}_X \}$. From assumption $5$ we have that $\sup_{\lambda \in \tilde{W}^\varepsilon} \supnorm{D_x g_\lambda} < L_g$. Thus we must show that we can make $\operatornorm{B(\lambda,x)}$ arbitrary small uniformly on $\tilde{W}^\varepsilon \times X$. From the definition of $B$ in \cref{NormParameterNonLinearEq1} we obtain
\begin{align}
 \sup_{\substack{\lambda \in \tilde{W}^\varepsilon \\ x \in X }} & \sup_{\norm{\mu}_{\mathbb{R}^m} \le 1 } \left\|   B(\lambda,x) \mu \right\|_X  \nonumber \\
 			&\le  \sup_{\substack{\lambda \in \tilde{W}^\varepsilon \\ x \in X }} \sup_{\norm{\mu}_{\mathbb{R}^m} \le 1 }  \left\|  \xi(\lambda) D_\lambda g(\lambda,x) \mu \right\|_X  + \left\| g_\lambda(x) \frac{\partial \xi}{\partial \lambda}(\lambda) \mu \right\|_X  \nonumber \\
 			&\le  \sup_{\substack{\lambda \in \tilde{W}^\varepsilon \\ x \in X}} \left\| D_\lambda g(\lambda,x) \right\|_{\text{op}} + \sup_{\substack{\lambda \in \tilde{W}^\varepsilon \\ x \in X}} \norm{g(\lambda,x) } \left\|  \frac{ \partial \xi}{\partial \lambda} \right\|_{\text{op}}.
 \label{NormParameterNonLinearEq5}
\end{align} 
Here we used that $\xi (\lambda) \in [0,1]$ for $\lambda \in \tilde{W}^\varepsilon$. From condition $5$ it follows that both $D_\lambda g$ and $g$ are uniformly bounded on $\tilde{W}^\varepsilon \times X$. We need to show that we can take the norm $\norm{\cdot}_{\mathbb{R}^m}$ such that both terms of \cref{NormParameterNonLinearEq5} are sufficiently small. As we mentioned in \cref{RemarkBoundednessNorm}, the operator norm of $D_\lambda g$ depends on the norm on $\mathbb{R}^m$. Recall that our norm is defined as $\norm{\sum_{i=1} \mu_i e_i}_{\mathbb{R}^m} = M \sum_{i=1}^m |\mu_i|$, then there exists a constant $\mathcal{C}$ independent of $M$ such that
\begin{align*}
\operatornorm{D_\lambda g(\lambda,x)} = \frac{\mathcal{C}}{M}.
\end{align*}
Likewise, we have that the operator norm of  $\frac{\partial \xi}{\partial \lambda}$ scales as $1/M$. Furthermore, we remark that the supremum norm of $g$ does not depend on the norm on $\mathbb{R}^m$. Hence we can make \cref{NormParameterNonLinearEq5} as small as desired if we take $M$ sufficiently large. Therefore, we have a constant $\delta(M) \downarrow 0$ as $M \to \infty$ such that 
\begin{align*}
 \sup_{\substack{\lambda \in \tilde{W}^\varepsilon \\ x \in X }} \sup_{\norm{\mu}_{\mathbb{R}^m} \le 1 } \left\|   B(\lambda,x) \mu \right\|_X \le \delta(M).
\end{align*}
Simultaneously we have that  $1 + M^{-1} \mathcal{C}_x \downarrow 1 $ as $M \to \infty$. Together with the strict upper bound $\sup_{\lambda \in \tilde{W}^{\varepsilon}} \operatornorm{D_x g_\lambda} < L_g$, we see that we can choose $M$ large enough such that \cref{NormParameterNonLinearEq3} becomes
\begin{align*}
\supnorm{D\tilde{g}}  \le \sup_{\substack{\lambda \in \tilde{W}^\varepsilon \\ x \in X }} \sup_{\norm{\mu}_{\mathbb{R}^m} \le 1 } \left\|   B(\lambda,x) \mu \right\|_X + \sup_{\lambda \in \tilde{W}^{\varepsilon}} \operatornorm{D_x g_\lambda} (1 + M^{-1} \mathcal{C}_x )< L_g.
\end{align*}
 In a similar fashion, we can choose $M$ large enough such that $\supnorm{D\tilde{k}_c } < L_c$. In particular, we see that assumption 4 of \cref{MainTheorem} is satisfied for $\tilde{F}{}{}$.
\ \\
\textbf{Verifying the conjugacy equation:} We can apply \cref{MainTheorem} to the dynamical system $\tilde{F}{}{}$. Thus we find $\tilde{K} : Y_c \to Y$ and $\tilde{R} : Y_c \to Y_c$ such that
\begin{align}
\tilde{F}{}{} \circ \tilde{K} = \tilde{K} \circ \tilde{R}, \label{NormParametersEquivalentEq3}
\end{align}
and $\tilde{K} = \iota + \tmatrix{\tilde{k}_c}{\tilde{k}_u}{\tilde{k}_s}$ and $\tilde{R} = \tilde{A}_c + \tilde{r}$. However, we claimed that for $\lambda \in \tilde{W}$ we have
\begin{align*}
( A + g_\lambda) \circ K_\lambda = K_\lambda \circ (A_c + r_\lambda)
\end{align*}
which we still have to prove. So let $\lambda = \sum_{i=1}^m \lambda_i e_i \in \tilde{W}$ and $x \in X_c$. Recall that $\functional{\lambda} = \sum_{i=1}^m \lambda_i x_i$ lies in $X_u \oplus X_s$, thus we have $\functionalsub{u}{\lambda} \in X_u$ and $\functionalsub{s}{\lambda} \in X_s$ such that $\functional{\lambda}  = (\functionalsub{u}{\lambda} , \functionalsub{s}{\lambda} )$. Then we have
\begin{align*}
\tilde{F}{}{} (\tilde{K} (\lambda,x + \functional{\lambda}) )&= \tilde{F}{}{}   \begin{pmatrix}  \lambda \\ x +  k_{c,\lambda}(x) \\  \functionalsub{u}{\lambda} + \tilde{k}_u(\lambda,x + \functional{\lambda} )\\ \functionalsub{s}{\lambda} + \tilde{k}_s(\lambda,x + \functional{\lambda} ) \end{pmatrix}.
\end{align*}
Here we used that $\tilde{k}_c(\lambda,x + \functional{\lambda} ) = (0, k_{c,\lambda}(x))$.  We recall the definition of $\tilde{F}(\lambda, x) =  ( \lambda, F_\lambda(x))$, and we define $k_{u,\lambda}(x) \isdef \functionalsub{u}{\lambda} + \tilde{k}_u(\lambda, x +\functional{\lambda} )$ and $k_{s,\lambda}(x) \isdef \functionalsub{s}{\lambda} + \tilde{k}_s(\lambda,x +  \functional{\lambda}  )$. Then we have
\begin{align} 
\tilde{F}{}{} (\tilde{K} (\lambda,x + \functional{\lambda}) )& = \begin{pmatrix} \lambda \\ F_\lambda{}{} \begin{pmatrix}  x + k_{c,\lambda}(x) \\ k_{u,\lambda}(x) \\ k_{s,\lambda}(x) \end{pmatrix} \end{pmatrix}. \label{NormParametersEquivalentEq1}
\end{align}
On the other hand, we write $y(\lambda) = \sum_{i=1}^m \lambda_i y_i \in X_c$ and obtain
\begin{align}
\tilde{R}(\lambda,x + x(\lambda)) &=  \begin{pmatrix} \lambda + \tilde{r}_{\mathbb{R}^m}(\lambda,x + \functional{\lambda})  \\ A_c x + \functional{\lambda} + y(\lambda) + \tilde{r}_{X_c}(\lambda, x + \functional{\lambda}) \end{pmatrix} \in Y_c . \label{NormParametersEquivalentEq4}
\end{align}
From the $\mathbb{R}^m$ component of \cref{NormParametersEquivalentEq3,NormParametersEquivalentEq1,NormParametersEquivalentEq4} we obtain 
\begin{align*}
\lambda = \lambda + \tilde{r}_{\mathbb{R}^m}(\lambda,x + \functional{\lambda}),
\end{align*}
thus $\tilde{r}_{\mathbb{R}^m}(\lambda,x + \functional{\lambda}) = 0$ and $R_\lambda(x) \isdef A_c x +  y(\lambda) + \tilde{r}_{X_c}(\lambda,x + \functional{\lambda}) \in X_c$. Then \cref{NormParametersEquivalentEq3} becomes
\begin{align}
 \begin{pmatrix} \lambda \\ F_\lambda{}{} \begin{pmatrix}  x + k_{c,\lambda}(x) \\ k_{u,\lambda}(x) \\ k_{s,\lambda}(x) \end{pmatrix} \end{pmatrix} &=  \tilde{F}{}{} (\tilde{K} (\lambda,x + \functional{\lambda})) \nonumber\\
 							&= \tilde{K}(\tilde{R}(\lambda,x + \functional{\lambda})) \nonumber \\
 							&= \tilde{K} \begin{pmatrix} \lambda \\ R_\lambda(x) + \functional{\lambda} \end{pmatrix} \nonumber \\
 							&=  \begin{pmatrix}  \lambda \\ R_{\lambda}(x) +  k_{c,\lambda}(R_\lambda(x)) \\   \functionalsub{u}{\lambda}+ \tilde{k}_u(\lambda,R_\lambda(x) +\functional{\lambda})\\ \functionalsub{s}{\lambda} + \tilde{k}_s(\lambda,R_\lambda(x)+\functional{\lambda}) \end{pmatrix}. \label{NormParametersEquivalentEq2}
\end{align}
With the definition of $k_{u,\lambda}$ and $k_{s,\lambda}$, \cref{NormParametersEquivalentEq2} restricted to $X$ becomes 
\begin{align*}
  F_\lambda{}{} \begin{pmatrix}  x + k_{c,\lambda}(x) \\ k_{u,\lambda}(x) \\ k_{s,\lambda}(x) \end{pmatrix} &=  \begin{pmatrix}   R_{\lambda}(x) +  k_{c,\lambda}(R_\lambda(x)) \\  k_{u,\lambda}(R_\lambda(x))\\ k_{s,\lambda}(R_\lambda(x)) \end{pmatrix} = \left(\iota + \begin{pmatrix} k_{c,\lambda} \\ k_{u,\lambda} \\ k_{s,\lambda} \end{pmatrix} \right) \circ R_\lambda(x).
\end{align*}
Thus \cref{ConjugacyEquationParameter} holds for all $\lambda \in \tilde{W}$. Furthermore, the smoothness of $K_\lambda$ and $R_\lambda$ follows from the smoothness of $\tilde{K}$ and $\tilde{R}$. Finally, $R_\lambda$ is invertible since $\tilde{R}$ is invertible.
\end{proof}

As a consequence of \cref{ParametersTheorem}, we can prove the existence of bifurcations in dynamical systems with parameters. Since the solution of \cref{ConjugacyEquationParameter} is unique, the Taylor expansion of $R(\lambda,x)$ is also unique and determined by \cref{ConjugacyEquationParameter} and our choice of $k_c$. Thus if we choose $k_c$ such that the Taylor approximation of $R$ is the normal form of a bifurcation, we know qualitatively how the dynamical behaviour of $F$ changes on the center manifold as the parameter changes. To obtain quantitative information of the dynamical behaviour on the center manifold, we want local bounds on the Taylor approximation of $R$. That is, given the conjugate dynamical system $R$ and its Taylor expansion $P_R$, we are interested in local bounds on $R - P_R$.

\begin{proposition}\label{TaylorBoundsProp}
Let $X = \mathbb{R}^m$ and $F : X \to X$ a $C^n$, $n \ge 2$, discrete dynamical system on $X$ such that $0$ is a fixed point of $F$. Let $k_c :X_c \to X_c$ be $C^n$ and such that \cref{MainTheorem} holds. Let $P_R$ and $P_K$ be the Taylor expansion of $R$ and $K$ of order $n-1$. Then there exist a neighborhood $\mathcal{U}$ of $0$ such that for every neighborhood $0 \in U \subset \mathcal{U}$ there exists constants $C_R$ and $C_K$ such that 
\begin{align*}
\norm{R(x) - P_R(x)} \le C_R \norm{x}^n && \text{ and } && \norm{K(x) - P_K(x) } \le C_K \norm{x}^n
\end{align*}
for all $x \in U$.
\end{proposition}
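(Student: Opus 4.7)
The plan is to apply Taylor's theorem with integral remainder directly to $R$ and $K$. By Theorem~\ref{MainTheorem}, both $R : X_c \to X_c$ and $K : X_c \to X$ are $C^n$, so their $n$th Fréchet derivatives $D^n R$ and $D^n K$ are continuous on $X_c$. For any $C^n$ map $f$ between Banach spaces with Taylor polynomial $P_{n-1}$ of order $n-1$ at $0$, one has
\begin{align*}
f(x) - P_{n-1}(x) = \frac{1}{(n-1)!} \int_0^1 (1-t)^{n-1} D^n f(tx)[x, \ldots, x] \, dt,
\end{align*}
whenever the segment $\{tx : t \in [0,1]\}$ lies in the domain. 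Taking operator norms gives
\begin{align*}
\|f(x) - P_{n-1}(x)\| \le \frac{1}{n!} \sup_{t \in [0,1]} \operatornorm{D^n f(tx)} \cdot \|x\|^n.
\end{align*}

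Next I would choose $\mathcal{U}$ to be any bounded convex neighborhood of $0$ in $X_c$ (e.g., an open ball of finite radius). For any $U$ with $0 \in U \subset \mathcal{U}$ and any $x \in U$, convexity of $\mathcal{U}$ forces the segment $\{tx : t \in [0,1]\}$ to lie inside $\mathcal{U}$. Since $X_c \subset X = \mathbb{R}^m$ is finite-dimensional, $\overline{\mathcal{U}}$ is compact and $D^n R$, $D^n K$ attain finite maxima on it. Setting
\begin{align*}
C_R \isdef \frac{1}{n!} \sup_{y \in \overline{\mathcal{U}}} \operatornorm{D^n R(y)}, \quad C_K \isdef \frac{1}{n!} \sup_{y \in \overline{\mathcal{U}}} \operatornorm{D^n K(y)}
\end{align*}
then gives finite constants realizing the claimed bounds on all of $U$; sharper, $U$-dependent constants can be obtained by restricting the suprema to $\overline{U}$, which improves as $U$ shrinks.

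There is no essential obstacle: the proposition merely packages Taylor's theorem using the $C^n$ regularity provided by Theorem~\ref{MainTheorem} together with the compactness of closed bounded subsets of $\mathbb{R}^m$. The one observation worth noting is that because the linear parts $A_c$ of $R$ and $\iota$ of $K$ equal their own Taylor polynomials, the displayed differences in fact coincide with the Taylor remainders of the nonlinear corrections $r$ and $(k_c, k_u, k_s)^\top$, whose $C^n$ norms Theorem~\ref{MainTheorem} controls globally; in particular one could take $\mathcal{U} = X_c$ and obtain constants valid on the whole center subspace, but the local statement above is all that is needed for the applications that follow.
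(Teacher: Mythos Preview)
Your argument is correct as a proof of the bare statement: since $R$ and $K$ are $C^n$ by Theorem~\ref{MainTheorem} and $X_c$ is finite-dimensional, Taylor's theorem with integral remainder together with local boundedness of $D^nR$ and $D^nK$ immediately yields constants $C_R$, $C_K$ with the stated property. Your remark that $r$, $k_c$, $k_u$, $k_s$ lie in $C^n_b$, so that one may even take $\mathcal{U}=X_c$, is also correct.

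However, your route is genuinely different from the paper's, and the difference matters for the intended use. Your constants are $C_R=\tfrac{1}{n!}\sup\|D^nR\|$ and $C_K=\tfrac{1}{n!}\sup\|D^nK\|$, but $R$ and $K$ are only known abstractly as the fixed point of a contraction; the theorem does not hand you numerical values for $\|D^nR\|$ or $\|D^nK\|$. The paper instead expands the conjugacy equation $F\circ K=K\circ R$ in Taylor form, isolates the remainders $h_R=R-P_R$ and $h_{K,u},h_{K,s}$, and derives a linear fixed-point system of the form $C\mapsto \mathcal{A}C+b$ for the vector $(C_R,C_{K,u},C_{K,s})$, whose entries are explicit in $\|A_c\|$, $\|A_u^{-1}\|$, $\|A_s\|$, $L_r$, $L_{-1}$, $L_c$, and local bounds on the known polynomials $P_F$, $P_K$, $P_R$. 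For $U$ small enough $\|\mathcal{A}\|<1$ and the unique fixed point gives \emph{computable} constants (see Remarks~\ref{NewRemark} and~\ref{TaylorBoundsDerivativeRemark}). This computability is precisely what is exploited in Section~\ref{ExampleSection} to produce rigorous numerical enclosures of the periodic orbit. So your proof establishes existence cleanly but does not deliver the explicit, data-driven bounds that are the proposition's real content in context.
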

\begin{remark}
We first remark that our constants $C_R$ and $C_K$ only depend on the neighborhood $U$, the dynamical system $F$ and our choice of $k_c$. Furthermore, the constants $C_R$ and $C_K$ can  explicitly be computed, see \cref{NewRemark}.
\end{remark}
\begin{remark}
For a dynamical system $F: W \times \mathbb{R}^m \to \mathbb{R}^m$ with parameter space $W \subset \mathbb{R}^k$, we can apply \cref{TaylorBoundsProp} to the extended dynamical system $\tilde{F} : \mathbb{R}^{m+k} \to \mathbb{R}^{m+k}$, where $\tilde{F}$ is defined as in the proof of \cref{ParametersTheorem}. Alternatively, for fixed $\lambda \in \tilde{W}$, we can apply \cref{TaylorBoundsProp} to \cref{ConjugacyEquationParameter}.
\end{remark}

\begin{proof}
Before we start the proof, let us recall what it means that $k_c$ is chosen such that \cref{MainTheorem} holds, which can be found in Remark 2.4 from \cite{Hetebrij19}. First, we have bounds $L_c$, $L_u$ and $L_s$ for  the derivatives of $k_c$, $k_u$ and $k_s$ respectively. We note that $L_u , L_s \le 1 + L_c$. Furthermore, we  have that $R = A_c + r$ is invertible with inverse $T = A_c^{-1} + t$, and the derivatives of $R$ and $T$ are bounded by $\operatornorm{A_c} + L_r$ and $L_{-1}$ respectively. Finally, we have the inequalities
\begin{align*}
\operatornorm{A_u^{-1}} \left( \left( \operatornorm{A_c } + L_r \right)^{n} + L_g + L_u \right) &< 1, \\
L_{-1}^{n} \left( \operatornorm{A_s} \left( 1 + L_{-1}L_s \right) + L_g \left( 1 + L_{-1} \left( 1 + L_c \right) \right) \right) &< 1,
\end{align*}
where $L_g$ is an upper bound for the derivative of $g$. Hence in particular we have the weaker bounds $\operatornorm{A_u^{-1}}  \left( \operatornorm{A_c } + L_r \right)^{n}  < 1$ and $L_{-1}^{n}  \operatornorm{A_s} < 1$.

As $F$ is $C^n$, we write $F =A + P_F + h_F$, where $A + P_F$ is the Taylor expansion up to order $n-1$ of $F$, and $h_F$ is of order $n$ around $0$. Likewise, we write $K = \iota + P_K + h_K$ and $R = A_c + P_R + h_R$. We expand the conjugacy equation using the Taylor expansions to obtain
\begin{align*}
Ah_K(x) - \iota h_R(x) + Q(x,h_K(x),h_R(x)) + h_F(K(x)) - h_K(R(x)) = 0.
\end{align*}
Here we define $Q \isdef AP_K + P_F(\iota + P_K + h_K)  - \iota P_R - P_K(A_c + P_R + h_R)$. We write the equation component-wise, i.e.\ using the splitting $X = X_c \oplus X_u \oplus X_s$ we obtain the three equations
\begin{align}
\left\{
\begin{aligned}
h_R &= A_c h_{K,c} + Q_{1,c} + Q_{2,c} h_{K} + Q_{3,c} h_R + h_{F,c}(K) - h_{K,c}(R), \\
A_u h_{K,u} &=  Q_{1,u} + Q_{2,u} h_{K} + Q_{3,u} h_R + h_{F,u}(K) - h_{K,u}(R), \\
A_s h_{K,s} &=  Q_{1,s} + Q_{2,s} h_{K} + Q_{3,s} h_R + h_{F,s}(K) - h_{K,s}(R), 
\end{aligned}
\right. \label{TaylorBoundsEq1}
\end{align}
where we write $Q$ as a polynomial in $x$, $h_K$ and $h_R$
\begin{align*}
Q(x,h_K(x),h_R(x)) = Q_1(x) + Q_2(x,h_K(x))h_K(x) + Q_3(x,h_R(x))h_R(x).
\end{align*}
We note that $Q_1$ is of order $\norm{x}^n$ at the origin. Furthermore, we have that $A_u$ and $R$ are both invertible, and recall that we defined $T = R^{-1}$. Hence we rewrite the second and third equation of \cref{TaylorBoundsEq1} as 
\begin{align}
\left\{ \begin{aligned}
h_R &= A_c h_{K,c} + Q_{1,c} + Q_{2,c} h_{K} + Q_{3,c} h_R + h_{F,c}(K) - h_{K,c}(R), \\
h_{K,u} &=  A_u^{-1} \left( Q_{1,u} + Q_{2,u} h_{K} + Q_{3,u} h_R + h_{F,u}(K) - h_{K,u}(R) \right), \\
h_{K,s} &=  \left( Q_{1,s} + Q_{2,s} h_{K} + Q_{3,s} h_R + h_{F,s}(K) - A_s h_{K,s} \right) \circ T.
\end{aligned} \right. \label{TaylorBoundsEq2}
\end{align}
To obtain local bounds on $h_R$ and $h_K$, we consider a bounded neighborhood $0 \in U $. Then we can  find intervals $[a_i , b_i]$ such that $U \subset \bigtimes_{i=1}^m [a_i, b_i] \isfed \mathcal{I}_U$. We consider cut-off functions of the form
\begin{align*}
\varphi_a^b(x) = \begin{cases} a & x \le a \\
x &a \le x \le b \\
b & b \le x
\end{cases}
\end{align*}
which we  will generalize to a $C^n$ cut-off function in \cref{ExampleSection}. We then consider the cut-off function on $\mathbb{R}^m$ defined by 
\begin{align}
\varphi_U(x)_i  \isdef \varphi_{a_i}^{b_i} (x_i). \label{Cut-off-function}
\end{align} 
Thus in particular it follows from \cref{TaylorBoundsEq2} that, where we will write $\varphi$ instead of $\varphi_U$,
\begin{align}
\left\{ \begin{aligned}
h_R \circ \varphi &= \left( A_c h_{K,c} + Q_{1,c} + Q_{2,c} h_{K} + Q_{3,c} h_R + h_{F,c}(K) - h_{K,c}(R) \right) \circ \varphi, \\
h_{K,u}  \circ \varphi&=  A_u^{-1} \left( Q_{1,u} + Q_{2,u} h_{K} + Q_{3,u} h_R + h_{F,u}(K) - h_{K,u}(R) \right) \circ \varphi, \\
h_{K,s}   \circ \varphi&=  \left( Q_{1,s} + Q_{2,s} h_{K} + Q_{3,s} h_R + h_{F,s}(K) - A_s h_{K,s} \right) \circ T \circ \varphi.
\end{aligned} \right. \label{TaylorBoundsEq3}
\end{align}
Here we define $Q_2 \circ \varphi (x) \isdef Q_2(\varphi(x), h_K(\varphi(x))) h_K(\varphi(x))$, and likewise define $Q_3 \circ \varphi$. We make the following observations:
\begin{itemize}
\item Any bound we find on $h_R \circ \varphi$ and $h_K \circ \varphi$ will be a local bound for $h_R$ and $h_K$ on $U$.
\item The functions $h_{K,c/s/u} \circ \varphi$, $h_R \circ \varphi$ and $h_F \circ \varphi$ are bounded and of order $\norm{x}^n$ at the origin. Therefore, there exists constants $C_{K,c/s/u}$, $C_R$ and $C_F$ depending on $U$ such that $\norm{h_{K,c/s/u}(\varphi(x))} \le C_{K,c/s/u} \norm{x}^n$, $\norm{h_R(\varphi(x))} \le C_R \norm{X}^n$ and $\norm{h_F(\varphi(x))} \le C_F \norm{X}^n$.
\item The derivatives of $K$, $R$ and $T$ are bounded by $1 +L_c$, $\operatornorm{A_c} + L_r$ and $L_{-1}$ respectively. Hence we have the bounds $\norm{K(x)} \le (1+ L_c ) \norm{x}$, $\norm{R(x)} \le (\operatornorm{A_c} + L_r ) \norm{x}$ and $\norm{T(x)} \le L_{-1} \norm{x}$.
\item There exists a constant $C_Q$ depending on $U$ such that $Q_2 \circ \varphi$ and $Q_3 \circ \varphi$ are bounded by $C_Q$. In particular, the constant $C_Q$ goes to zero when the diameter of the neighborhood $U$ goes to zero.
\item The polynomial $Q_1 \circ \varphi$ is bounded and is of order $\norm{x}^n$ at the origin. Therefore, there exists a constant $C_P$ depending on $U$ such that $\norm{Q_1(\varphi(x))} \le C_P \norm{x}^n$.
\end{itemize}
We can estimate $h_R \circ \varphi$, $h_{K,u}  \circ \varphi$ and $h_{K,s}   \circ \varphi$ using those observations and \cref{TaylorBoundsEq3}. We apply the triangle inequality and substitute the estimates from our observations in \cref{TaylorBoundsEq3} to obtain
\begin{align*}
\left\{ \begin{aligned}
\norm{h_R(\varphi(x))} &\le  ( \operatornorm{A_c} C_{K,c}  +C_P  + C_Q\left( C_{K,c}  + C_{K,s} + C_{K,u} + C_{R} \right) \\
				&\quad \quad  + C_F (1 + L_c)^n + C_{K,c} (\operatornorm{A_c} + L_r)^n ) \norm{x}^n, \\
\norm{h_{K,u} ( \varphi(x))}&\le  \operatornorm{A_u^{-1}} ( C_P +C_Q\left( C_{K,c}  + C_{K,s} + C_{K,u} + C_{R} \right) \\
				&\quad \quad  + C_F (1 + L_c)^n  + C_{K,u} (\operatornorm{A_c} + L_r)^n ) \norm{x}^n , \\
\norm{h_{K,s}  ( \varphi(x))} &\le L_{-1}^n  ( C_P  + C_Q( C_{K,c}  + C_{K,s} + C_{K,u} + C_{R})  \\
				&\quad \quad + C_F(1 + L_c)^n + \operatornorm{A_s} C_{K,s} ) \norm{x}^n.
\end{aligned} \right.
\end{align*}
Here we used the crude estimate $\norm{h_K(\varphi(x))} \le C_{K,c} \norm{x}^n + C_{K,s} \norm{x}^n + C_{K,u} \norm{x}^n$.

To obtain the constants $C_R$, $C_{K,u}$ and $C_{K,s}$ we will show that the map
\begin{align}
\begin{pmatrix}
C_R \\
C_{K,u} \\
C_{K,s}
\end{pmatrix} &\mapsto
\begin{pmatrix*}[l]
 \operatornorm{A_c} C_{K,c}  +C_P  + C_Q\left( C_{K,c}  + C_{K,s} + C_{K,u} + C_{R} \right) \\
				\qquad \qquad \qquad \qquad  + C_F (1 + L_c)^n + C_{K,c} (\operatornorm{A_c} + L_r)^n   \\
 \operatornorm{A_u^{-1}} ( C_P +C_Q\left( C_{K,c}  + C_{K,s} + C_{K,u} + C_{R} \right) \\
				\qquad \qquad \qquad \qquad  + C_F (1 + L_c)^n  + C_{K,u} (\operatornorm{A_c} + L_r)^n ) \\
 L_{-1}^n  ( C_P  + C_Q( C_{K,c}  + C_{K,s} + C_{K,u} + C_{R})  \\
				\qquad \qquad \qquad \qquad + C_F(1 + L_c)^n + \operatornorm{A_s} C_{K,s} ) 
\end{pmatrix*}  \label{TaylorBoundsEq4} 
\end{align}
has a unique component-wise positive fixed point.  We first note that \cref{TaylorBoundsEq4} is of the form $C \mapsto \mathcal{A}C + b$. We find
\begin{align}
\norm{\mathcal{A}}_{\infty} &\le \max \left\{ \operatornorm{A_u^{-1}}(\operatornorm{A_c} + L_r)^n , L_{-1}^n \operatornorm{A_s} \right \}   +3  \max\{1,\operatornorm{A_u^{-1}} , L_{-1}^n \}  C_Q. \label{TaylorBoundsEq5}
\end{align} 
As we mentioned at the beginning of the proof, we have $\operatornorm{A_u^{-1}}(\operatornorm{A_c} + L_r)^n < 1$ and $L_{-1}^n \operatornorm{A_s} < 1$. Furthermore, $C_Q$ can be made as small as desired by decreasing the diameter of $U$. Hence there exists a neighborhood $\mathcal{U}$ of $0$ such that $C \mapsto \mathcal{A}C + b$ is a contraction for every neighborhood $U \subset \mathcal{U}$. We know from the smoothness of $R$ and $K$ that there exist large, non-explicit bounds  $C_R$, $C_{K,u}$ and $C_{K,s}$. From this starting point, by iterating the inequalities above we conclude that the unique fixed point of \cref{TaylorBoundsEq4} provides an explicit bound.
\end{proof}

\begin{remark}\label{NewRemark} We again write $C \mapsto \mathcal{A} C + b$ for \cref{TaylorBoundsEq4}, and note that $\mathcal{A}$ and $b$ are both component-wise positive. In practice, we do not compute the right hand side of \cref{TaylorBoundsEq5} and show that it is strictly less than $1$. Instead, we find $\tilde{C}$ component-wise positive such that $\mathcal{A} \tilde{C} + b < \tilde{C}$ component-wise. From the Min-max Collatz–Wielandt formula it follows that \cref{TaylorBoundsEq4} is a contraction, so it has a unique fixed point, explicitly given by $C  \isdef (\mathcal{A} - \operatorname{Id})^{-1}b$. We can then either use $C$ or our Ansatz $\tilde{C}$, which is component-wise larger than $C$, to obtain the constants $C_R$, $C_{K,u}$ and $C_{K,s}$.
\end{remark}
\begin{remark}\label{TaylorBoundsDerivativeRemark}
In the proof \cref{TaylorBoundsProp} we find constants $C_R$ and $C_K$ such that $\norm{h_R(x)} \le C_R \norm{x}^n$ and $\norm{h_K(x)} \le C_K \norm{x}^n$ for $h_R \isdef R - A_c - P_R$ and $h_K \isdef K - \iota - P_K$. We note that under the assumptions of \cref{TaylorBoundsProp} we can also find constants $C_{R,m}$ and $C_{K,m}$ such that  $\norm{D^m h_R(x)} \le C_{R,m} \norm{x}^{n-m}$ and $\norm{D^m h_K(x)} \le C_{K,m} \norm{x}^{n-m}$. To obtain those bounds, we have to modify the proof slightly. Before we introduce the cut-off function $\varphi_U$ in \cref{Cut-off-function}, we take the $m^{\text{th}}$ derivative on both sides of \cref{TaylorBoundsEq2}. We then obtain an expression similar to \cref{TaylorBoundsEq3} for the composition of the $m^{\text{th}}$ derivative of $h$ and $\varphi_U$, except the right hand side of the equation will be more complicated due to taking derivatives of compositions and products. We can still use the triangle inequality to obtain a system like the system in \cref{TaylorBoundsEq4} we have to solve to obtain $C_{R,m}$ and $C_{K,m}$. That is, the right hand side of the system will be of the form
\begin{align*}
\left(
\begin{pmatrix}
0 & 0 & 0 \\
0 & \operatornorm{A_u^{-1}} (\operatornorm{A_c} + L_r)^{n-m} & 0 \\
0 & 0 & \operatornorm{A_s}  L_{-1}^{n-m}  
\end{pmatrix} + \mathcal{E}_U \right) \begin{pmatrix}
C_{R,m} \\
C_{K,m,u} \\
C_{K,m,s}
\end{pmatrix} + \mathcal{D}_U.
\end{align*}
Here the norm of the matrix $\mathcal{E}_U$ goes to $0$ as the diameter of $U$ goes to zero, and the vector $\mathcal{D}_U$ is continuous as function of the diameter of $U$. Since both $\operatornorm{A_u^{-1}} (\operatornorm{A_c} + L_r)^{n-m} < 1$ and $\operatornorm{A_s} L_{-1}^{n-m} < 1$, there exists a neighborhood $\mathcal{U}$ of $0$ such that  \cref{TaylorBoundsEq4} has a positive solution for $C_{K,m,u}$, $C_{K,m,s}$ and $C_{R,m}$  for every neighborhood $U \subset \mathcal{U}$.

 Finally, if $X$ is higher dimensional, we do not have to assume that our Taylor expansions are of the same order in every variable. For example, we may also obtain a Taylor expansion in two variables $x$ and $y$ of the form $c_{0,0} + c_{1,0} x + c_{2,0}x^2  + c_{3,0} x^3 + c_{0,1}y + c_{1,1} x y$. In that case our bound on $R-P_R$ will reflect the different orders of the Taylor expansion, i.e.\ in our example the bound will be of the form $c_{4,0}x^4 + c_{0,2} y^2 + c_{2,1}x^2 y + c_{1,2} x y^2$.
\end{remark}

\begin{remark}\label{TaylorBoundsBetterRemark}
In the proof of \cref{TaylorBoundsProp} we used the crude estimate $\norm{R (x)} \le (\operatornorm{A_c} + L_r) \norm{x}$ in the observations after \cref{TaylorBoundsEq3}. The bound we find for $C_R$ and $C_K$ by solving \cref{TaylorBoundsEq4} depends on this estimate. That is, if we have a better estimate for $R(x)$, the bound we find for $C_R$ and $C_K$ improves. Since we know the Taylor expansion of $R$, and we know that the constant $C_R$ exists on $U$ as $R$ is $C^n$, we can improve the bound on $R(x)$ using
\begin{align*}
\norm{R(\varphi(x))} = \norm{A_c \varphi(x) + P_R(\varphi(x)) + h_R(\varphi(x))} \le \norm{A_c \varphi(x) + P_R(\varphi(x))}  +C_R \norm{x}^n.
\end{align*}
We can similarly improve the bounds for $\norm{K(x)}$ and $\norm{T(x)}$ on $U$ in the same observation.
\end{remark}

\section{Continuous time dynamical systems}\label{SectionODE}

Besides center manifolds for discrete time systems, we are also interested in center manifolds for ODEs. Hence our second generalization of \cref{MainTheorem} will be a parameterization theorem for ODEs. To prove the existence of  center manifolds, we will use the \tmaps\ of the ODE. That is, we will show that there exists a time-independent conjugacy between the \tmaps\  of the ODE and \tmaps\  of a conjugate vector field on the center subspace. To do so, the \tmaps\  of our ODE have to exist and have to be sufficiently smooth. We first state an existence result. We recall that  we defined sectorial operators in \cref{SectionNotation} such that $\exp(A t)$ exists for all $t \ge 0$ if $A$ is sectorial.

\begin{proposition}\label{FlowExistence}
Let $A: X \to X$ be a sectorial operator and $g : X \to X$ a uniformly bounded $C^n$ function. Then there exists a jointly $C^n$ flow map $\varphi : (0, \infty) \times X \to X$ such that
\begin{align*}
\varphi(t,x) = \exp(A t) x + \int_0^t \exp(A(t-s)) g(\varphi(s,x)) \text{d}s.
\end{align*}
In particular, $\varphi(t,x)$ is a solution to the ODE
\begin{align*}
\dot{x}(t) = Ax(t) + g(x(t)) && \text{ for } t>0.
\end{align*}
\end{proposition}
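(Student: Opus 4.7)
The plan is to treat the stated identity as the defining integral equation (the \emph{mild} form via the variation of constants formula) and solve it by a Banach fixed-point argument, then upgrade the mild solution to a classical solution and establish joint $C^n$ regularity. Since $A$ is sectorial, the classical semigroup theory produces an analytic semigroup $\{e^{At}\}_{t\ge 0}$ with an operator-norm bound $\operatornorm{e^{At}} \le M e^{\omega t}$ for some constants $M, \omega$. Given $x \in X$ and $T > 0$, I would set up the map
\[
\Phi_x(\psi)(t) \isdef e^{At} x + \int_0^t e^{A(t-s)} g(\psi(s))\, \ds
\]
on the Banach space $C([0,T], X)$ equipped with the sup norm (or an exponentially weighted variant).

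For the fixed-point step I would use that $g \in C^n_b$ with $n \ge 2$, so $g$ is globally Lipschitz with constant $L \isdef \supnorm{Dg}$. A direct estimate gives $\supnorm{\Phi_x(\psi_1) - \Phi_x(\psi_2)} \le M e^{|\omega| T} L T \supnorm{\psi_1 - \psi_2}$, so $\Phi_x$ is a contraction for $T$ small enough, yielding a unique mild solution $\varphi(\cdot, x)$ on $[0,T]$. Because $g$ is globally bounded, the solution cannot blow up in finite time, so iterating the argument extends $\varphi(\cdot, x)$ to all of $[0, \infty)$. Continuity of $\varphi(t, x)$ jointly in $(t, x)$ is immediate from the integral formula together with a Gronwall estimate applied to the difference of two integral equations.

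Next, I would establish $C^n$ smoothness in $x$ by formally differentiating the integral equation: the candidate first-order Fr\'echet derivative $D_x\varphi(t, x)$ satisfies the linear integral equation
\[
D_x\varphi(t, x) = e^{At} + \int_0^t e^{A(t-s)} Dg(\varphi(s, x))\, D_x\varphi(s, x)\, \ds,
\]
which can be solved by the same contraction mapping argument (the kernel is linear, but the Lipschitz structure is the same). Identifying this object with the actual Fr\'echet derivative is a standard limit argument, and higher derivatives follow by induction using $g \in C^n_b$. Smoothness in $t$ for $t > 0$ comes from the analyticity of the semigroup: by parabolic regularization the mild solution lies in $\mathcal{D}(A)$ for $t > 0$, satisfies the pointwise ODE $\dot\varphi = A\varphi + g(\varphi)$, and the right-hand side is then as smooth in $t$ as the composition $g \circ \varphi$ permits.

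The main obstacle is the \emph{joint} $C^n$ regularity, because $A$ is unbounded and one cannot naively commute differentiation with the integral near $s = t$. The standard resolution is to invoke the parabolic smoothing estimate $\operatornorm{A e^{At}} \le C t^{-1} e^{\omega t}$ available for analytic semigroups, which renders integrals of the form $\int_0^t A e^{A(t-s)} g(\varphi(s,x))\, \ds$ convergent and differentiable, and to combine the spatial and temporal regularity steps with a bookkeeping argument tracking derivatives of $g \circ \varphi$. This program is carried out in detail in the standard references on semilinear parabolic problems, which one could either invoke as a black box or reproduce here.
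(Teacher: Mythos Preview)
Your sketch is correct and outlines the standard semilinear parabolic argument. The paper's own proof, however, does not carry out any of this: it simply cites Theorem 3.3.4 of Henry's \emph{Geometric Theory of Semilinear Parabolic Equations} for existence and Corollary 3.4.6 of the same reference for joint $C^n$ smoothness. What you have written is essentially an outline of the content of those results, so your approach and the paper's are the same in substance; you reproduce the argument, while the paper treats it as a black box. Your closing remark that one ``could either invoke [the standard references] as a black box or reproduce here'' is exactly the choice the authors made.
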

\begin{proof}
The existence of the flow map for all $t > 0$ is Theorem 3.3.4 of \cite{Henry93} and the smoothness of the flow map is Corollary 3.4.6 of \cite{Henry93}.
\end{proof}
\begin{remark}
The flow $\varphi$ exists for all $t \ge 0$, but if $A$ is not bounded then $\varphi$ is not necessarily differentiable at $t = 0$.
\end{remark}

Beside the existence result of the \tmap, we also need that the non-linearity $G_t$ of the \tmap\ is uniformly bounded. In fact, we have an explicit bound on the derivative of $G_t$ in terms of $A$, $t$ and $\supnorm{Dg}$.

\begin{proposition}\label{DerivativeNonLinearSpace}
Let $A: X \to X$ be a sectorial operator and $g : X \to X$ a uniformly bounded $C^n$ function. We can write the flow map $\varphi_\tau$ as
\begin{align*}
\varphi_\tau(x) = \exp(A \tau) x + G_\tau(x).
\end{align*}
Furthermore, if $g(0) = 0$ and $Dg(0) =0$, then 
\begin{align*}
G_\tau \in \left\{ h \in C_b^n(X) \ \middle| \ h(0) = 0, \ Dh(0) = 0 \text{ and } \supnorm{Dh} \le L_G(\supnorm{Dg}, \tau) \right\},
\end{align*}
where we define 
\begin{align}
L_G(\supnorm{Dg}, \tau) \isdef \tau \supnorm{Dg} \sup_{s \in [ 0, \tau]} \operatornorm{\exp(A s)}^2 \exp\left( \supnorm{Dg} \int_0^\tau \operatornorm{ \exp(A ( \tau -s ))} \ds  \right). \label{definitionLGEquation}
\end{align}
\end{proposition}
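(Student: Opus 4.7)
The plan is to start from the variation of constants representation supplied by Proposition~\ref{FlowExistence}, namely
\begin{align*}
G_\tau(x) = \int_0^\tau \exp(A(\tau-s))\, g(\varphi(s,x))\, \ds,
\end{align*}
and read off the three claims one by one. Joint $C^n$ regularity of $G_\tau$ is inherited from that of the flow $\varphi$ asserted in Proposition~\ref{FlowExistence}, and therefore so is uniform boundedness of each $D_x^k G_\tau$ once the relevant derivative bounds are in hand. Since $g(0)=0$, uniqueness for the variation of constants equation forces $\varphi(s,0)\equiv 0$, so $G_\tau(0)=0$; differentiating under the integral and using $Dg(0)=0$ likewise yields $DG_\tau(0)=0$.

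The core of the proof is the quantitative bound on $\supnorm{DG_\tau}$. Differentiating the integral representation in $x$ produces
\begin{align*}
DG_\tau(x) = \int_0^\tau \exp(A(\tau-s))\, Dg(\varphi(s,x))\, D\varphi(s,x)\, \ds.
\end{align*}
I would then substitute $D\varphi(s,x) = \exp(As) + DG_s(x)$, which comes directly from the splitting $\varphi(s,x) = \exp(As)x + G_s(x)$, and split the integral into two pieces. The piece containing $\exp(As)$ is controlled pointwise in operator norm by $\supnorm{Dg}\,M^2$ with $M := \sup_{s\in[0,\tau]} \operatornorm{\exp(As)}$, since $\operatornorm{\exp(A(\tau-s))\exp(As)} \le M^2$. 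Writing $u(t) := \operatornorm{DG_t(x)}$, repeating the same estimate on $[0,t]$ for every $t \in [0,\tau]$ yields the integral inequality
\begin{align*}
u(t) \le t\,\supnorm{Dg}\,M^2 + \supnorm{Dg} \int_0^t \operatornorm{\exp(A(t-s))}\, u(s)\, \ds, \qquad t \in [0,\tau].
\end{align*}

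The main obstacle is the Gronwall step, because the kernel $\operatornorm{\exp(A(t-s))}$ depends on both $t$ and $s$, so the classical scalar Gronwall lemma does not apply verbatim. I would handle this either by iterating the inequality and exploiting that $t \mapsto \int_0^t \operatornorm{\exp(A(t-r))}\,\text{d}r$ is a nondecreasing function of $t$ (and after the change of variables $r \mapsto t-r$ equals $\int_0^t \operatornorm{\exp(Ar)}\,\text{d}r$), or by invoking a singular Gronwall lemma for convolution-type kernels such as Lemma~7.1.1 of \cite{Henry93}. Either route furnishes
\begin{align*}
u(\tau) \le \tau\,\supnorm{Dg}\,M^2 \exp\!\left(\supnorm{Dg} \int_0^\tau \operatornorm{\exp(A(\tau-s))}\, \ds\right) = L_G(\supnorm{Dg},\tau)
\end{align*}
uniformly in $x\in X$, which is exactly the definition of $L_G$ in \cref{definitionLGEquation}. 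Higher-order derivative bounds needed to conclude $G_\tau \in C^n_b$ follow by applying the same scheme to the successive variational equations, each of which produces an integral inequality of the same type that can be closed by the same Gronwall argument.
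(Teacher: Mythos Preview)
Your proposal is correct and follows essentially the same route as the paper: define $G_\tau$ via the variation of constants formula, differentiate under the integral, split $D\varphi(s,x) = \exp(As) + DG_s(x)$, and close with a Gr\"onwall argument; higher derivatives are handled analogously. The only cosmetic differences are that the paper obtains $G_\tau(0)=0$ by applying Gr\"onwall to $\|G_\tau(0)\|$ rather than invoking uniqueness, and the paper simply writes ``Gr\"onwall's inequality then gives the upper bound'' without flagging the convolution-type dependence of the kernel $\operatornorm{\exp(A(t-s))}$ that you carefully identify and resolve.
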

\begin{proof}
From \cref{FlowExistence}, we know that the \taumap\ satisfies
\begin{align*}
\varphi_\tau(x)  = \exp(A \tau) x + \int_0^\tau \exp(A(\tau-s)) g(\varphi(s,x)) \text{d}s.
\end{align*}
Thus $G_\tau$ is defined as
\begin{align}
G_\tau (x) \isdef  \int_0^\tau \exp(A(\tau-s)) g( \exp(A s)x + G_s(x)) \text{d}s, \label{DefinitionNonLinearity}
\end{align}
which is jointly $C^n$ on $(0,\infty)$. We will use Gr\"{o}nwall's inequality to show that $G_\tau(0) = 0$. Using the smoothness of $G_\tau$, we can show that also $DG_\tau(0) = 0$. Finally, we will only prove that $\supnorm{DG_\tau} \le L_G( \supnorm{Dg},\tau)$, as the proof that $G_\tau$ is uniformly bounded  in the $C^n$ norm is similar.

By taking the norm on both sides of \cref{DefinitionNonLinearity} for $x = 0$, we obtain
\begin{align*}
\norm{G_\tau(0)} \le \int_0^\tau \operatornorm{ \exp(A (\tau -s ))} \norm{g(G_s(0))} \ds.
\end{align*}
Both $s \mapsto  \norm{g(G_s(0))}$ and $s \mapsto \operatornorm{ \exp(A (\tau -s ))}$ are continuous, thus with Gr\"{o}nwall's inequality we obtain $\norm{G_\tau(0)} \le 0$, and therefore $G_\tau(0) = 0$. 

To show that $DG_\tau(0) = 0$, we take the derivative of \cref{DefinitionNonLinearity} and obtain
\begin{align}
DG_\tau(x) &= \int_0^\tau \exp(A(\tau - s)) Dg(\exp(As)x + G_s(x)) \exp(As) \ds  \nonumber \\
			&\quad + \int_0^\tau \exp(A(\tau - s)) Dg(\exp(As)x + G_s(x)) DG_s (x) \ds. \label{DerivativeNonLinearityODE}
\end{align}
For $x= 0 $ we obtain
\begin{align*}
DG_\tau(0) &= \int_0^\tau \exp(A(\tau - s)) Dg(0) \left( \exp(As) + DG_s(0) \right) \ds  = 0,
\end{align*}
since $Dg(0) = 0$.

For the upper bound on $DG_\tau$, we take the norm on both sides of \cref{DerivativeNonLinearityODE} and obtain
\begin{align*}
\norm{DG_\tau(x)} &\le \int_0^\tau \operatornorm{\exp(A ( \tau -s ))} \supnorm{Dg} \operatornorm{\exp(As)} \ds  \\
				&\quad + \int_0^\tau \operatornorm{ \exp(A ( \tau -s ))} \supnorm{Dg} \norm{DG_s(x)} \ds \\
				&\le \tau \supnorm{Dg} \sup_{s \in [ 0, \tau]} \operatornorm{\exp(A s)}^2 + \int_0^\tau \operatornorm{ \exp(A ( \tau -s ))} \supnorm{Dg} \norm{DG_s(x)} \ds.
\end{align*}
Gr\"{o}nwall's inequality then gives the upper bound
\begin{align*}
\norm{DG_\tau(x)} \le \tau \supnorm{Dg} \sup_{s \in [ 0, \tau]} \operatornorm{\exp(A s)}^2 \exp\left( \supnorm{Dg} \int_0^\tau \operatornorm{ \exp(A ( \tau -s ))} \ds  \right).
\end{align*}
Since the right hand side does not depend on $x$, we find
\begin{align*}
\supnorm{DG_\tau} &\le \tau \supnorm{Dg} \sup_{s \in [ 0, \tau]} \operatornorm{\exp(A s)}^2 \exp\left( \supnorm{Dg} \int_0^\tau \operatornorm{ \exp(A ( \tau -s ))} \ds  \right) \\
				&\isfed L_G(\supnorm{Dg},\tau).
\end{align*}
We can use analogous estimates to recursively show that $G_\tau$ is uniformly bounded in $C^n$ norm. That is, we take the $m$-th derivative of \cref{DefinitionNonLinearity} for $m \le n$, take the norm on both sides of this new equation and apply Gr\"{o}nwall's inequality.
\end{proof}

\subsection{Parameterization theorem for ODEs}

The previous two propositions allow us to use \cref{MainTheorem} on a single \tmap, provided the non-linearity $g$ is small enough. We would like to apply \cref{ParametersTheorem} to a collection of \tmaps. If we consider the dynamical system $(t,x) \mapsto \varphi_{t+\tau}(x)$ on $\mathbb{R} \times X$ as we would do in \cref{ParametersTheorem}, then the non-linearity of our dynamical system is given by $(t,x) \mapsto  \left(\exp(A t) - \exp(A \tau)  \right) x + G_{\tau + t}(x)$. In particular, we see that the time derivative of the non-linearity will be unbounded, hence we can not apply \cref{ParametersTheorem} to a collection of \tmaps.  However, we can construct a collection of discrete dynamical systems on the center subspace, and prove that these discrete dynamical systems are precisely the \tmaps\ of a conjugate vector field on the center subspace.

\begin{theorem}[Center manifolds for flows]\label{ODETheorem} Let $X$ be a Banach space and $f{}{} = A + g$ a vector field on $X$. Here $A : \mathcal{D}(A) \subset X \to X$ is a sectorial operator and $g{}{}: X \to X$ is a  $C^n$, $n \ge 2$, vector field such that $g(0) = 0$ and $Dg(0) = 0$. Let $k_c : X_c \to X_c$ be chosen. Assume that there exists a $\tau > 0$ such that 
\begin{enumerate}
\item There exists closed subspace $X_c$, $X_u$ and $X_s$ such that $X = X_c \oplus X_u \oplus X_s$ for which we have $A : \mathcal{D}(A) \cap X_c \to X_c$, $A : \mathcal{D}(A) \cap X_u \to X_u$ and $A : \mathcal{D}(A) \cap X_s \to X_s$. We write $A = \tmatrix{A_c & 0 & 0}{0 & A_u & 0}{0 & 0 & A_s}$ where we define $A_c \isdef A \big|_{\mathcal{D}(A) \cap X_c}$, and similarly define $A_u$ and $A_s$. In particular, we then have 
\begin{align*}
\exp(A\tau) = \begin{pmatrix}\exp(A_c\tau) & 0 & 0 \\ 0 & \exp(A_u\tau) & 0 \\ 0 & 0 & \exp(A_s \tau)\end{pmatrix}.
\end{align*}
\item The linear operators $\exp(A_c\tau)$ and $\exp(A_u\tau)$ are invertible.
\item The norm on $X$ is such that
\begin{align*}
\operatornorm{\exp(A_c\tau)^{-1} }^{\tilde{n}} \operatornorm{ \exp(A_s\tau) } < 1 && \text{ and } && \operatornorm{\exp(A_u\tau)^{-1} } \operatornorm{\exp(A_c\tau)}^{\tilde{n}} < 1
\end{align*}
for all $1 \le \tilde{n} \le n$.
\item The non-linearities $g$ and $k_c$ satisfy
\begin{align*}
g{}{} &\in \left\{ h \in C^{n}_b(X,X) \mid h(0)=0, \   Dh(0) = 0 \text{ and }\supnorm{Dh} < L_g \right\}, \\ 
k_c &\in \left\{ h \in C^{n+1}_b(X_c,X_c) \mid h(0) = 0 , \ Dh(0) = 0 \text{ and } \supnorm{Dh} < L_c \right\},
\end{align*}
such that $L_G(L_g,\tau)$, defined in \cref{definitionLGEquation}, and $L_c$ are small enough in the sense of \cref{MainTheorem}.
\end{enumerate}
Then there exist a $C^{n}$ conjugacy $K{}{} :  X_c \to X$ and $C^{n}$ vector field $R = A_c + r : X_c \to X_c$ such that the flow $\psi : [0,\infty) \times X_c \to X_c $ of $R$ is mapped to the flow of $f{}{}$ under $K$:
\begin{align*}
\varphi_t \circ K &= K \circ \psi_t \qquad  \text{for all $t \ge 0$.}
\end{align*}
Furthermore, $K{}{}$ and $R$ have the following properties:
\begin{enumerate}[label=\Alph*)]
\item\label{PropertiesR} The vector field $R = A_c + r$ has forward and backward flow which satisfies the ODE $\dot{y}(t) = R(y(t))$ for all initial conditions $y(0) = y_0 \in X_c$. Furthermore, we have
\begin{align*}
r &\in \left\{ h \in C^{n}_b(X_c,X_c) \mid h(0)=0 \text{ and }   Dh(0) = 0  \right\}. 
\end{align*}
\item\label{PropertiesK} The conjugacy $K{}{}$ is given by
\begin{align*}
K{}{} = \iota +  \begin{pmatrix}  k_c \\ k_u \\ k_s \end{pmatrix}
\end{align*}
with $\iota: X_c \to X$ the inclusion map and
\begin{align*}
k_u &\in \left\{ h \in C^{n}_b(X_c,X_u) \mid h(0)=0, \   Dh(0) = 0 \text{ and }\supnorm{Dh} \le L_u \right\}, \\ 
k_s &\in \left\{ h \in C^{n}_b(X_c,X_s) \mid h(0)=0, \   Dh(0) = 0 \text{ and }\supnorm{Dh} \le L_s \right\}. 
\end{align*}
The constants $L_u$ and $L_s$ depend on $L_G(L_g,\tau)$ and $L_c$.
\end{enumerate}
\end{theorem}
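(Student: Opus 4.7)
The plan is to apply the discrete parameterization theorem \cref{MainTheorem} to the single time-$\tau$ map $\varphi_\tau$ and then to promote the resulting conjugacy $K$ to one that works simultaneously for every $\varphi_t$, $t\ge 0$, by building a $C^n$-semigroup $\{\psi_t\}_{t\ge 0}$ on $X_c$ whose infinitesimal generator will be the desired vector field $R$. First, \cref{FlowExistence,DerivativeNonLinearSpace} give $\varphi_\tau(x) = \exp(A\tau)x + G_\tau(x)$ with $G_\tau(0) = 0$, $DG_\tau(0) = 0$ and $\supnorm{DG_\tau}\le L_G(L_g,\tau)$; the $A$-invariant splitting $X = X_c \oplus X_u \oplus X_s$ carries over to an $\exp(A\tau)$-invariant one, and after possibly passing to an equivalent box-norm as in \cref{MainTheoremEquivalentNorm}, hypotheses 1--4 of \cref{ODETheorem} supply precisely the invertibility, operator-norm, and non-linearity smallness conditions required by \cref{MainTheorem}. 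Applying \cref{MainTheorem} to $\varphi_\tau$ with the prescribed $k_c$ therefore produces a $C^n$ conjugacy $K = \iota + (k_c,k_u,k_s)^T : X_c \to X$ and a globally invertible $C^n$ diffeomorphism $\psi_\tau = \exp(A_c\tau) + r_\tau$ satisfying $\varphi_\tau\circ K = K\circ\psi_\tau$.

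Next, I extend $K$ to a conjugacy for every $\varphi_t$. Since $\supnorm{Dk_c} < L_c < 1$, the map $K_c := \iota_c + k_c$ is a $C^n$ diffeomorphism of $X_c$, so one may define $\psi_t(x) \isdef K_c^{-1}(\pi_c(\varphi_t(K(x))))$, with $\pi_c$ the projection onto $X_c$. The identity $\varphi_t\circ K = K\circ\psi_t$ then amounts to the invariance $\varphi_t(K(X_c)) \subset K(X_c)$ of the center manifold under the full flow. To establish it I exploit flow commutativity: $L_t := \varphi_t\circ K$ satisfies $\varphi_\tau\circ L_t = L_t\circ\psi_\tau$, so $L_t$ intertwines $\psi_\tau$ with $\varphi_\tau$ in the same way as $K$ does. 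After rescaling by $\exp(-A_c t)$ to restore identity linearisation in the center direction at the origin, a uniqueness argument based on the contraction mapping underlying the proof of \cref{MainTheorem} forces the rescaled $L_t$ to parameterise the same manifold as $K$, which yields invariance and hence $\varphi_t(K(x)) = K(\psi_t(x))$ for all $t \ge 0$ and $x \in X_c$.

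With the conjugacy in hand for every $t$, the semigroup relations $\psi_0 = \iota$ and $\psi_{t+s} = \psi_t\circ\psi_s$ follow from injectivity of $K$ and the flow property of $\varphi_t$; joint $C^n$-smoothness of $(t,x)\mapsto\psi_t(x)$ is inherited from $\varphi_t$ and $K_c^{-1}$, where the extra $C^{n+1}$ regularity of $k_c$ assumed in hypothesis~4 provides the derivative in $t$ needed to make the generator $C^n$. The infinitesimal generator $R(x) \isdef \lim_{t\downarrow 0}(\psi_t(x)-x)/t$ is then a $C^n$ vector field on $X_c$; differentiating $\varphi_t\circ K = K\circ\psi_t$ at $t = 0$ yields $f\circ K = DK\cdot R$, and linearisation at $0$ gives $R = A_c + r$ with $r(0) = 0$ and $Dr(0) = 0$. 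Global forward and backward existence of the flow of $R$ follows from the semigroup together with global invertibility of $\psi_\tau$, and the bounds on $k_u, k_s$ in property~\ref{PropertiesK} are inherited directly from \cref{MainTheorem} applied to $\varphi_\tau$. The main obstacle is the invariance step above: since \cref{MainTheorem} cannot be invoked directly for $\varphi_t$ with $t \ne \tau$ (the operator-norm condition of hypothesis~3 and the smallness bound on $L_G(L_g, t)$ need not persist for other times), one must descend into the fixed-point construction of the discrete theorem and carefully match two parameterisations with differing center-component data to conclude that they trace out the same invariant manifold.
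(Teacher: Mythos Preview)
Your strategy matches the paper's: apply \cref{MainTheorem} to $\varphi_\tau$, push the conjugacy $K$ to all $\varphi_t$ via uniqueness of the parameterised manifold, verify the semigroup law, and differentiate at $t=0$ to extract $R$. Your explicit formula $\psi_t = K_c^{-1}\circ\pi_c\circ\varphi_t\circ K$ is a clean way to name the candidate (the paper instead recovers $\psi_t$ from the diffeomorphism produced by its uniqueness proposition), and your identification of the invariance step as the crux is correct.

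Two places in your sketch are genuinely too fast and correspond to non-trivial work in the paper. First, the uniqueness argument does not give invariance for all $t\ge 0$ in one stroke. To invoke the uniqueness of the parameterisation you must check that the rescaled center part $\tilde k_c = (\varphi_t\circ K\circ\exp(-A_ct))_c - \operatorname{Id}$ still satisfies $\supnorm{D\tilde k_c} < L_c$; this holds only for $t$ in some interval $[0,T)$ by continuity. The paper restricts to $[0,T)$ first and only afterwards extends to all $t\ge 0$ via the semigroup law (iterating $\varphi_{T/2}$).

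Second, and more seriously, the sentence ``joint $C^n$-smoothness of $(t,x)\mapsto\psi_t(x)$ is inherited from $\varphi_t$ and $K_c^{-1}$'' hides the main analytic difficulty. The flow $\varphi_t$ is only guaranteed jointly $C^n$ for $t>0$; when $A$ is unbounded it need not be differentiable at $t=0$, so you cannot simply read off the generator. One needs two ingredients: that $A_c$ is bounded (the paper derives this from invertibility of $\exp(A_c\tau)$, writing $A_c = (A_c\exp(A_c\tau))\exp(A_c\tau)^{-1}$), and that the center component $G_{c,t}$ of the nonlinear part is differentiable at $t=0$ with $C^n$ derivative $g_c$. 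The paper handles this by characterising $\psi_t$ as the fixed point of a contraction $\Xi$ on $C^0_b([0,T)\times X_c,X_c)$ built from the center equation, checking directly that $\Xi$ preserves time-differentiability with $C^n$ spatial derivative (this is where $k_c\in C^{n+1}$ enters, via $Dk_c$ appearing in $\partial_t\Xi(\Phi)$), and then invoking a smoothness-of-fixed-point argument. Your formula $\psi_t = K_c^{-1}(\pi_c\varphi_t K)$ can be made to work along the same lines, but you still owe the reader the boundedness of $A_c$ and the $t\downarrow 0$ analysis of $G_{c,t}$; ``inherited'' does not cover it.
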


\begin{remark} If we assume that $k_c$ is $C^n$ instead of $C^{n+1}$, then the vector field $R$ on $X_c$ would only be $C^{n-1}$.
\end{remark}
\begin{remark}
We note that we again assume that the non-linearity $g$ is bounded in $C^n$. We want to remark that this is a bound on the non-linearity of the ODE $\dot{x} = Ax + g(x)$, and not on the non-linearity of the \taumap.
\end{remark}
\begin{remark} Similar to how we generalized \cref{MainTheorem} to dynamical systems with parameters in \cref{ParametersTheorem}, we can generalize \cref{ODETheorem} to ODEs with parameters. That is, we expand our ODE to include the parameters as variables, i.e., we consider $(\dot{\lambda}, \dot{x}) = (0, Ax + C\lambda + g(\lambda,x))$. Then the proof would be a combination of the proofs of \cref{ParametersTheorem,ODETheorem}.
\end{remark}

\subsection{Obtaining the conjugacy}

As we mentioned before, we want to construct the \tmaps\ of a conjugate vector field on the center subspace. Instead of giving a single long proof of \cref{ODETheorem}, we will split it into several lemmas from which \cref{ODETheorem} will follow. As a starting point, we assume in \cref{ODETheorem} that for a specific time $\tau>0$ the \taumap\ satisfies the conditions of \cref{MainTheorem}. From now on, $\tau > 0$ will always refer to the specific time from \cref{ODETheorem}.

\begin{lemma}\label{Lemma1}
Under the conditions of \cref{ODETheorem}, there exists a $C^n$ conjugacy $K:X_c \to X$ and $C^n$ discrete dynamical system $\Psi_\tau = \exp(A_c \tau) + \psi_\tau : X_c \to X_c$ such that
\begin{align*}
\varphi_\tau \circ K = K \circ \Psi_\tau.
\end{align*}
Furthermore, $K$ and $\Psi_\tau$ have the following properties:
\begin{enumerate}[label=\Alph*)]
\item The dynamical system $\Psi_\tau =\exp(A_c \tau) + \psi_\tau$ is globally invertible and 
\begin{align*}
\psi_\tau &\in \left\{ h \in C^{n}_b(X_c,X_c) \mid h(0)=0 , \    Dh(0) = 0 , \text{ and } \supnorm{Dh} \le L_r \right\}. 
\end{align*}
\item The conjugacy $K{}{}$ is given by
\begin{align*}
K{}{} = \iota +  \begin{pmatrix}  k_c \\ k_u \\ k_s \end{pmatrix}
\end{align*}
with $\iota: X_c \to X$ the inclusion map and
\begin{align*}
k_u &\in \left\{ h \in C^{n}_b(X_c,X_u) \mid h(0)=0 , \ Dh(0) = 0 \text{ and }  \supnorm{Dh} < L_u \right\}, \\ 
k_s &\in \left\{ h \in C^{n}_b(X_c,X_s) \mid h(0)=0 , \      Dh(0) = 0 \text{ and } \supnorm{Dh} < L_s\right\}. 
\end{align*}
\end{enumerate}
\end{lemma}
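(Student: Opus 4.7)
The plan is to apply Theorem \ref{MainTheorem} directly to the time-$\tau$ map $\varphi_\tau$, viewing it as a discrete dynamical system on $X$. By Proposition \ref{FlowExistence}, we may write
\begin{align*}
\varphi_\tau(x) = \exp(A\tau)\,x + G_\tau(x),
\end{align*}
so the linearization of $\varphi_\tau$ at $0$ is $\exp(A\tau)$ and its non-linearity is $G_\tau$. To invoke Theorem \ref{MainTheorem} we must verify its four hypotheses for the triple $(\exp(A\tau), G_\tau, k_c)$ in place of $(A, g, k_c)$.

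First I would check the subspace hypothesis. Since the closed splitting $X = X_c \oplus X_u \oplus X_s$ from assumption 1 of Theorem \ref{ODETheorem} is $A$-invariant (on $\mathcal{D}(A)$), it is invariant under $\exp(A\tau)$, and the block-diagonal form of $\exp(A\tau)$ recorded in that assumption shows that the parts are $\exp(A_c\tau)$, $\exp(A_u\tau)$, $\exp(A_s\tau)$. If the given norm on $X$ does not already split as a max-norm over this decomposition, I would invoke Remark \ref{MainTheoremEquivalentNorm} to replace it by an equivalent norm that does, so condition 1a of Theorem \ref{MainTheorem} holds without altering the operator norms on the individual subspaces. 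The invertibility of $\exp(A_c\tau)$ and $\exp(A_u\tau)$ is precisely assumption 2 of Theorem \ref{ODETheorem}, and the spectral gap inequalities needed for Theorem \ref{MainTheorem}'s condition 3 are exactly assumption 3 of Theorem \ref{ODETheorem}.

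Next I would verify the non-linearity condition. By Proposition \ref{DerivativeNonLinearSpace}, the fact that $g \in C^n_b(X,X)$ with $g(0)=0$, $Dg(0)=0$ and $\supnorm{Dg} < L_g$ gives that $G_\tau \in C^n_b(X,X)$, $G_\tau(0)=0$, $DG_\tau(0)=0$, and $\supnorm{DG_\tau} \le L_G(\supnorm{Dg}, \tau) \le L_G(L_g,\tau)$ (monotonicity in the first argument is immediate from the definition \cref{definitionLGEquation}). Since $L_G(L_g,\tau)$ and $L_c$ are assumed to be small enough in the sense of Theorem \ref{MainTheorem}, the smallness condition 4 is met. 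Applying Theorem \ref{MainTheorem} yields $K = \iota + (k_c, k_u, k_s)^T$ and $\Psi_\tau = \exp(A_c\tau) + \psi_\tau$ together with the conjugacy $\varphi_\tau \circ K = K \circ \Psi_\tau$, and the properties A) and B) claimed for $\psi_\tau$, $k_u$, $k_s$ (boundedness, vanishing at $0$ with vanishing derivative, and the bounds $L_r, L_u, L_s$) are precisely the corresponding output of Theorem \ref{MainTheorem}.

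The main obstacle I anticipate is bookkeeping around the non-linearity bound: one has to be sure that the quantity $L_G(L_g,\tau)$ plays exactly the role of $L_g$ in the hypothesis of Theorem \ref{MainTheorem} applied to $\varphi_\tau$, and that the smallness thresholds for the pair $(L_G(L_g,\tau), L_c)$ in Theorem \ref{ODETheorem} are interpreted with respect to the linear data $(\exp(A_c\tau), \exp(A_u\tau), \exp(A_s\tau))$ rather than $(A_c, A_u, A_s)$. The higher regularity $k_c \in C^{n+1}_b$ is not needed for this lemma (only $C^n$ is used here; the extra derivative will be consumed later when passing from $\Psi_\tau$ to the vector field $R$), so I would note this parenthetically and postpone its use to the subsequent lemmas in the decomposition of the proof of Theorem \ref{ODETheorem}.
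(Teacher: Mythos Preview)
Your proposal is correct and follows essentially the same route as the paper: apply Theorem~\ref{MainTheorem} to the discrete map $\varphi_\tau$, using Propositions~\ref{FlowExistence} and~\ref{DerivativeNonLinearSpace} to verify the hypotheses. The paper's proof is a one-line reference to exactly these ingredients, whereas you spell out the verification of each condition; the content is the same.
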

\begin{proof}
From \cref{FlowExistence,DerivativeNonLinearSpace} and the assumptions of \cref{ODETheorem} it follows that $\varphi_\tau$ together with $k_c$ satisfy the assumptions of \cref{MainTheorem}, from which the lemma follows.
\end{proof}

As mentioned already, we want to construct the \tmaps\ of a conjugate vector field on the center subspace. For the \tmap\ of the vector field $f$ on $X$, we will prove the existence of a conjugate discrete dynamical system $\Psi_t$ on the center subspace. We will then show that the dynamical systems $\Psi_t$ are in fact the \tmaps\ of a conjugate vector field on the center subspace.

\begin{lemma}\label{Lemma2}
Under the conditions of \cref{ODETheorem}, there exists a $T > 0$ such that for all $t \in [0,T)$ there exists a conjugate dynamical system $\Psi_t :X_c \to X_c$ such that $\varphi_t \circ K = K \circ \Psi_t$.
\end{lemma}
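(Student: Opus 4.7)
The approach I would take is to apply the discrete parameterization theorem \cref{MainTheorem} to the \tmap\ $\varphi_t$ for $t$ in an open neighborhood of $\tau$, and then extend to $t$ near zero via the semigroup structure of the flow. First, I would verify that the hypotheses of \cref{MainTheorem} depend continuously on $t$: by standard estimates on analytic semigroups generated by sectorial operators, the operator norms $\operatornorm{\exp(A_c t)^{\pm 1}}$, $\operatornorm{\exp(A_u t)^{-1}}$ and $\operatornorm{\exp(A_s t)}$ vary continuously with $t$, and by \cref{DerivativeNonLinearSpace}, the bound $L_G(\supnorm{Dg}, t)$ on $\supnorm{DG_t}$ is continuous in $t$ as well. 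Since all the strict inequalities in the hypotheses of \cref{ODETheorem} hold at $t = \tau$, they continue to hold for every $t$ in an open interval $I := (\tau - \delta, \tau + \delta)$, for some $\delta > 0$.

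For each $t \in I$, \cref{MainTheorem} then produces a conjugacy $\hat K_t = \iota + (k_c, \hat k_{u,t}, \hat k_{s,t})$ and discrete dynamical system $\hat R_t = \exp(A_c t) + \hat r_t$ satisfying $\varphi_t \circ \hat K_t = \hat K_t \circ \hat R_t$. The crucial identification to establish is $\hat K_t = K$, with $K$ from \cref{Lemma1}. I would argue this by showing that $\hat K_t(X_c)$ is both $\varphi_t$-invariant (by construction) and $\varphi_\tau$-invariant (since $\varphi_\tau$ commutes with $\varphi_t$, so $\varphi_\tau(\hat K_t(X_c))$ is another $\varphi_t$-invariant manifold parameterized with the same $k_c$), and then invoking the uniqueness of the parameterization with prescribed $k_c$ coming from the fixed-point construction in \cite{Hetebrij19}. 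With $\hat K_t = K$ in hand, setting $\Psi_t := \hat R_t$ yields $\varphi_t \circ K = K \circ \Psi_t$ for every $t \in I$.

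To cover $t$ near zero, I would use the semigroup identity $\varphi_{t+\tau} = \varphi_\tau \circ \varphi_t$. For $t \in [0, \delta)$, we have $t + \tau \in I$, so $\Psi_{t+\tau}$ is already constructed. Since $\Psi_\tau$ is globally invertible by \cref{Lemma1}, define $\Psi_t := \Psi_\tau^{-1} \circ \Psi_{t+\tau}$. The conjugacy then follows: $\varphi_\tau$ restricted to the invariant manifold $K(X_c)$ is conjugate to $\Psi_\tau$ via $K$, so a direct computation gives
\[
K \circ \Psi_t = K \circ \Psi_\tau^{-1} \circ \Psi_{t+\tau} = (\varphi_\tau|_{K(X_c)})^{-1} \circ \varphi_{t+\tau} \circ K = \varphi_t \circ K.
\]
Taking $T := \tau + \delta$ completes the construction.

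The main obstacle is the identification $\hat K_t = K$ in the second step, which amounts to a uniqueness statement for the parameterization with prescribed $k_c$. A possible alternative is to define $\Psi_t := (\iota + k_c)^{-1} \circ \pi_c \circ \varphi_t \circ K$ directly and verify the conjugacy equation by establishing that $K(X_c)$ is $\varphi_t$-invariant for $t \in I$; this reduces the matter to an invariance argument for the center manifold under the flow, again ultimately relying on uniqueness.
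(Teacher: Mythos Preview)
Your overall strategy is plausible but takes a genuinely different route from the paper, and the step you flag as ``the main obstacle'' is indeed a real gap.

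The paper never applies \cref{MainTheorem} to $\varphi_t$ for $t\neq\tau$. Instead it keeps $\varphi_\tau$ as the sole reference system and, for each $t\ge 0$, considers $K_t := \varphi_t\circ K\circ\exp(-A_c t)$ together with $R_t := \exp(A_c t)\,\Psi_\tau\circ\exp(-A_c t)$. A one-line computation using the flow property shows $\varphi_\tau\circ K_t = K_t\circ R_t$, so $K_t$ is another conjugacy for $\varphi_\tau$. The paper then verifies that the center part $\tilde k_c := K_{t,c}-\operatorname{Id}$ satisfies $\supnorm{D\tilde k_c}<L_c$ for $t\in[0,T)$ (this is where $T$ comes from, via continuity at $t=0$), so the uniqueness results \cref{Prop62,Prop63} apply and produce a diffeomorphism $\zeta_t$ with $K\circ\zeta_t = K_t$; setting $\Psi_t := \zeta_t\circ\exp(A_c t)$ finishes. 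No separate argument for $t$ near $\tau$ versus $t$ near $0$ is needed.

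Your identification step $\hat K_t = K$ has an error in the invariance argument. You write that $\varphi_\tau(\hat K_t(X_c))$ is ``parameterized with the same $k_c$'', but the center component of $\varphi_\tau\circ\hat K_t$ is $\exp(A_c\tau)(\operatorname{Id}+k_c)+G_{c,\tau}(\hat K_t)$, not $\operatorname{Id}+k_c$. To invoke \cref{Prop63} you must first normalize by composing with $\exp(-A_c\tau)$ and then verify the resulting $\tilde k_c$ satisfies the smallness bound; but this perturbation is of size $O(\tau)$, not $O(|t-\tau|)$, so it is not automatically small. In effect you would end up doing the same $\tilde k_c$ estimate the paper does, only with the roles of $t$ and $\tau$ swapped and an unnecessary preliminary application of \cref{MainTheorem} to each $\varphi_t$. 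Your semigroup trick $\Psi_t := \Psi_\tau^{-1}\circ\Psi_{t+\tau}$ for small $t$ is correct and elegant, but the paper's organization makes it unnecessary: by normalizing with $\exp(-A_c t)$ and letting $t\to 0$, the verification works directly on $[0,T)$.
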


Before we prove the lemma, we will give the idea behind the construction of the conjugate dynamical systems $\Psi_t$. We want to solve the conjugacy equation $\varphi_t \circ K = K \circ \Psi_t$ where the conjugacy $K$ is given by the equation $\varphi_\tau \circ K = K \circ \Psi_\tau$. With the flow property of $\varphi_t$, we have
\begin{align*}
\varphi_\tau \circ (\varphi_t \circ K) = \varphi_t \circ ( \varphi_\tau \circ K) =( \varphi_t \circ K) \circ \Psi_\tau.
\end{align*}
Hence we see that $\varphi_t \circ K$ is also a conjugacy between $\varphi_\tau$ and $\Psi_\tau$. In \cite{Hetebrij19} we proved that the center manifold is unique under some mild conditions. In particular, we have that for any two conjugacies $K$ and $\tilde{K}$ there exists a diffeomorphism $\zeta :X_c \to X_c$ such that $K  \circ \zeta = \tilde{K}$. In our case, this means that we obtain diffeomorphisms $\zeta_t$ such that $ K \circ \zeta_t = \varphi_t \circ K$. Thus we have to prove that the conditions which imply a unique center manifold are satisfied. For the sake of completeness, we will repeat the uniqueness results from \cite{Hetebrij19}.

\begin{lemma}[Lemma 6.2 in \cite{Hetebrij19}]\label{Prop62} Let $F{}{}:X \to X$ and $k_c : X_c \to X_c$ satisfy the conditions of \cref{MainTheorem}. Then the conjugacy equation
\begin{align*}
F{}{} \circ \begin{pmatrix}\operatorname{Id} + k_c \\ k_u \\ k_s \end{pmatrix} = \begin{pmatrix} \operatorname{Id} + k_c \\ k_u \\ k_s \end{pmatrix} \circ (A_c + r) 
\end{align*}
 has a unique solution for $k_u \in C^0_b(X_c, X_u)$, $k_s \in C^0_b(X_c,X_s)$ and $r \in C^0_b(X_c, X_c)$  with the property that $A_c + r$ is a homeomorphism.
 \end{lemma}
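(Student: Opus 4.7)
My plan is to decompose the conjugacy equation along $X = X_c \oplus X_u \oplus X_s$ into three coupled scalar equations, one for each block, and to reformulate each as a fixed-point identity. Writing $K = \iota + (k_c, k_u, k_s)^T$ and using condition 2 of Main Theorem to invert $A_u$ and the homeomorphism hypothesis to form $T := (A_c + r)^{-1}$, the three blocks rearrange to $r = A_c k_c + g_c \circ K - k_c \circ (A_c + r)$ on the center, $k_u = A_u^{-1}(k_u \circ (A_c + r) - g_u \circ K)$ on the unstable, and $k_s = A_s (k_s \circ T) + g_s \circ K \circ T$ on the stable. Together these define a map $\Theta(k_u, k_s, r) = (\tilde k_u, \tilde k_s, \tilde r)$ on the closed subset of $C_b^0(X_c, X_u) \times C_b^0(X_c, X_s) \times C_b^0(X_c, X_c)$ consisting of tuples for which $A_c + r$ is a homeomorphism. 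Any $C_b^0$-solution of the conjugacy equation is automatically a fixed point of $\Theta$, so uniqueness follows at once once $\Theta$ is shown to be a contraction.

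For the contraction estimate, I would compute the differences of two candidate solutions in the sup-norm and exploit the spectral bounds of condition 3 together with the smallness of $L_g, L_c$ in condition 4. The unstable block contracts with factor $\|A_u^{-1}\|$, the stable block with factor $\|A_s\|$, and the implicit center block with factor $L_c$, the latter being strictly less than $1$ by the smallness in condition 4; the two spectral factors are strictly less than $1$ in the generic regime for center directions where $\|A_c\|, \|A_c^{-1}\| \ge 1$, since condition 3 with $\tilde n = 1$ then forces $\|A_u^{-1}\|, \|A_s\| < 1$. Cross couplings between the blocks, arising because $K$ depends on $(k_u, k_s)$ and because both $A_c + r$ and $T$ depend on $r$, are absorbed by the small Lipschitz constants $L_g$ and $L_c$.

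The chief obstacle is controlling the stable block: the inverse $T = (A_c + r)^{-1}$ differs between the two candidate solutions, and one must bound $\|k_s' \circ T - k_s' \circ T'\|_{C^0}$ for some $k_s' \in C_b^0$ that carries no a priori Lipschitz regularity. To close this gap, I would observe that any $C_b^0$-solution of the stable equation admits the geometric representation $k_s = \sum_{i \ge 0} A_s^{i} (g_s \circ K \circ T^{i+1})$ obtained by iterating the functional equation and using $\|A_s\| < 1$; since $g_s \circ K$ is Lipschitz (as the composition of the $C^n$ map $g_s$ with $K$, which is Lipschitz through $L_c$), this series yields an a posteriori Lipschitz bound on $k_s$ in terms of $L_g$, $L_c$ and the spectral data. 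With this regularity in hand, the troublesome cross term is dominated by $\mathrm{Lip}(k_s) \cdot \|r - r'\|_{C^0}$, the analogous cross terms for $k_u$ and $r$ are handled in the same way, and combining the estimates yields a strict contraction on $\Theta$, from which the claimed uniqueness follows.
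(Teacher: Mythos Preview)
The paper does not give its own proof of this lemma; it is quoted verbatim from \cite{Hetebrij19}, so there is no in-paper argument to compare against. I will therefore evaluate your plan on its own merits.

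Your block decomposition and fixed-point reformulation are the right starting point, and the contraction constants you identify for the diagonal terms (namely $\|A_u^{-1}\|$, $\|A_s\|$, and $L_c$) are correct. The genuine gap is in your treatment of the cross terms. You write that ``$g_s\circ K$ is Lipschitz (as the composition of the $C^n$ map $g_s$ with $K$, which is Lipschitz through $L_c$)'', but this is not justified: the constant $L_c$ bounds only $Dk_c$, while $K=(\operatorname{Id}+k_c,k_u,k_s)$ also contains the unknowns $k_u,k_s$, which are assumed merely $C^0_b$. Consequently $K$ carries no a priori Lipschitz bound, and your series $k_s=\sum_{i\ge 0}A_s^{\,i}(g_s\circ K\circ T^{i+1})$ does not yield a Lipschitz estimate for $k_s$: each summand involves $K$ (hence $k_u,k_s$) and $T$ (hence $r$, which through the center equation again depends on $k_u,k_s$), so the bootstrap is circular. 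The same obstruction blocks the corresponding estimate for the $k_u$ and $r$ cross terms.

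A clean way to repair the argument is to exploit the existence part of the Main Theorem: it produces one solution $(\bar k_u,\bar k_s,\bar r)$ that is $C^n$, hence Lipschitz with the explicit constants $L_u,L_s,L_r$. To prove uniqueness in $C^0_b$ it suffices to compare an arbitrary $C^0_b$ solution $(k_u',k_s',r')$ to this distinguished one. In every cross term you then split so that the \emph{Lipschitz} factor always falls on the barred solution, e.g.\ $\bar k_s\circ\bar T-\bar k_s\circ T'$ is bounded by $L_s\,\|\bar T-T'\|_0$, and $\|\bar T-T'\|_0$ is controlled by $\|\bar r-r'\|_0$ using the Lipschitz bound on $\bar r$. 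With this asymmetry the three block estimates close to a strict contraction on $\delta:=\max(\|\bar k_u-k_u'\|_0,\|\bar k_s-k_s'\|_0,\|\bar r-r'\|_0)$, forcing $\delta=0$.
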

\begin{lemma}[Proposition 6.3 in \cite{Hetebrij19}]\label{Prop63} Let $F{}{}:X \to X$ and $k_c, \tilde{k}_c : X_c \to X_c$ satisfy the conditions  of \cref{MainTheorem}. Then the image of $K{}{} = \iota + \tmatrix{ k_c}{k_u}{k_s}$ and $\tilde{K{}{}} = \iota +  \tmatrix{ \tilde{k}_c}{\tilde{k}_u}{\tilde{k}_s}$ are the same, for $K{}{},\tilde{K{}{}}$ the (unique) conjugacy obtained from \cref{MainTheorem}. In particular, there exists a diffeomorphism $\zeta : X_c \to X_c$ such that $K \circ \zeta = \tilde{K}$.
\end{lemma}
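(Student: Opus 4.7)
The plan is to produce an explicit diffeomorphism $\zeta : X_c \to X_c$ from the data $k_c, \tilde{k}_c$, set $\hat{K} := K \circ \zeta$ and $\hat{R} := \zeta^{-1} \circ R \circ \zeta$, and then invoke the uniqueness statement \cref{Prop62} to identify $(\hat{K}, \hat{R})$ with $(\tilde{K}, \tilde{R})$. Reading off the center component from $\tilde{K} = K \circ \zeta$ forces $\zeta$ to satisfy $(\operatorname{Id} + k_c) \circ \zeta = \operatorname{Id} + \tilde{k}_c$, so there is in fact no freedom in how to choose $\zeta$.

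First I would show that $\operatorname{Id} + k_c : X_c \to X_c$ and $\operatorname{Id} + \tilde{k}_c : X_c \to X_c$ are $C^n$ diffeomorphisms. The smallness assumption (condition \cref{MainTheoremKnownBounds } of \cref{MainTheorem}) ensures $\supnorm{Dk_c} < L_c < 1$, so for every $x \in X_c$ the map $y \mapsto x - k_c(y)$ is a contraction and $\operatorname{Id} + k_c$ has a $C^n$ global inverse; the same holds for $\operatorname{Id} + \tilde{k}_c$. Set $\zeta := (\operatorname{Id} + k_c)^{-1} \circ (\operatorname{Id} + \tilde{k}_c)$, which is a $C^n$ diffeomorphism of $X_c$ with $\zeta(0)=0$ and $D\zeta(0) = \operatorname{Id}$ (since both $k_c$ and $\tilde{k}_c$ vanish to first order at the origin).

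Next I would define $\hat{K} := K \circ \zeta$ and $\hat{R} := \zeta^{-1} \circ R \circ \zeta$. The conjugacy identity $F \circ K = K \circ R$ immediately gives $F \circ \hat{K} = \hat{K} \circ \hat{R}$. By construction the center component of $\hat{K}$ equals $\operatorname{Id} + \tilde{k}_c$, and $\hat{K} = \iota + \tmatrix{\tilde{k}_c}{k_u \circ \zeta}{k_s \circ \zeta}$ with bounded continuous off-center components. Writing $\hat{R} = A_c + \hat{r}$ with $\eta := \zeta - \operatorname{Id}$ and $\mu := \zeta^{-1} - \operatorname{Id}$, a short computation yields
\begin{align*}
\hat{r}(x) = A_c \eta(x) + r(\zeta(x)) + \mu\bigl(R(\zeta(x))\bigr).
\end{align*}
Since $k_c$ and $\tilde{k}_c$ are bounded, so are $\eta$ and $\mu$; $r$ is bounded by \cref{MainTheorem}; and $A_c \in \mathcal{L}(X_c, X_c)$. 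Thus $\hat{r} \in C^0_b(X_c, X_c)$. Moreover $A_c + \hat{r}$ is a homeomorphism as a composition of three homeomorphisms.

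Therefore both $(\hat{K}, \hat{R})$ and $(\tilde{K}, \tilde{R})$ solve the conjugacy equation with the same center component $\tilde{k}_c$ and both satisfy the hypotheses of \cref{Prop62}. By its uniqueness statement $\hat{K} = \tilde{K}$ and $\hat{R} = \tilde{R}$. In particular $\tilde{K} = K \circ \zeta$, so $\tilde{K}(X_c) = K(X_c)$ because $\zeta$ is a bijection, and the diffeomorphism in the statement of the lemma is precisely $\zeta$. The main technical point is the verification that $\hat{r}$ is globally bounded on $X_c$, which reduces, via the formula above, to the uniform boundedness of $\eta, \mu$, and $r$; once that is in hand the argument is essentially just the transport of \cref{Prop62} along the coordinate change $\zeta$.
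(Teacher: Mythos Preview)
The paper does not give its own proof of this lemma: it is quoted verbatim as Proposition~6.3 of \cite{Hetebrij19} and used as a black box, so there is nothing in the present paper to compare against. Your argument is correct and is essentially the canonical proof one would expect in \cite{Hetebrij19}: you are forced to take $\zeta = (\operatorname{Id}+k_c)^{-1}\circ(\operatorname{Id}+\tilde{k}_c)$ by matching center components, and then the uniqueness statement \cref{Prop62} finishes the job once you verify that $\hat{K}=K\circ\zeta$ and $\hat{R}=\zeta^{-1}\circ R\circ\zeta$ are admissible competitors. Your verification of boundedness of $\hat{r}$ is the only nontrivial point; it may be worth recording explicitly that $\eta(x)=\tilde{k}_c(x)-k_c(\zeta(x))$ and $\mu(y)=k_c(y)-\tilde{k}_c(\zeta^{-1}(y))$, which makes their boundedness immediate from the boundedness of $k_c$ and $\tilde{k}_c$, and that $L_c<1$ is indeed part of the smallness conditions (the paper invokes this in the proof of \cref{Lemma4}).
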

We are now ready to prove \cref{Lemma2}.
\begin{proof}[Proof of \cref{Lemma2}]
 We first note that from the invertiblity of $\exp(A_c \tau)$ it follows that $A_c$ is bounded. We have
\begin{align*}
A_c = \left( A_c \exp(A_c \tau) \right) \left( \exp(A_c \tau) \right)^{-1}.
\end{align*}
By definition, $\left( \exp(A_c \tau) \right)^{-1}$ is bounded and $A_c \exp(A_c \tau)$ is the time derivative of $t \mapsto \exp(A_c t)$ at $\tau > 0$, hence bounded as $A_c$ is a sectioral operator. So the right hand side is a bounded operator, thus $A_c$ is also a bounded operator. In particular, this means that $\exp(A_c t)$ exists for all $t\in \mathbb{R}$.

For $t \ge 0$ we define the conjugacy  $K_t \isdef \varphi_t \circ K \circ \exp(-A_c t) : X_c \to X$ and dynamical system $R_t \isdef \exp(A_c t) \Psi_\tau \circ \exp(-A_c t) : X_c \to X_c$. From the flow property, $\varphi_{t+s} = \varphi_t \circ \varphi_s$ it follows that
\begin{align*}
\varphi_\tau \circ K_t &= \varphi_\tau \circ \left( \varphi_t \circ K \circ \exp(-A_c t) \right) \\
				 &= \varphi_t \circ \varphi_\tau \circ  K \circ \exp(-A_c t) \\
				&= \varphi_t \circ K \circ \Psi_\tau \circ \exp(-A_c t) \\
				&= \left( \varphi_t \circ K \circ  \exp(-A_c t) \right) \circ \left( \exp(A_c t) \Psi_\tau \circ \exp(-A_c t) \right) \\
				&= K_t \circ R_t.
\end{align*}

As we already mentioned, we want to show that there exists a diffeomorphism  $\zeta_t :X_c \to X_c$ such that $\varphi_t \circ K \circ \exp(- A_c t) = K \circ \zeta_t$, or equivalent
\begin{align*}
\varphi_t \circ K = K \circ \zeta_t \circ \exp(A_c t).
\end{align*}
This would follow directly from \cref{Prop63} if its condition is satisfied. That is, $K_t$ should be the conjugacy obtained from \cref{MainTheorem} when we choose 
\begin{align*}
\tilde{k}_c \isdef  K_{t,c} - \operatorname{Id} =   \left( \exp(A_c t) + G_{c,t} \right) \circ  k_c \circ \exp(-A_c t) + G_{c,t} \circ \exp(-A_c t)
\end{align*}
instead of $k_c$ in \cref{MainTheorem}. We note that we need to check that $\tilde{k}_c$ satisfies the conditions of \cref{MainTheorem}, i.e.\ $\tilde{k}_c$ lies in $C_b^n$ and its derivative is bounded by $L_c$. We will then use \cref{Prop62} to show that $K_t$ is the conjugacy obtained by \cref{MainTheorem}, and hence we can apply \cref{Prop63}. Indeed, we have to show that 
\begin{enumerate}
\item The non-linearity $\tilde{k}_c$ lies in $C_b^n$ and its derivative is bounded by $L_c$.
\item The (un)stable parts of $K_t$ are bounded.
\item The linear part of $R_t$ is $\exp(A_c \tau)$, its non-linearity is bounded, and $R_t$ is a homeomorphism.
\end{enumerate}
\textbf{1)} We have
\begin{align*}
\tilde{k}_c = \left( \exp(A_c t) + G_{c,t} \right) \circ k_c \circ \exp(-A_ct) + G_{c,t} \circ \exp(-A_c t).
\end{align*}
Since all terms of the right hand side are $C^n$, we have that  $\tilde{k}_c$ is $C^n$. Furthermore, $\exp(A_ct)$ is a bounded linear operator, $k_c$ and $G_{c,t}$ are bounded functions in $C^n$, hence $\tilde{k}_c$ is bounded in $C^n$ norm. Finally, its derivative is given by
\begin{align*}
D\tilde{k}_c(x) &= \left(  \exp(A_c t)  + DG_{c,t} (k_c(\exp(-A_c t) x)) \right)  Dk_c( \exp(-A_c t) x) \exp(-A_c t)  \\
		& \quad +   DG_{c,t} (\exp(-A_ct )x ) \exp(-A_c t).
\end{align*}
We obtain the estimate
\begin{align*}
\supnorm{ D\tilde{k}_c} &\le \left( \operatornorm{\exp(A_c t)} + \supnorm{DG_t} \right) \supnorm{Dk_c} \operatornorm{\exp(-A_c t)} + \supnorm{DG_t} \operatornorm{\exp(-A_c t)} \\
					&\le \left( \operatornorm{\exp(A_c t)} + L_G(L_g,t)  \right) L_c \operatornorm{\exp(-A_c t)} + L_G(L_g,t) \operatornorm{\exp(-A_c t)}.
\end{align*}
We want to prove that the right hand side is bounded by $L_c$. From the definition of $L_G$ in \cref{DerivativeNonLinearSpace}, it follows that $s \mapsto L_G(L_g,s)$ is continuous and $L_G(L_g,0) = 0$. Furthermore, $s \mapsto \operatornorm{ \exp(A_cs)}$ is continuous for all $ s \in \mathbb{R}$ since $A_c$ is bounded. In particular, we have $\operatornorm{\exp(A_c 0)} = 1$. Thus for $t = 0$ we have $\supnorm{ D\tilde{k}_c} < L_c$ and by continuity there exists an interval $[0,T)$ such that $\supnorm{ D\tilde{k}_c} < L_c$ for all $t \in [0,T)$. \\
\ \\
 \textbf{2)} We note that $k_u$ is bounded and continuous, hence also
\begin{align*}
K_{u,t} = \exp(A_u t) k_u \circ \exp(-A_c t) + G_{u,t} \circ k_u \circ \exp(-A_c t)
\end{align*}
is bounded and continuous, as $\exp(A_u t)$ is a bounded operator and $G_{u,t}$ is bounded. Similar, we can show that $K_{s,t}$ is bounded and continuous. \\
\ \\
 \textbf{3)} From \cref{Lemma1} it follows that $\Psi_\tau = \exp(A_c \tau) + \psi_\tau $ is a homeomorphism, thus $R_t = \exp(A_c t) \Psi_\tau \circ \exp(-A_c t)$ is also a homeomorphism. Furthermore, $\exp( \pm A_c t)$ and $\exp(A_c \tau)$ commute, thus we have 
\begin{align*}
R_t &= \exp(A_c t) \exp(A_c \tau) \exp(-A_c t) + \exp(A_c t ) \psi_\tau \circ \exp(-A_c t) \\
&= \exp(A_c \tau) + \exp(A_c t) \psi_\tau \circ \exp(-A_c t).
\end{align*}
Thus the linear part of $R_t$ is $\exp(A_c \tau)$, and its non-linearity is bounded, since $\exp(A_c t)$ is a bounded linear operator and $\psi_\tau$ is bounded.

Now that we have proven the conditions for both \cref{Prop62,Prop63}, we apply \cref{Prop62,Prop63} to $K_t$ for all $t \in [ 0, T)$. That is, for $t \in [0,T)$ there exists a diffeomorphism $\zeta_t = \operatorname{Id} + \psi_t \circ \exp(- A_c t)$ such that
\begin{align*}
\varphi_t \circ K \circ \exp(-A_c t) = K_t =K \circ \zeta_t = K \circ ( \exp(A_c t) + \psi_t ) \circ \exp(-A_c t).
\end{align*}
Therefore, $K$ is the conjugacy between the \tmap\ $\varphi_t$ on $X$ and the dynamical system $\Psi_t \isdef \exp(A_c t ) + \psi_t$ on the center subspace for any $t \in [0,T)$.
\end{proof}

\subsection{Obtaining the vector field}

We now have a collection $(\Psi_t)_{t \in [0,T)}$ of dynamical systems on the center subspace, where each dynamical system $\Psi_t$ is conjugate with $\varphi_t$. We want to show that there exists a vector field $\dot{x} = A_c x + r(x)$ on the center subspace such that $\Psi_t$ is its \tmap\ for all $ t \in [0,T)$. Hence we first want to show that $t \mapsto \Psi_t$ is differentiable.

\begin{lemma}\label{Lemma3} Under the conditions of \cref{ODETheorem}, the function $(t,x) \mapsto \psi_t(x)$ is a fixed point of
\begin{align}
 \Xi : \Phi \mapsto \exp(A_c t) k_c(x) + G_{c,t}(K(x)) - k_c(\exp(A_c t) x + \Phi_t(x)). \label{SmoothnessConjugateFlow}
\end{align}
Furthermore, if $\Phi$ is differentiable with respect to time and $x \mapsto \frac{ \partial \Phi}{\partial t}(t,x)$ is $C^n$, then $\Xi(\Phi)$ has the same smoothness.
\end{lemma}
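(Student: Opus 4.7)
The plan is to establish both claims by direct computation. For the fixed-point identity I would unfold the conjugacy equation $\varphi_t \circ K = K \circ \Psi_t$ guaranteed by \cref{Lemma2} into components; for the smoothness claim I would differentiate $\Xi(\Phi)$ summand-by-summand in $t$ and inspect the regularity of each factor.

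For the first claim, decompose $\varphi_t = \exp(At) + G_t$, $K = \iota + \tmatrix{k_c}{k_u}{k_s}$ and $\Psi_t = \exp(A_c t) + \psi_t$, and project $\varphi_t \circ K = K \circ \Psi_t$ onto $X_c$. The center component of $\varphi_t(K(x))$ equals $\exp(A_c t) x + \exp(A_c t) k_c(x) + G_{c,t}(K(x))$, while the center component of $K(\Psi_t(x))$ equals $\exp(A_c t) x + \psi_t(x) + k_c(\exp(A_c t) x + \psi_t(x))$. Equating and cancelling the common linear piece $\exp(A_c t) x$ yields
\begin{align*}
\psi_t(x) = \exp(A_c t) k_c(x) + G_{c,t}(K(x)) - k_c\bigl(\exp(A_c t) x + \psi_t(x)\bigr),
\end{align*}
which is exactly $\psi_t = \Xi(\psi)_t$.

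For the smoothness claim, recall from the proof of \cref{Lemma2} that $A_c$ is bounded, so $t \mapsto \exp(A_c t)$ is analytic on $\mathbb{R}$ with time derivative $A_c \exp(A_c t)$. I would then differentiate $\Xi(\Phi)$ in $t$ term by term to obtain
\begin{align*}
\frac{\partial}{\partial t} \Xi(\Phi)(t,x) &= A_c \exp(A_c t) k_c(x) + \frac{\partial G_{c,t}}{\partial t}(K(x)) \\
    &\quad - Dk_c\bigl(\exp(A_c t) x + \Phi(t,x)\bigr) \cdot \Bigl( A_c \exp(A_c t) x + \frac{\partial \Phi}{\partial t}(t,x) \Bigr)
\end{align*}
and check that each summand is $C^n$ in $x$. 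The first is $C^{n+1}$ in $x$ since $k_c \in C^{n+1}$. For the second, note that because $\varphi_t$ solves $\dot \varphi_t(y) = A \varphi_t(y) + g(\varphi_t(y))$, its nonlinear part obeys $\partial_t G_t(y) = A G_t(y) + g(\exp(At) y + G_t(y))$, which (projected onto $X_c$, where $A_c$ is bounded) is jointly $C^n$ in $(t,y)$ by \cref{FlowExistence}; composing with the $C^n$ map $K$ gives a $C^n$ function of $x$. For the third summand, $Dk_c \in C^n$ by the $C^{n+1}$ regularity of $k_c$, its argument $\exp(A_c t) x + \Phi(t,x)$ is $C^n$ in $x$ (as $\Phi(t,\cdot)$ may be recovered in $C^n$ by integrating its $C^n$ time derivative), and the multiplying factor $A_c \exp(A_c t) x + \partial_t \Phi(t,x)$ is $C^n$ in $x$ by hypothesis.

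The main technical point is the loss-of-derivative occurring in the third summand: the time derivative of $k_c \circ \bigl(\exp(A_c t) x + \Phi_t(x)\bigr)$ features $Dk_c$, so under the weaker hypothesis $k_c \in C^n$ alone, $\partial_t \Xi(\Phi)$ would generically be only $C^{n-1}$ in $x$, and the scheme would fail to close. The assumption $k_c \in C^{n+1}$ built into \cref{ODETheorem} is chosen precisely to absorb this single loss of spatial regularity and to make $\Xi$ preserve the smoothness class in which we intend to locate $\psi$.
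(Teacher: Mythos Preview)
Your argument follows the same route as the paper's: deduce the fixed-point identity by projecting $\varphi_t\circ K = K\circ\Psi_t$ onto $X_c$, then differentiate $\Xi(\Phi)$ term by term and check that $k_c\in C^{n+1}$ absorbs the loss of one derivative in the third summand. That part is fine and matches the paper exactly.

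The one place where you are too quick is the second summand. You assert that $\partial_t G_{c,t}(y)=A_c G_{c,t}(y)+g_c(\varphi_t(y))$ is jointly $C^n$ in $(t,y)$ by \cref{FlowExistence}, but that proposition only yields joint smoothness on $(0,\infty)\times X$; the remark following it explicitly warns that $\varphi$ need not be differentiable at $t=0$ when $A$ is unbounded. Since we eventually need $\partial_t\Psi_t\big|_{t=0}$ to define the vector field $R$, differentiability at $t=0$ is not optional. The paper closes this gap by writing
\[
G_{c,t}(x)=\exp(A_c t)\int_0^t \exp(-A_c s)\,g_c(\varphi_s(x))\,\text{d}s
\]
and showing by a direct $\varepsilon$-$\delta$ argument that the difference quotient at $t=0$ converges to $g_c(x)$; the point is that on the center component only the bounded operator $A_c$ appears outside the integral, so one can exploit continuity of $s\mapsto\exp(-A_c s)g_c(\varphi_s(x))$ at $s=0$. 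Your formula gives the right limit formally, but you should supply this justification rather than invoke \cref{FlowExistence}, which does not cover $t=0$.
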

\begin{proof}
From \cref{Lemma2} it follows that $\psi_t$ satisfies
\begin{align*}
\begin{pmatrix}
\exp(A_c t) + G_{c,t} \\ \exp(A_u t ) + G_{u,t} \\ \exp(A_s t ) + G_{s,t} 
\end{pmatrix} \circ \begin{pmatrix} \operatorname{Id} + k_c  \\ k_u \\ k_s \end{pmatrix} 
= \begin{pmatrix} \operatorname{Id} + k_c  \\ k_u \\ k_s \end{pmatrix}  \circ ( \exp(A_c t) + \psi_t).
\end{align*}
In particular, from the center part it follows that $\psi_t$ satisfies
\begin{align}
\psi_t(x) = \exp(A_c t) k_c(x) + G_{c,t}(K(x)) - k_c(\exp(A_c t) x + \psi_t(x)) \label{DefinitionNonLinearityConjugateFlow}
\end{align} 
for $ t\in [0,T)$. Hence $\psi : [0,T) \times X_c \to X_c$ is a fixed point of $\Xi$.

Assume that $\Phi$ is differentiable with respect to time and $x \mapsto \frac{ \partial \Phi}{\partial t}(x)$ is $C^n$, then we want to show that all three terms in \cref{SmoothnessConjugateFlow}, namely $\exp(A_c t) k_c(x)$, $G_{c,t}(K(x))$ and $k_c(\exp(A_c t) x + \Phi_t(x))$ have the same smoothness as $\Phi$. \\
\ \\
\textbf{Term 1)}  Since $A_c$ is bounded, we have that $t \mapsto \exp(A_c t)$ is differentiable at all $t \in \mathbb{R}$ with derivative $A_c \exp(A_c t)$. Hence $t \mapsto \exp(A_c t)$ is differentiable with respect to time and its time derivative is $C^n$ as function on $X$.  \\
\ \\
\textbf{Term 2)} For the center part of $G_t$, we have
\begin{align*}
G_{c,t} = \exp(A_c t) \int_0^t \exp(-A_c s) g_c(\exp(As) x + G_s(x)) \ds.
\end{align*}
From \cref{FlowExistence} it follows that $G_{c,t}$ is differentiable at $t >0$ with derivative 
\begin{align*}
\frac{\partial G_{c,s}}{\partial s} \bigg|_{s = t} &= A_c G_{c,t} + g_{c} \circ \varphi_t,
\end{align*}
which is a $C^n$ function $X$. For the derivative at $0$ we derive, where we denote $H(s,x) = \exp(-A_c s) g_c( \exp(As)x + G_s(x))$,
\begin{align}
\frac{G_{c,t}(x) - G_{c,0}(x) - t g_c(x)}{t} &=  \frac{1}{t}\int_0^t \exp(A_c t) H(s,x) - g_c(x) \ds \nonumber \\
								&= \frac{\exp(A_c t) - \operatorname{Id}}{t} \int_0^t H(s,x)\ds   \label{DerivativeConjugateFlowEq1} \\
								&\quad +\frac{1}{t} \int_0^t \exp(-A_c s) g_c(\exp(As) x + G_s(x)) - g_c(x) \ds. \label{DerivativeConjugateFlowEq2}
\end{align}
We want to show that \cref{DerivativeConjugateFlowEq1,DerivativeConjugateFlowEq2} both go to $0$ when $t \downarrow 0$. This shows that $G_{c,t}$ is differentiable at $t=0$ with derivative $g_c$, which is $C^n$. We start with \cref{DerivativeConjugateFlowEq1}. We note that $H(s,x)$ is uniformly bounded on $[0,T] \times X_c$, and we denote its bound by $\mathcal{C}$. Then we obtain
\begin{align*}
\lim_{t \downarrow 0 }  \left\|  \frac{\exp(A_c t) - \operatorname{Id}}{t} \int_0^t H(s,x)\ds  \right\|_{X_c}
					&\le \lim_{t \downarrow 0 }  \frac{1}{t} \operatornorm{ 
					\exp(A_c t) - \operatorname{Id}} \int_0^t \left\| H(s,x)  \right\|_{X_c} \ds \\
					&\le \lim_{t \downarrow 0 }  \frac{1}{t} \operatornorm{ 
					\exp(A_c t) - \operatorname{Id}} \int_0^t \mathcal{C}  \ds \\
					&\le  \lim_{t \downarrow 0 }  \mathcal{C} \operatornorm{ 
					\exp(A_c t) - \operatorname{Id}}  \\
					&= 0.
\end{align*}
Thus \cref{DerivativeConjugateFlowEq1} goes to $0$ if $t \downarrow 0$. 

To analyze \cref{DerivativeConjugateFlowEq2}, fix $x \in X_c$ and let $\varepsilon > 0$. Then there exists a $\delta > 0$ such that $\norm{\exp(-A_c s)g_c(y) - g_c(x)}_{X_c} < \varepsilon$ if $s < \delta$ and $y \in B_\delta(x)$. Furthermore, there exists a $\gamma \in (0,\delta)$ such that $\exp(As)x + G_s(x) \in B_\delta(x)$ for all $s \in [0, \gamma)$ by continuity of $s \mapsto \exp(As)x + G_s(x)$. Hence for $t \in (0, \gamma)$ we have
\begin{align*}
\left\| \frac{1}{t} \int_0^t \exp(-A_c s) g_c(\exp(As) x + G_s(x)) - g_c(x) \ds \right\|_{X_c} \le \frac{1}{t} \int_0^t \varepsilon \ds = \varepsilon.
\end{align*}
Thus also \cref{DerivativeConjugateFlowEq2} goes to $0$ if $t \downarrow 0$. Hence $G_{c,t}$ is also differentiable at $t=0$, and its time derivative $g_c$ at $t=0$ is $C^n$ as function on $X_c$. \\
\ \\
\textbf{Term 3)} Finally, we have assumed that $k_c \in C^{n+1}(X_c, X_c)$, hence
\begin{align*}
\frac{\partial}{\partial t} k_c(\exp(A_c t) x + \Phi_t(x)) = Dk_c(\exp(A_c t) x + \Phi_t(x)) \left( A_c \exp(A_c t) x+ \frac{\partial \Phi_t}{\partial t} (x) \right)
\end{align*}
is $C^n$ as function on $X$.

Thus if $\Phi$ is differentiable with respect to time and $x \mapsto \frac{\partial \Phi}{\partial t}(t,x)$ is $C^n$, then so is $\Xi(\Phi)$.
\end{proof}

\begin{lemma}\label{Lemma4}
Under the conditions of \cref{ODETheorem}, the function $\Xi$ defined in \cref{SmoothnessConjugateFlow} is a contraction on the set
\begin{align*}
\left\{ \Phi \in C_b^0([0,T) \times X_c, X_c) \ \middle| \ \Phi_t(0) = 0  \text{ for all } t \in [ 0,T) \right\}.
\end{align*}
\end{lemma}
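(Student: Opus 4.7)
The plan is to exploit the fact that only the third summand in the definition of $\Xi$ depends on $\Phi$. Set
\begin{align*}
\mathcal{B} \isdef \left\{ \Phi \in C_b^0([0,T) \times X_c, X_c) \ \middle| \ \Phi_t(0) = 0 \text{ for all } t \in [0,T) \right\},
\end{align*}
equipped with the supremum norm. First I would verify that $\Xi$ maps $\mathcal{B}$ into itself. Continuity of $\Xi(\Phi)$ follows by composition, using continuity of $(t,x) \mapsto \exp(A_c t) x$, $(t,x) \mapsto G_{c,t}(K(x))$ (from \cref{FlowExistence}), and $k_c$. Boundedness follows since $k_c$, $K$, and $G_{c,t}$ are uniformly bounded (the last one on $[0,T)$, with $T$ finite from \cref{Lemma2}) and $\exp(A_c t)$ is a uniformly bounded operator on $[0,T)$. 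The vanishing condition is immediate: $K(0) = 0$ gives $G_{c,t}(K(0)) = G_{c,t}(0) = 0$ by \cref{DerivativeNonLinearSpace}, $k_c(0) = 0$, and $\Phi_t(0) = 0$ yields $k_c(\exp(A_c t) \cdot 0 + \Phi_t(0)) = k_c(0) = 0$.

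For the contraction estimate, take $\Phi^1, \Phi^2 \in \mathcal{B}$. The first two terms in $\Xi$ are independent of $\Phi$, so
\begin{align*}
\Xi(\Phi^1)(t,x) - \Xi(\Phi^2)(t,x) = k_c(\exp(A_c t) x + \Phi^2_t(x)) - k_c(\exp(A_c t) x + \Phi^1_t(x)).
\end{align*}
Applying the mean value theorem on the straight line between the two arguments and using the assumption $\supnorm{Dk_c} < L_c$ from condition 4 of \cref{ODETheorem}, we obtain
\begin{align*}
\norm{\Xi(\Phi^1)(t,x) - \Xi(\Phi^2)(t,x)}_{X_c} \le L_c \norm{\Phi^1_t(x) - \Phi^2_t(x)}_{X_c} \le L_c \supnorm{\Phi^1 - \Phi^2}.
\end{align*}
Taking suprema in $t$ and $x$ gives $\supnorm{\Xi(\Phi^1) - \Xi(\Phi^2)} \le L_c \supnorm{\Phi^1 - \Phi^2}$.

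Finally, one invokes that $L_c < 1$, which is implicit in the ``small enough'' requirement of \cref{MainTheorem} used in condition 4 of \cref{ODETheorem} (cf.\ Remark 2.4 of \cite{Hetebrij19}, where the admissible bounds for $L_c$ are strictly less than $1$). Therefore $\Xi$ is a contraction on the complete metric space $\mathcal{B}$. The only place I expect any subtlety is the bookkeeping confirming that $L_c < 1$ automatically holds in the setup we are working in; once this is noted, the contraction property is essentially a one-line consequence of the Lipschitz bound on $k_c$.
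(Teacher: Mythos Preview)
Your proof is correct and follows essentially the same approach as the paper: the paper's proof reduces to the single estimate $\norm{\Xi(\Phi) - \Xi(\tilde{\Phi})}_0 \le \supnorm{Dk_c} \supnorm{\Phi - \tilde{\Phi}}$ together with the observation that $L_c < 1$, which is precisely your contraction step. Your additional verification that $\Xi$ maps $\mathcal{B}$ into itself is a useful supplement that the paper leaves implicit.
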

\begin{proof}
Recall that $\supnorm{Dk_c} < L_c$, which is small enough in the sense of \cref{MainTheorem}. In particular, this means that $L_c < 1$. The result then follows from the estimate $\norm{\Xi(\Phi) - \Xi(\tilde{\Phi})}_0 \le \supnorm{Dk_c} \supnorm{\Phi - \tilde{\Phi}}$.
\end{proof}

\begin{lemma}\label{Lemma5}
Under the conditions of \cref{ODETheorem}, the fixed point $\Phi_\Xi$ of the contraction $\Xi$ is differentiable with respect to time and $x \mapsto \frac{\partial \Phi}{\partial t}(t,x)$ is $C^n$.
\end{lemma}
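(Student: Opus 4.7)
The plan is to upgrade the $C_b^0$-contraction of \cref{Lemma4} to a contraction on a smaller complete subspace whose elements automatically possess the required regularity, and then conclude by uniqueness of the fixed point in $C_b^0$. Concretely, I would introduce
\begin{align*}
\mathcal{S} \isdef \bigl\{ \Phi \in C^0_b([0,T)\times X_c, X_c) \mid \Phi_t(0)=0,\ \Phi(t,\cdot),\ \partial_t\Phi(t,\cdot)\in C^n_b(X_c,X_c)\text{ uniformly in }t \bigr\},
\end{align*}
equipped with the norm $\norm{\Phi}_\mathcal{S}\isdef\max\{\norm{\Phi}_n,\,\alpha\norm{\partial_t\Phi}_n\}$ for a small parameter $\alpha>0$ to be chosen below. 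Here $\norm{\cdot}_n$ stands for the supremum of the spatial $C^n$ norm taken uniformly over $t\in[0,T)$. One checks that $\mathcal{S}$ is complete in this norm. By \cref{Lemma3}, combined with the $C^{n+1}$-regularity of $k_c$ and the joint $C^n$-regularity of $K$ and of $(t,x)\mapsto G_{c,t}(x)$, the operator $\Xi$ maps $\mathcal{S}$ into itself: the spatial $C^n$-regularity of $\Phi$ carries over to $\Xi(\Phi)$ by direct composition estimates, and existence and $C^n$-in-$x$ regularity of $\partial_t\Xi(\Phi)$ is precisely the conclusion of \cref{Lemma3}.

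Next I would show that $\Xi$ is a contraction on $\mathcal{S}$. The $\norm{\Phi}_n$ component is handled by differentiating the identity in \cref{Lemma4} up to order $n$ in $x$, using that the highest-order term is of the form $Dk_c(\exp(A_ct)x+\Phi_t(x))\cdot D^n\Phi_t(x)$ and hence dominated by $\supnorm{Dk_c}<L_c<1$, while the Faà di Bruno cross-terms are controlled by $\supnorm{D^jk_c}$ for $j\le n+1$ and by lower-order norms of $\Phi-\tilde\Phi$. For the time-derivative component, direct differentiation gives
\begin{align*}
\partial_t\Xi(\Phi)(t,x) = A_c\exp(A_ct)k_c(x) + \partial_t G_{c,t}(K(x)) - Dk_c\bigl(\exp(A_ct)x+\Phi_t(x)\bigr)\bigl(A_c\exp(A_ct)x+\partial_t\Phi_t(x)\bigr),
\end{align*}
and subtracting the analogous expression for $\tilde\Phi$ yields a leading contribution bounded by $\supnorm{Dk_c}\,\norm{\partial_t\Phi-\partial_t\tilde\Phi}_n$ together with lower-order cross-terms involving $\norm{\Phi-\tilde\Phi}_n$ and bounded derivatives of $k_c$. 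Choosing $\alpha$ sufficiently small (and, if necessary, shrinking $T$) absorbs these cross-terms into a genuine contraction factor strictly less than one in $\norm{\cdot}_\mathcal{S}$. Banach's fixed point theorem then produces a unique fixed point $\Phi^*\in\mathcal{S}$. Since $\mathcal{S}\subset C^0_b([0,T)\times X_c,X_c)$, the uniqueness part of \cref{Lemma4} forces $\Phi^*=\Phi_\Xi$, so $\Phi_\Xi$ inherits the regularity encoded in $\mathcal{S}$.

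The main obstacle is organising the mixed derivative estimate cleanly: the $j$-th spatial derivative of $\partial_t\Xi(\Phi)$ produces chain-rule terms of the form $D^{j+1}k_c(\exp(A_ct)x+\Phi_t)\cdot(\text{products of }D^i\Phi_t)\cdot(A_c\exp(A_ct)x+\partial_t\Phi_t)$ in which spatial derivatives of $\Phi$ and of $\partial_t\Phi$ are coupled. The remedy is either to use a weighted norm with a different small weight at each derivative order, or to run the argument inductively on $j=0,1,\ldots,n$, so that at each stage the highest-order term is contracted by the sole factor $\supnorm{Dk_c}<L_c<1$ and all remaining terms are controlled by the estimates already established at lower orders.
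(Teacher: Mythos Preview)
Your approach is correct and is precisely the standard route for this kind of regularity upgrade; the paper itself gives no proof beyond a pointer to Lemma~3.2~i) of \cite{Hetebrij19}, which carries out exactly this type of argument (contraction on a subspace carrying the desired regularity, followed by uniqueness of the $C^0$ fixed point). So the two approaches coincide in spirit.

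Two technical points deserve a cleaner treatment than your sketch gives. First, the contraction estimate in $\norm{\cdot}_{\mathcal{S}}$ does not hold on all of $\mathcal{S}$: the cross-terms coming from $D^{j}k_c(\exp(A_ct)x+\Phi_t)$ evaluated at $\Phi$ versus $\tilde\Phi$ are bounded by quantities like $\norm{D^{j+1}k_c}_0\,\norm{D^i\tilde\Phi}_0\,\norm{\Phi-\tilde\Phi}_0$, so they depend on an a~priori bound on $\norm{\tilde\Phi}_n$ and $\norm{\partial_t\tilde\Phi}_n$. You therefore need to work on a fixed closed ball $B_M\subset\mathcal{S}$ and first check that $\Xi(B_M)\subset B_M$; this follows from $\supnorm{Dk_c}<1$ by the same computation, once $M$ is chosen large enough. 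Second, shrinking $T$ is not needed and would in fact be undesirable (the whole point of \cref{Lemma6} is to get the semigroup property on a fixed interval $[0,T)$); the weighted-norm or inductive argument you outline at the end is the right fix and does not require touching $T$.
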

\begin{proof}
The argument for the first part of the lemma is analogous to the argument used in the proof of Lemma 3.2 i) in \cite{Hetebrij19}.
\end{proof}

In particular, we have shown in \cref{Lemma3} that $(t,x) \mapsto \psi_t(x)$ is a fixed point of $\Xi$, and hence from \cref{Lemma4} it follows that it is the unique fixed point of $\Xi$. Therefore, it follows from \cref{Lemma5} that $t \mapsto \psi_t$ is differentiable and $x \mapsto \frac{\partial \psi_t}{\partial t}(x)$ is $C^n$. In particular, we can now prove that the \tmap\ of $\dot{x} = \frac{\partial \Psi_t}{\partial t}\big|_{t=0}(x)$ is given by $\Psi_t$ for all $t\in [ 0,T)$.

\begin{lemma}\label{Lemma6}
Under the conditions of \cref{ODETheorem}, the \tmap\ of the vector field $\dot{x} = R(x) \isdef \frac{\partial \Psi_t}{\partial t}\big|_{t=0}(x)$ on the center manifold is the map $\Psi_t$ defined in \cref{Lemma2} for all $t \in [0,T)$.
Moreover, for all $t \ge 0$, we have $\varphi_t \circ K = K \circ \Psi_t$, where $\Psi_t$ is the \tmap\ of $R$.
\end{lemma}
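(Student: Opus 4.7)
The plan is to combine the semigroup property of the family $\{\Psi_t\}_{t \in [0,T)}$ from \cref{Lemma2} with the time-differentiability of \cref{Lemma5} to identify $\Psi_t$ as the time-$t$ map of an autonomous vector field $R$, and then to upgrade the conjugacy from the short interval $[0,T)$ to all $t\geq 0$ by iterating small steps.

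The first step is to establish $\Psi_{s+t}=\Psi_s \circ \Psi_t$ whenever $s,t\geq 0$ and $s+t<T$. Applying the conjugacy of \cref{Lemma2} twice and using $\varphi_{s+t}=\varphi_s\circ \varphi_t$ gives
\[
K\circ \Psi_{s+t}=\varphi_{s+t}\circ K=\varphi_s\circ K\circ \Psi_t=K\circ \Psi_s\circ \Psi_t.
\]
To cancel $K$, I project onto the center component: since $K=\iota+(k_c,k_u,k_s)^T$, the $X_c$-part of this identity reads $(\operatorname{Id}+k_c)\circ \Psi_{s+t}=(\operatorname{Id}+k_c)\circ(\Psi_s\circ \Psi_t)$, and because $\supnorm{Dk_c}<L_c<1$ (as used in \cref{Lemma4}), $\operatorname{Id}+k_c$ is a global bi-Lipschitz homeomorphism of $X_c$, from which the semigroup identity follows.

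With the semigroup property in hand, I would set $R(x):=\partial_t\Psi_t(x)\big|_{t=0}=A_c x+\partial_t\psi_t(x)\big|_{t=0}$, which is well defined and $C^n$ in $x$ by \cref{Lemma5}. Differentiating $\Psi_{t+h}=\Psi_h\circ \Psi_t$ in $h$ at $h=0$ then yields $\frac{d}{dt}\Psi_t(x)=R(\Psi_t(x))$ for $t\in [0,T)$, so on $[0,T)$ each $\Psi_t$ is the time-$t$ map of $R$. Differentiating \cref{DefinitionNonLinearityConjugateFlow} in $t$ at $t=0$ and inverting $\operatorname{Id}+Dk_c(x)$ expresses the nonlinear part $r:=R-A_c$ explicitly in terms of $A_c$, $k_c$ and $g_c\circ K$; this representation makes it straightforward to check $r\in C^n_b(X_c,X_c)$ with $r(0)=0$ and $Dr(0)=0$ (using that $A_c$ is bounded, as established in the proof of \cref{Lemma2}). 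The ODE $\dot y=R(y)$ then has a unique global flow $\{\tilde\Psi_t\}_{t\in\mathbb{R}}$, and uniqueness of ODE solutions forces $\tilde\Psi_t=\Psi_t$ on $[0,T)$.

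To finish, I would extend the conjugacy to arbitrary $t\geq 0$ by picking $n\in\mathbb{N}$ with $t/n<T$ and iterating:
\[
\varphi_t\circ K=\varphi_{t/n}^n\circ K=K\circ \tilde\Psi_{t/n}^n=K\circ \tilde\Psi_t,
\]
where the middle equality uses $n$ applications of the short-time conjugacy of \cref{Lemma2} and the last equality is the semigroup property of the flow of $R$. The most delicate point is the first step: $K$ need not be globally injective, so cancellation is not free, and the argument exploits the specific form $K=\iota+(k_c,k_u,k_s)^T$ together with the smallness of $\supnorm{Dk_c}$. Once the semigroup property is in hand, the remainder is a standard transition from a short-time local semigroup of diffeomorphisms to the global flow of an autonomous vector field.
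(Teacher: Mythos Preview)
Your proposal is correct and follows essentially the same route as the paper: establish the semigroup identity $\Psi_{s+t}=\Psi_s\circ\Psi_t$ on $[0,T)$ from $K\circ\Psi_{s+t}=K\circ\Psi_s\circ\Psi_t$, deduce that $\Psi_t$ solves $\dot y=R(y)$, and then extend the conjugacy to all $t\geq0$ by iterating short steps.

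The one point where your argument differs slightly from the paper is the cancellation of $K$. You project onto the $X_c$-component and use directly that $\operatorname{Id}+k_c$ is a global homeomorphism because $\supnorm{Dk_c}<L_c<1$. The paper instead invokes \cref{Lemma3} and \cref{Lemma4}: the nonlinear part $\psi_{t+s}$ is the unique fixed point of the contraction $\Xi$ (whose contraction rate is precisely $\supnorm{Dk_c}$), so any $\Psi$ with $\varphi_{t+s}\circ K=K\circ\Psi$ and bounded nonlinear part vanishing at $0$ must equal $\Psi_{t+s}$; one then checks that $\Psi_s\circ\Psi_t$ has this form. These are two packagings of the same estimate, and your version has the advantage of not requiring a separate verification that $\Psi_s\circ\Psi_t-\exp(A_c(s+t))$ lies in the domain of the contraction. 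Either way the substance is identical.
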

\begin{proof}
It follows from \cref{Lemma3,Lemma4,Lemma5} that $x \mapsto \frac{\partial \varphi_t}{\partial t}(x)$ is well-defined and $C^n$ for all $t \in [0,T)$. Recall that $\Psi_t(x) = \exp(A_c t)x + \psi_t(x)$ and $A_c$ is bounded, hence $x \mapsto \frac{\partial \Psi_t}{\partial t}(x)$ is well-defined and $C^n$ for all $t \in [0,T)$. Thus we can define the $C^n$ map
\begin{align*}
R(x) \isdef  \frac{\partial \Psi_t}{\partial t} \bigg|_{t=0} (x) = A_c x + \frac{\partial \psi_t}{\partial t}(x)\bigg|_{t=0} (x).
\end{align*}
For $x \in X_c$, we want to show that $t \mapsto \Psi_t(x)$ is the orbit of $x$ under $\dot{x} = R(x)$ for $t \in [0,T)$. That is, we have to show that 
\begin{align*}
\frac{\partial }{\partial t} \Psi_t = R( \Psi_t) = \frac{ \partial \Psi_t}{\partial t} \bigg|_{t=0} ( \Psi_t) \qquad  \text{ for all } t \in[0,T).
\end{align*}
Equivalently, we want to show that $\Psi_t$ has the flow property on $[0,T)$, i.e.\ $\Psi_s \circ \Psi_t = \Psi_{t+s}$ for all $t,s \in [0,T)$ such that $t+s \in [0,T)$. Let $t,s \in [0,T)$ such that $t+s \in [0,T)$, then from \cref{Lemma3,Lemma4} it follows that $\psi_{t+s}$ is the unique fixed point of $\Xi$, and hence $\Psi_{t+s} = \exp(A_c (t+s)) + \psi_{t+s}$ is the unique solution of $\varphi_{t+s} \circ K = K \circ \Psi_{t+s}$. On the other hand, from the flow property $\varphi_s \circ \varphi_t = \varphi_{t+s}$ we obtain
\begin{align*}
K \circ \Psi_{t+s} = \varphi_{t+s} \circ K = \varphi_s \circ \varphi_t \circ K = \varphi_s \circ K \circ \Psi_t = K \circ \Psi_s \circ \Psi_t.
\end{align*}
Therefore, we obtain $\Psi_{t+s} = \Psi_s \circ \Psi_t$. Hence, the \tmap\ of $\dot{x} = R(x)$ is given by $\Psi_t$ defined in \cref{Lemma2} for all $t \in [0,T)$.

We still have to show that all orbits in $X_c$ given by $\dot{x} = R(x)$ exist for all $t \ge 0$ and that $\varphi_t \circ K = K \circ \Psi_t$ for all $t \ge 0$, where $\Psi_t$ is the \tmap\ of $R$. Since $\Psi_t : X_c \to X_c$ is well-defined for all $t \in [0,T)$, we can use the flow property of the \tmaps\ of $R$ to see that \tmap\ of $R$ exists for all $t \ge 0$. Furthermore, if $t \ge 0$, then there exists an $N \in \mathbb{N}$ such that $s = t - TN/2 \in [0,T/2)$, hence we have
\begin{align*}
\varphi_t \circ K &= \varphi_{s} \circ \underbrace{\varphi_{T/2} \circ \cdots \circ \varphi_{T/2}}_{N \text{ times}} \circ K = \varphi_{s} \circ \underbrace{\varphi_{T/2} \circ \cdots \circ \varphi_{T/2}}_{N - 1 \text{ times}} \circ K \circ \Psi_{T/2} \\
		&= \cdots =  K \circ \Psi_{s} \circ \underbrace{\Psi_{T/2} \circ \cdots \circ \psi_{T/2}}_{N \text{ times}} = K \circ \Psi_t. \qedhere
\end{align*}
\end{proof}

\subsection{Proof of \texorpdfstring{\cref{ODETheorem}}{Theorem 3.4}}

We can now finish the proof of \cref{ODETheorem}

\begin{proof}[Proof of \cref{ODETheorem}] 
It follows from \cref{Lemma6} that $K :X_c \to X$ maps the flow of $R :X_c \to X_c$ to the flow of $f{}{} : X \to X$. Furthermore, it follows from \cref{Lemma1} that $K$ has the desired properties. It follows from \cref{Lemma5} and \cref{DefinitionNonLinearityConjugateFlow} that $r$ is a fixed point of the contraction
\begin{align*}
\Theta : \xi \mapsto  A_c k_c (x) + A_c g_c(K(x)) - Dk_c(x)(A_c x + \xi(x)).
\end{align*}
Hence $r$ is bounded, and the set $\{ \xi \mid  \xi(0) = 0 \text{ and } D \xi(0) = 0\}$ is invariant under $\Theta$, thus both $r(0) = 0$ and $Dr(0) = 0$. Since $A_c$ is also bounded, we see that $\dot{x} = R(x) = A_c x + r(x)$ has forward and backward flow.
\end{proof}

Since the conjugate vector field $R$ has backward flow, we can show that the vector field $f$ has backward flow on the center manifold.

\begin{corollary} Under the conditions of \cref{ODETheorem}, there exists a forward and backward flow for $\dot{x} = Ax + g(x)$ on $K(X_c)$.
\end{corollary}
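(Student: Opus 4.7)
The plan is to transport the backward flow of $R$ from $X_c$ to $K(X_c)$ via the conjugacy $K$. By property \ref{PropertiesR} of \cref{ODETheorem}, the vector field $R = A_c + r$ on $X_c$ has both forward and backward flow; denote this flow by $\Psi_t$ for $t \in \mathbb{R}$. By \cref{Lemma6}, the conjugacy equation $\varphi_t \circ K = K \circ \Psi_t$ holds for all $t \geq 0$. The strategy is to use this relation to \emph{define} the backward flow of $f = A + g$ on $K(X_c)$ and then verify that the resulting curve actually solves the ODE.

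First I would extract the infinitesimal form of the conjugacy. Differentiating $\varphi_t(K(x)) = K(\Psi_t(x))$ with respect to $t$ at $t = 0$ (which is permitted for $x \in X_c$ because $K$ is $C^n$ and $t \mapsto \Psi_t(x)$ is differentiable by property \ref{PropertiesR}) gives the pointwise identity
\begin{align*}
f(K(x)) = DK(x)\, R(x) \qquad \text{for all } x \in X_c.
\end{align*}
Although $\varphi_t$ may fail to be differentiable at $t = 0$ in directions transverse to $K(X_c)$, the composition $t \mapsto K(\Psi_t(x))$ is differentiable in $t$ because of the factoring through $X_c$, and this is what matters for points of $K(X_c)$.

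Next, given $y_0 \in K(X_c)$, write $y_0 = K(x_0)$ for some (unique, by injectivity of $K = \iota + \tmatrix{k_c}{k_u}{k_s}$ on $X_c$) $x_0 \in X_c$, and define
\begin{align*}
y(t) \isdef K(\Psi_t(x_0)) \qquad \text{for all } t \in \mathbb{R}.
\end{align*}
For $t \geq 0$, \cref{Lemma6} gives $y(t) = \varphi_t(y_0)$, so $y$ agrees with the known forward flow. For arbitrary $t \in \mathbb{R}$, the chain rule together with the infinitesimal identity above yields
\begin{align*}
\dot{y}(t) = DK(\Psi_t(x_0))\, R(\Psi_t(x_0)) = f(K(\Psi_t(x_0))) = f(y(t)),
\end{align*}
so $y(\cdot)$ is a $C^1$ solution of $\dot{x} = Ax + g(x)$ defined for all $t \in \mathbb{R}$ with $y(0) = y_0$. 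This gives the required backward (and forward) flow on $K(X_c)$.

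The only mildly subtle point is that $A$ is unbounded, so in general one cannot differentiate $\varphi_t$ at $t = 0$ for arbitrary initial conditions. However, for $y_0 \in K(X_c)$ the parameterization provides a smooth $X_c$-side description $t \mapsto \Psi_t(x_0)$, and $K$ is $C^n$ as a map between Banach spaces, so the chain rule applies without any domain-of-$A$ issues; the resulting $\dot{y}(t) = f(y(t))$ is automatically an element of $X$ because $f(K(x_0)) = DK(x_0) R(x_0)$ exhibits it explicitly. Invariance of $K(X_c)$ under the flow follows because $\Psi_t$ maps $X_c$ to itself for every $t \in \mathbb{R}$.
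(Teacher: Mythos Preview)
Your argument is correct in outline but follows a genuinely different route from the paper.  The paper never writes down the infinitesimal identity $f\circ K = DK\cdot R$.  Instead it simply \emph{defines} $\varphi_{-t}(K(y_0)) := K(\Psi_{-t}(y_0))$ for $t>0$ and checks the group property $\varphi_t\circ\varphi_s=\varphi_{t+s}$ for all $t,s\in\mathbb{R}$ directly from the forward conjugacy and the flow property of $\Psi$.  Once the two‑parameter identity holds and agrees with the known forward flow for $t\ge 0$, the extension is automatically the flow of $f$: for any $t_0<0$ the shifted curve $s\mapsto K(\Psi_{t_0+s}(x_0))=\varphi_s\big(K(\Psi_{t_0}(x_0))\big)$ is a forward trajectory, hence solves the ODE for $s>0$ by \cref{FlowExistence}.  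This reduces every differentiability question to strictly positive times, where \cref{FlowExistence} applies without caveat.

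Your route---extracting $f(K(x))=DK(x)R(x)$ at $t=0$ and then verifying $\dot y=f(y)$ via the chain rule---is more explicit about the ODE, but the justification in your last paragraph is slightly circular: you invoke the identity $f(K(x_0))=DK(x_0)R(x_0)$ to argue that $f(K(x_0))$ makes sense, yet that identity is exactly what needs $K(x_0)\in\mathcal D(A)$ in the first place.  The clean way to close this is to note that since $t\mapsto K(\Psi_t(x))$ is differentiable at $t=0$ and $\frac{1}{t}G_t(K(x))\to g(K(x))$ (same estimate as in the proof of \cref{Lemma3}), the variation‑of‑constants formula forces $\tfrac{1}{t}(\exp(At)-\operatorname{Id})K(x)$ to converge, which is precisely the statement $K(x)\in\mathcal D(A)$.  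With that patch your argument is complete; the paper's group‑property argument simply avoids ever having to say this.
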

\begin{proof}
If $x_0 \in K(X_c)$, there exists an $y_0 \in X_c$ such that $x_0 = K(y_0)$. The forward flow satisfies
\begin{align*}
\varphi_t(x_0) = \varphi_t(K(y_0)) = K(\Psi_t(y_0)),
\end{align*}
thus $K(X_c)$ in invariant under the forward flow of $f{}{}$. Furthermore, we define the backward flow on $K(X_c)$ by 
\begin{align*}
\varphi_{-t}(x_0) = \varphi_{-t} ( K(y_0)) \isdef K ( \Psi_{-t}(y_0)).
\end{align*}
In particular, we have for all $t,s \in \mathbb{R}$
\begin{align*}
\varphi_t \circ \varphi_s  = \varphi_t \circ K \circ \Psi_s = K \circ \Psi_t \circ \Psi_s = K \circ \Psi_{t+s} = \varphi_{t+s}.
\end{align*}
Thus $\varphi_t$ has the flow property on the center manifold for all $t \in \mathbb{R}$, and in particular we have that $\varphi_t$ is the flow of $f{}{}$ for $t \ge 0$, hence $\varphi_t$ is the flow of $f{}{}$ for all $t \in \mathbb{R}$.
\end{proof}

\section{An application}\label{ExampleSection}

As an illustration of the parameterization method, we will prove the existence of a period doubling bifurcation in a two dimensional discrete dynamical system. Furthermore, we will give explicit regions in the phase space in which the period orbit will lie for small parameter values after the bifurcation. Finally, we also show the existence of heteroclinic orbits from the periodic orbit to the stationary point that the periodic orbit bifurcated from.

\subsection{A bifurcation in a in reaction-diffusion system}

We consider a reaction-diffusion equation with transport on the integer lattice given by
\begin{align}
\dot{u}_n = \underbrace{u_{n-1} - 2u_n + u_{n+1}}_{\text{Diffusion}} + \underbrace{(6 + \lambda) u_n + 3 u_n^2 - u_n^3 }_{\text{Reaction}} + \underbrace{ u_{n-1} -   u_n}_{\text{Transport}}. \label{RDT}
\end{align}
Here $n \in \mathbb{Z}$ and $\lambda$ is a bifurcation parameter. The coefficient $6 + \lambda$ in the reaction term is chosen so that a bifurcation happens at $\lambda = 0$. A stationary solution to \cref{RDT} is an orbit of the discrete dynamical system
\begin{align}
\begin{pmatrix}
u_{n-1} \\
u_{n} \\
\end{pmatrix} &\mapsto 
\begin{pmatrix}
u_{n} \\
u_{n+1} \\
\end{pmatrix} 
=
\begin{pmatrix}
u_{n} \\
- (3 + \lambda) u_n - 2 u_{n-1} - 3 u_n^2 + u_n^3
\end{pmatrix}. \label{True RDT-Equation}
\end{align}
We will prove that at $\lambda =0$ and $u=0$ there occurs a period doubling bifurcation in this map. Furthermore, we want to find computationally the parameter range for which we can prove that there exists a period $2$-orbit, and show that for these parameter values, there also exists a heteroclinic orbit between the origin and the period $2$-orbit. Hence we find a family of stationary solutions $(u_n)_{n \in \mathbb{Z}}$ of \cref{RDT} which limits to $0$ as $n \to -\infty$ and limits to a $2$-periodic profile as $n \to \infty$, or vice versa.

For $\lambda = 0$, the linearization of \cref{True RDT-Equation} at the origin is given by
\begin{align*}
\begin{pmatrix}
0 & 1 \\
-2  & -3
\end{pmatrix}
\end{align*}
which has eigenvalues $-1$ and $-2$. We apply the coordinate transformation $(u_{n-1},u_n) \mapsto (x,y) = (-2u_{n-1} - u_n, 2 u_{n-1}  +2 u_n)$ such that the linearization becomes diagonal, and \cref{True RDT-Equation} is equivalent to
\begin{align}
F{}{} : \begin{pmatrix}  x \\ y \end{pmatrix} &\mapsto   
							 \begin{pmatrix}  -1 & 0 \\  0 & -2 \end{pmatrix} \begin{pmatrix} x \\ y \end{pmatrix} + \begin{pmatrix}  g_c(\lambda, x, y) \\ g_u(\lambda , x , y) \end{pmatrix}, \label{RDT-Equation}
\end{align}
where we define
\begin{align*}
g_c(\lambda,x,y) &= - \left( (x+y)^3 - 3 (x+y)^2 - \lambda (x+y) \right),
 \\
g_u(\lambda,x,y) &= 2 \left( (x+y)^3 - 3 (x+y)^2 - \lambda (x+y) \right).
\end{align*}
If we want to apply \cref{ParametersTheorem}, we need to replace $g_c$ and $g_u$ with bounded $C^n$ functions. Assume for now that we replaced $g_c$ by $h_c$ such that $h_c$ is a uniformly bounded $C^n$ function with $h_c \equiv g_c$ on a neighborhood around $0$. Likewise, assume that $g_u$ is replaced by $h_u$ such that $h_u$ is uniformly bounded in $C^n$ and $h_u \equiv g_u$ on the same neighborhood around $0$. Let us denote this neighborhood by $\mathcal{B}$. Then we consider the dynamical system
\begin{align}
\tilde{F} : \begin{pmatrix} x \\ y \end{pmatrix} \mapsto \begin{pmatrix} -1 & 0 \\ 0 & - 2 \end{pmatrix} \begin{pmatrix} x \\y \end{pmatrix} + \begin{pmatrix} h_c(\lambda,x,y) \\ h_u(\lambda,x,y) \end{pmatrix}. \label{CutOffDynamics}
\end{align}
If we can apply \cref{ParametersTheorem} to \cref{CutOffDynamics} for a suitable choice of $k_c$ and parameter set $W \subset \mathbb{R}$, then the center manifold of \cref{CutOffDynamics} will be a local center manifold for \cref{RDT-Equation}. In fact, the intersection of the center manifold of \cref{CutOffDynamics} and $\mathcal{B}$ is a local center manifold of \cref{RDT-Equation}.

\subsection{Conditions on the linearization}

To apply \cref{ParametersTheorem} to \cref{CutOffDynamics}, we have to choose $k_c$ and a parameter set $W \subset \mathbb{R}$ for which the six conditions of the theorem hold. Conditions $4$ and $6$ depend not only on our choice of $k_c$ and $W$, but also on our choice of the functions $h_c$ and $h_u$, whereas the other conditions do not depend on the choices we make. Therefore, we will check conditions $1$, $2$ and $3$a of \cref{ParametersTheorem} first.

\textit{\underline{Condition 1:}} From the definition of $\tilde{F}$ in \cref{CutOffDynamics}, it follows that 
\begin{align*}
A = \begin{pmatrix} -1 & 0 \\ 0 & -2 \end{pmatrix}.
\end{align*}
Hence we have the invariant subspaces $X_c = \langle e_1 \rangle$ and $X_u = \langle e_2 \rangle$.

\textit{\underline{Condition 2:}} We have $A_c = -1$ and $A_u = -2$, which are both invertible.

\textit{\underline{Condition 3a:}} We have for $n \in \mathbb{N}$
\begin{align*}
\operatornorm{A_u^{-1}} \max\left\{ 1 , \operatornorm{A_c} \right\}^n = \frac{1^n}{2} < 1.
\end{align*}

\subsection{Normal forms}\label{NormalForms}

Before we choose $k_c$, $W$, $h_c$ and $h_u$ and apply \cref{ParametersTheorem} to \cref{CutOffDynamics}, we want to prove that there is  a period doubling bifurcation, and find where the periodic orbit is on the center subspace. This allows us to make an informed choice for $k_c$, $W$, $h_u$ and $h_c$. Assume for now that we can apply \cref{ParametersTheorem}, i.e.\ we have chosen $k_c$, $W$, $h_c$ and $h_u$ all sufficiently small. Then we obtain a conjugate dynamical system
\begin{align*}
R : W \times X_c \to X_c, x \mapsto -x + r_\lambda(x),
\end{align*}
such that $r(0,0) = 0$ and $\frac{\partial r}{\partial x}(0,0) = \frac{\partial r}{\partial \lambda}(0,0) = 0$. The conjugacy we obtain is given by
\begin{align*}
K : W \times X_c \to X, x \mapsto \begin{pmatrix} x + k_c(x) \\ k_{u,\lambda}(x) \end{pmatrix},
\end{align*}
such that $k_u(0,0) = 0$ and $\frac{\partial k_u}{\partial x}(0,0) = \frac{\partial k_u}{\partial \lambda}(0,0) = 0$.

It is clear from the definition of $\tilde{F}$ that the origin is always a fixed point for any $\lambda \in W$. Since we obtain $r$ and $k_u$ as fixed points of a contraction operator $\Theta$, they will lie in any invariant set of $\Theta$. In particular, we have that the set $\left\{ h : \mathbb{R} \times X_c \to \mathbb{R} \times X \ \middle| \ h(\lambda,0) = 0 \text{ for } \lambda \in W \right\}$ is invariant under $\Theta$ as the origin is a fixed point of $\tilde{F}$. Therefore, we have $r(\lambda,0) = 0$ and $k_u(\lambda,0) = 0$ for $\lambda \in W$. To prove the existence of the period-doubling bifurcation, we must have that $R$ is at least $C^3$, and satisfies the conditions
\begin{align*}
\frac{ \partial }{\partial \lambda} \frac{\partial R}{\partial x}(0,0) \neq 0 && \text{ and } && 2 \frac{\partial^3 R}{\partial x^3}(0,0) + 3 \left( \frac{\partial^2 R}{\partial x^2}(0,0)\right)^2 \neq 0,
\end{align*}
see for instance Theorem 4.3 in \cite{Kuznetsov98}.

Since $K$ and $R$ satisfy the conjugacy equation
\begin{align*}
\tilde{F}_\lambda \circ K_\lambda = K_\lambda \circ R_\lambda \qquad \text{for all }\lambda \in W,
\end{align*}
we can compute the higher order derivatives of $r$ and $k_u$ once we have chosen $k_c$. That is, we can solve the conjugacy equation order by order given the Taylor expansion of $k_c$.  In particular,  when we take $k_c$ independent of $\lambda$, we obtain an explicit formula for the partial derivative of $R$ with respect to $x$ at $(\lambda,0)$. We compute
\begin{align*}
\frac{\partial R}{\partial x}(\lambda,0) = -1 + \lambda + \frac{1}{2} \left( - 3 \lambda  - 1 +  \sqrt{ \lambda^2 + 6 \lambda + 1} \right).
\end{align*}
We use Taylor's Theorem to write
\begin{align*}
\sqrt{ \lambda^2 + 6 \lambda + 1} = 1 + 3 \lambda - o(\lambda^2).
\end{align*}
Thus we obtain
\begin{align}
\frac{\partial^2 R}{\partial \lambda \partial x}(0,0) &= \frac{\partial}{\partial \lambda} ( - 1 + \lambda +o(\lambda^2) ) = 1 \neq 0. \label{HigherOrderDerivativesTaylor2}
\end{align}
For the second and third derivative with respect to $x$ of $R$ at $(0,0)$, we write $k_c(x) = c_2 x^2 + o(x^3)$ and compute
\begin{align}
\begin{aligned}
\frac{\partial^2 R}{\partial x^2}(0,0) &= 6 - 4 c_2, \\
\frac{\partial^3 R}{\partial x^3}(0,0) &= -78 + 72 c_2 -24 c_2^2.
\end{aligned}\label{HigherOrderDerivativesTaylor}
\end{align}
For any choice of $c_2$ we have $2 \frac{\partial^3 R}{\partial x^3}(0,0) + 3 \left( \frac{\partial^2 R}{\partial x^2}(0,0)\right)^2 = - 48 \neq 0$. Therefore, \cref{CutOffDynamics} has a period-doubling bifurcation at $\lambda = 0$. However, this gives us no a priori information about the interval of existence or the location of the periodic orbit of \cref{CutOffDynamics}. To obtain this information of the $2$-periodic orbit or to prove the existence of heteroclinic orbits, we need an approximation of  both $R$ and $K$ as well as error bounds on our approximations.

\subsection{Taylor approximations}

As we see in \cref{HigherOrderDerivativesTaylor}, our choice of $k_c$ has influence on both the second and third derivative of $R$ with respect to $x$. The normal form of a period doubling bifurcation is $x \mapsto (-1+\lambda)x - c x^3 + o(x^4)$, so we take $k_c$ such that the second derivative of $R$ with respect to $x$ vanishes in the origin. To make the second derivative of $R$ vanish, i.e.\ $c_2 = \frac{3}{2}$, we take 
\begin{align*}
k_c \equiv \frac{3}{2} x^2
\end{align*}
locally near $0$. From \cref{HigherOrderDerivativesTaylor,HigherOrderDerivativesTaylor2}  we obtain the Taylor polynomial $P_R$ of $R$ around $(0,0)$:
\begin{align*}
P_R(\lambda,x) =  (- 1 + \lambda )x  - 4 x^3.
\end{align*}
We see that $P_R$ has the $2$-periodic orbit $x = \pm \sqrt{\lambda}/2$ for $\lambda > 0$. On the other hand, we could also have chosen $k_c = 0$, in which case the Taylor polynomial of $R$ would be $P(\lambda,x) = (- 1 + \lambda )x + 3 x^2 -13 x^3$. This means that we cannot find an analytic expression for the $2$-periodic orbit near $0$ of $P$. As a consequence, our analysis of the conjugate dynamics becomes harder when we choose $k_c = 0$.
Furthermore, we also compute the Taylor Polynomial $P_K$ of $k_u$ around $(0,0)$ for our choice of $k_c \equiv \frac{3}{2} x^2$ locally around $0$. We obtain
\begin{align*}
P_K(\lambda,x) = -2  \lambda x -2 x^2 + 8 x^3.
\end{align*}

\subsection{Error bounds on the conjugate dynamics}

As $P_R$ is a good approximation for the conjugate dynamics $R$, the periodic orbit of $R$ should lie close to the periodic orbit of $P_R$.  In fact, if we have a bound on the derivative of $R - P_R$, we can explicitly find an interval which contains the periodic orbit of $R$. This is the content of the following proposition.

\begin{proposition}\label{ErrorBoundsR} Let $n=3$ and assume that $k_c$, $W$, $h_c$ and $h_u$ are chosen such that \cref{ParametersTheorem} holds for \cref{CutOffDynamics}. Let $0 \le \mathcal{E}_R < 1$ and denote $\mathcal{I}_\lambda =  [-\sqrt{1 + \mathcal{E}_R}\sqrt{\lambda}/2 , \sqrt{1 + \mathcal{E}_R} \sqrt{\lambda}/2]$.
\begin{itemize}
\item[i)] If for all $x \in \mathcal{I}_\lambda$ we have the estimate $\norm{R(\lambda,x) - P_R(\lambda,x)}_X \le \mathcal{E}_R \lambda |x|$, then $R$ has a $2$-periodic orbit alternating between $W_-$ and $W_+$, where we define
\begin{align*}
W_- &\isdef [- \sqrt{1 + \mathcal{E}_R} \sqrt{\lambda} / 2,- \sqrt{1 - \mathcal{E}_R} \sqrt{\lambda} / 2], \\
W_+ &\isdef [ \sqrt{1 - \mathcal{E}_R} \sqrt{\lambda} / 2, \sqrt{1 + \mathcal{E}_R} \sqrt{\lambda} / 2]. 
\end{align*}
\item[ii)] If we have the stronger estimates $0 \le \mathcal{E}_R < 1/2$ and $\operatornorm{ D_x [ R- P_R](\lambda, x)] } \le \mathcal{E}_R \lambda$ for all $x \in \mathcal{I}_\lambda$, then the periodic orbit alternating between $W_-$ and $W_+$ is unique for $0 \le \lambda <4/3$ . Let us denote the periodic orbit by $x_{\pm} \in W_{\pm}$. Then all points $x \in (x_- , x_+)$ converge to the periodic orbit as $n \to - \infty$ and to the origin as $n \to \infty$.
\end{itemize}
\end{proposition}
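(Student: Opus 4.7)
For part (i) I would apply the intermediate value theorem to $\phi(x):=R^2(\lambda,x)-x$ on $W_+=[x_L,x_R]$, where $x_L:=\sqrt{1-\mathcal{E}_R}\sqrt{\lambda}/2$ and $x_R:=\sqrt{1+\mathcal{E}_R}\sqrt{\lambda}/2$. The central observation is that $P_R(\lambda,y)=-\mathrm{sgn}(y)\,|y|(1-\lambda+4y^2)$, and that $y\mapsto |y|(1-\lambda+4y^2)$ is strictly increasing on $\mathcal{I}_\lambda$ since its derivative $1-\lambda+12y^2$ is positive there. At the left endpoint $|P_R(\lambda,x_L)|=x_L(1-\mathcal{E}_R\lambda)$, so the assumed bound $|R-P_R|\le\mathcal{E}_R\lambda|x|$ forces $|R(\lambda,x_L)|\le x_L$; applying $P_R$ to that image and invoking the monotonicity gives $|P_R(\lambda,R(\lambda,x_L))|\le x_L(1-\mathcal{E}_R\lambda)$; absorbing one further error of size at most $\mathcal{E}_R\lambda|R(\lambda,x_L)|\le\mathcal{E}_R\lambda x_L$ produces $R^2(\lambda,x_L)\le x_L$, i.e.\ $\phi(x_L)\le 0$. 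The right endpoint is handled symmetrically to yield $\phi(x_R)\ge 0$, and IVT then delivers $x_+\in W_+$ with $R^2(\lambda,x_+)=x_+$. Setting $x_-:=R(\lambda,x_+)$, the inverse relation $R(\lambda,x_-)=x_+$ combined with the same monotonicity forces $|x_-|\in[x_L,x_R]$, and a direct estimate of $R(\lambda,x_+)$ shows $x_-<0$, so $x_-\in W_-$.

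For part (ii) the sharper Lipschitz hypothesis permits the chain rule. Since $D_xP_R(\lambda,x)=-1+\lambda-12x^2$, for $x\in W_+$ one has $12x^2\in[3(1-\mathcal{E}_R)\lambda,3(1+\mathcal{E}_R)\lambda]$, and combined with $\mathcal{E}_R<1/2$ this gives $|D_xR(\lambda,x)|\ge 1+(2-4\mathcal{E}_R)\lambda>1$. A parallel estimate at the image point $y=R(\lambda,x)$, whose size is pinned down by the analysis in part (i), yields $|D_xR(\lambda,y)|>1$ as well, and the range $\lambda<4/3$ is precisely what keeps this inequality strict after the error is absorbed. The chain rule then gives $D_xR^2(\lambda,x)>1$ on $W_+$, so $\phi$ is strictly increasing there and the fixed point $x_+$ is unique in $W_+$ (and symmetrically $x_-$ in $W_-$).

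The convergence description follows from the shape of $R^2$ on $(x_-,x_+)$: this map is monotone increasing with exactly three fixed points $x_-<0<x_+$. The origin is attracting, since for $\lambda<4/3$ and $\mathcal{E}_R<1/2$ the derivative bound gives $|D_xR(\lambda,0)|\le|{-1+\lambda}|+\mathcal{E}_R\lambda<1$ and hence $(R^2)'(0)<1$, while $x_\pm$ are repelling from part (ii). Consequently $\phi<0$ on $(0,x_+)$ and $\phi>0$ on $(x_-,0)$, so the forward $R^2$-orbit of any $x\in(x_-,x_+)\setminus\{0\}$ is monotone, trapped, and converges to $0$. Since $R$ is globally invertible by \cref{MainTheorem}, backward iteration is well-defined; the reversed inequalities force the $R^{-2}$-orbit of each such $x$ to be monotone and to converge to $x_+$ or $x_-$, and because $R$ exchanges the two sides of $0$ the full backward $R$-orbit alternates between neighbourhoods of $x_+$ and $x_-$, accumulating on the 2-cycle.

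The principal obstacle is the localisation of $x_-$ in part (i) --- verifying $|x_-|\in[x_L,x_R]$ rather than merely in a slightly wider interval --- which requires combining the inverse relation $R(\lambda,x_-)=x_+$ with the same monotonicity, plus careful bookkeeping of how the sup bound propagates through a single composition. Once $y\mapsto |y|(1-\lambda+4y^2)$ is known to be monotone on $\mathcal{I}_\lambda$, all endpoint comparisons reduce to routine algebra, and the same monotonicity drives the derivative estimates underpinning part (ii).
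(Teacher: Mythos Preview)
Your plan for part~(i) has a genuine gap at the right endpoint. You correctly show $|R(\lambda,x_L)|\le x_L$, which keeps the first iterate inside $\mathcal{I}_\lambda$ and lets you bound the second. But the situation at $x_R$ is \emph{not} symmetric: there one has $P_R(\lambda,x_R)=-x_R(1+\mathcal{E}_R\lambda)$, so $R(\lambda,x_R)\le -x_R$, with the lower bound $R(\lambda,x_R)\ge -x_R(1+2\mathcal{E}_R\lambda)$ lying strictly outside $\mathcal{I}_\lambda$. Since the hypothesis $\|R-P_R\|\le\mathcal{E}_R\lambda|x|$ is only assumed on $\mathcal{I}_\lambda$, you cannot control the second application of $R$ at $R(\lambda,x_R)$, and the inequality $\phi(x_R)\ge 0$ does not follow. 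The same escape problem undermines your localisation of $x_-$: from $x_-=R(\lambda,x_+)$ with $x_+\in W_+$ you only get $|x_-|\le x_+(1+2\mathcal{E}_R\lambda)$, which need not lie in $[x_L,x_R]$, and you cannot use the bound at $x_-$ to argue back since that would be circular. The issue then propagates into your chain-rule argument for part~(ii), where $D_xR$ must be evaluated at $R(\lambda,x)$ for $x\in W_+$.

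The paper avoids this by never composing $R$ forward twice. Instead it shows the covering relations $R(W_+)\supset W_-$ and $R(W_-)\supset W_+$ from the single-evaluation endpoint estimates $R(\lambda,\lambda_-)\ge-\lambda_-$ and $R(\lambda,\lambda_+)\le-\lambda_+$ (both computed inside $\mathcal{I}_\lambda$), then takes nested preimages $V_+\subset U_+\subset W_+$ with $R^2(V_+)=W_+$ to locate a fixed point of $R^2$; by construction its $R$-image lies in $W_-$. For part~(ii) the paper likewise works only with single applications of $R$: it shows $|R(\lambda,x)|<|x|$ on $(-\lambda_-,\lambda_-)$ directly, and for $x$ between $x_\pm$ and $\pm\lambda_-$ uses the mean value theorem on the segment $[x_-,x]\subset W_-$ (where the derivative bound is available) to show the orbit moves away from $x_\pm$. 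Uniqueness then comes for free, since a second period-2 orbit would have one point strictly inside $(x_-,x_+)$ and hence converge to $0$.
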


\begin{remark}
If we choose $k_c \equiv \frac{3}{2} x^2$ on $\mathcal{I}_\lambda$, then the Taylor Series of $R$ starts with $P_R$, i.e.\ $R(\lambda,x) - P_R(\lambda,x) = O(\lambda^2,\lambda x^2, x^4)$. Furthermore, if $x \in \mathcal{I}_\lambda$, we have that $x$ is of order $\sqrt{\lambda}$. Thus we have that $D_x[R- P_R](\lambda,x) = o(\lambda x , x^3) = o(\lambda^{3/2})$ for $x \in \mathcal{I}_\lambda$. Hence for $\lambda$ sufficiently small we may expect that $\mathcal{E}_R$ exists and is less than $1/2$. That is, we expect that the conditions of \cref{ErrorBoundsR} ii) are satisfied for $\lambda$ sufficiently small.
\end{remark}

\begin{proof}
 \textit{i)} We want to show that $R(W_\pm) \supset W_\mp$, as the existence of a periodic orbit then follows from the Intermediate Value Theorem. We define $\lambda_\pm \isdef \sqrt{1 \pm \mathcal{E}_R} \sqrt{\lambda}/2$.
\begin{enumerate}
\item First, consider the extreme case $R(\lambda,x) = P_R(\lambda,x) - \mathcal{E}_R \lambda x$. Then the periodic orbit of $R$ is $\pm \lambda_-$. This means that the conjugate dynamics $R$ maps $\lambda_-$ to, with $\gamma \in [-\mathcal{E}_R, \mathcal{E}_R]$,
\begin{align*}
R(\lambda,\lambda_-) &= P_R(\lambda,\lambda_-) + \gamma \lambda \lambda_- \\
					&= P_R(\lambda, \lambda_-) - \mathcal{E}_R\lambda \lambda_- +(\gamma + \mathcal{E}_R) \lambda \lambda_- \\
					& =- \lambda_- +(\gamma + \mathcal{E}_R) \lambda \lambda_-.
\end{align*}
As $\gamma + \mathcal{E}_R \ge 0$, we see that $R(\lambda,\lambda_-) \ge - \lambda_-$. Similarly, we have that $R(\lambda,\lambda_+) \le - \lambda_+$. Hence we have $W_- \subset R(\lambda,W_+)$.

\item As a consequence, there exists an interval $U_+ \subset W_+$ such that $R(\lambda,U_+) = W_-$.

\item Similarly to steps 1 and 2, we can find an interval $U_- \subset W_-$ such that $R(\lambda,U_-) = W_+$. Therefore, we can find an interval $V_+ \subset U_+$ such that $R(\lambda,V_+) = U_-$.
\end{enumerate}

Hence, we find an interval $V_+ \subseteq W_+$ such that $R^2(\lambda,V_+) = W_+$. Since $0 \not\in W_+$, $R^2$ has a non-trivial fixed point in $W_+$. Therefore, we see that the $2$-periodic orbit of $R$ alternates between $W_-$ and $W_+$.

\textit{ii)} For the second part of the proposition, we denote a $2$-periodic orbit with $x_\pm \in W_\pm$, i.e.\ $R(\lambda,x_\pm) = x_\mp$. We want to show that for all $x \in (x_-, x_+)$ the orbit of $x$ goes to the origin. First, we will show for all $x \in (-\lambda_-,\lambda_-)$ that the orbits of $x$ goes to the origin. In particular, we will show the stronger result that $|R(\lambda,x)| < |x|$ for any non-zero $x \in (-\lambda_-,\lambda_-)$. Let $\gamma = R(\lambda,x) - P_R(\lambda,x) \in [- \mathcal{E}_R\lambda, \mathcal{E}_R \lambda]$, then we write out what it means that $|R(\lambda,x)| < |x|$, where we distinguish four different cases:
\begin{align*}
\begin{cases} R(\lambda,x) > -x \iff \lambda x - 4 x^3 + \gamma x > 0 \iff \lambda  + \gamma > 4 x^2  \\
R(\lambda,x) < x \iff \lambda x - 4 x^3 + \gamma x < 2 x \iff \lambda  + \gamma < 2 \end{cases}&& \text{ if } x \in (0,\lambda_-), \\
\begin{cases}
R(\lambda,x) < -x \iff \lambda x - 4 x^3 + \gamma x < 0 \iff \lambda  + \gamma > 4 x^2 \\
R(\lambda,x) > x \iff \lambda x - 4 x^3 + \gamma x > 2 x \iff \lambda  + \gamma < 2
\end{cases}&& \text{ if } x \in (-\lambda_-,0).
\end{align*}
We have $\lambda + \gamma \ge \lambda - \mathcal{E}_R \lambda = 4 \lambda_-^2 > 4x^2$ and with $\mathcal{E}_R < 1/2$ and $\lambda < 4/3$ we also have $\lambda + \gamma\le  \lambda + \mathcal{E}_R \lambda < 3/2 \lambda < 2$. Therefore, if $x \in ( - \lambda_-, \lambda_-)$ we have that the orbit of $x$ goes to the origin.

Now, let $x \in (x_-, - \lambda_-]$. It follows from \cref{ParametersTheorem} that $R(\lambda,\cdot)$ is invertible and continuous, hence we have $x_+ = R(\lambda,x_-) > R(\lambda,x) > R(\lambda,0) = 0$. We want to show that $x$ moves away from the periodic orbit under one iteration of $R(\lambda,\cdot)$, i.e.\ we want to show that $x_+ - R(\lambda,x) > x - x_-$. We can enclose
\begin{align*}
D_x R(\lambda,y) \in [-1 -(2 + 4 \mathcal{E}_R) \lambda, - 1 - (2 - 4 \mathcal{E}_R) \lambda] .
\end{align*}
Since $\mathcal{E}_R < 1/2$, we have $D_x R(\lambda,y) < -1$ for all $y \in (x_-,x)$.  We obtain from the mean value theorem, with $y \in (x_-,x)$, that
\begin{align*}
x_+ - R(\lambda,x) = R(\lambda,x_-) - R(\lambda,x) = D_xR(\lambda,y) ( x_- -  x) > x - x_-.
\end{align*}
Therefore, we have that $x$ moves away from the periodic orbit $x_\pm$ after one iteration of $R(\lambda,\cdot)$. Analogous, we can show that for $x \in [\lambda_-,x_+)$ we obtain that $R(\lambda,x) \in ( x_-, 0)$ as well as that $x$ moves away from the periodic orbit $x_\pm$ under one iteration of $R(\lambda, \cdot)$. Together with the fact that all orbits starting in $(-\lambda_-,\lambda_-)$ go towards the origin, we conclude that orbits starting in $(x_-, - \lambda_-]$ and $[ \lambda_-,x_+)$ also go towards the origin. As we already mentioned, the dynamical system $R(\lambda,\cdot)$ is invertible and continuous, thus the orbit of $x \in (x_-,x_+)$ emerges from the periodic orbit. Hence we have shown that all points $x \in (x_-,x_+)$ converge to the periodic orbit as $n \to - \infty$ and to the origin as $n \to \infty$. 

We still have to show that the periodic orbit in $W_\pm$ is unique, so assume that there is another periodic orbit $x'_\pm \in W_\pm$. Then either $x'_- \in (x_-,x_+)$ or $x_- \in (x'_-,x'_+)$. Thus the orbit of either $x'_-$ or $x_-$ goes to the origin, which contradicts that both $x_\pm$ and $x'_\pm$ are $2$-periodic.
\end{proof}

\subsection{Error bounds on the conjugacy}

Similarly to how we found explicit error bounds on the $2$-periodic orbit of $R$ given explicit errors on its Taylor polynomial, we want to find an explicit rectangle in $\mathbb{R}^2$ in which the image of $K$ lies for all $x \in \mathcal{I}_\lambda$ given the Taylor polynomial $P_K$ of $k_u$. These are given by the following proposition.

\begin{proposition}\label{ErrorBoundsK} 
Let $n=3$ and assume that $k_c$, $W$, $h_c$ and $h_u$ are chosen such that \cref{ParametersTheorem} holds for \cref{CutOffDynamics}.  Furthermore, let $\mathcal{E}_R < 1/2$ and $\lambda < 4/3$ such that \cref{ErrorBoundsR} i) holds. Let $0 \le \mathcal{E}_K \le 9/2$ and $0 \le \lambda < 1/43 < 4/3$. If for all $x \in \mathcal{I}_\lambda$ we have the pointwise estimate $\norm{k_u(\lambda,x) - P_K(\lambda,x)}_X \le 2 \mathcal{E}_K \lambda |x|$, then we have
\begin{align}
k_u(\lambda, x) &\le  \frac{ \left(\sqrt{1 + 12 \lambda + 12 \mathcal{E}_K \lambda}-1\right)^2 \left(2 \sqrt{1 + 12 \lambda + 12 \mathcal{E}_K \lambda}  + 1 \right)}{216}, \label{SupKu} \\
 k_u(\lambda, x) &\ge- \frac{\lambda}{2}  \left( 1 + \mathcal{E}_R + 2 \mathcal{E}_R  \sqrt{(1 + \mathcal{E}_R) \lambda } + 2 \mathcal{E}_K \sqrt{(1 + \mathcal{E}_R) \lambda } \right), \label{InfKu}
\end{align} 
for all $x \in \mathcal{I}_\lambda$.
\end{proposition}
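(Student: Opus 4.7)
The plan is to use the pointwise estimate $\|k_u(\lambda,x) - P_K(\lambda,x)\|_X \le 2\mathcal{E}_K\lambda|x|$ to reduce each of \cref{SupKu} and \cref{InfKu} to an explicit one-variable extremization on $\mathcal{I}_\lambda$. With the auxiliary functions
\begin{align*}
g(x) := P_K(\lambda,x) + 2\mathcal{E}_K\lambda|x|, \qquad h(x) := P_K(\lambda,x) - 2\mathcal{E}_K\lambda|x|
\end{align*}
and $P_K(\lambda,x) = -2\lambda x - 2x^2 + 8x^3$, it suffices to compute $\sup_{\mathcal{I}_\lambda} g$ (for the upper bound) and $\inf_{\mathcal{I}_\lambda} h$ (for the lower bound). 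Both $g$ and $h$ are piecewise cubic, so a direct calculus analysis on each half-interval suffices.

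For the upper bound I would focus on the branch $x \le 0$, where $g(x) = -2(1+\mathcal{E}_K)\lambda x - 2x^2 + 8x^3$ is smooth. The unique negative critical point is
\begin{align*}
x_* = \frac{1 - \sqrt{1+12\lambda+12\mathcal{E}_K\lambda}}{12},
\end{align*}
and the critical-point identity $24x_*^2 - 4x_* = 2(1+\mathcal{E}_K)\lambda$ can be used to eliminate $\lambda$ from $g(x_*)$, yielding the clean closed form $g(x_*) = 2x_*^2(1 - 8x_*)$. Substituting $x_* = (1-u)/12$ with $u := \sqrt{1+12\lambda+12\mathcal{E}_K\lambda}$ produces exactly the right-hand side of \cref{SupKu}. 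It then remains to verify (a) $x_* \in \mathcal{I}_\lambda$, and (b) $g(x_*)$ dominates the competing candidate maxima: the endpoints $x_\pm = \pm v/2$ with $v := \sqrt{(1+\mathcal{E}_R)\lambda}$, and the roots of $24x^2 - 4x + 2(\mathcal{E}_K-1)\lambda$ on the positive branch when they exist. The same critical-point elimination gives $g(x_c) = 2x_c^2(1-8x_c)$ for any positive critical point $x_c$, and since $x_* < 0 < x_c$ one has both $x_*^2 \ge x_c^2$ and $1 - 8x_* \ge 1 - 8x_c$ in the allowed parameter window, so $g(x_c) \le g(x_*)$.

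For the lower bound I would argue that $\inf h$ is attained at the left endpoint $x_-$. A sign analysis of $h'(x) = 24x^2 - 4x + 2(\mathcal{E}_K-1)\lambda$ on $x \le 0$ shows that any interior critical point there is a local maximum (since $h''(x) = 48x-4 < 0$), while on $x \ge 0$ the only root of the corresponding $h'(x) = 24x^2 - 4x - 2(1+\mathcal{E}_K)\lambda$ lies beyond $1/12$, and hence outside $\mathcal{I}_\lambda$ once $\lambda < 1/43$. Consequently $\inf_{\mathcal{I}_\lambda} h = \min\{h(x_-), h(x_+)\}$, and a direct substitution using $v^2 = (1+\mathcal{E}_R)\lambda$ gives
\begin{align*}
h(x_-) - h(x_+) = -2\mathcal{E}_R\lambda v \le 0,
\end{align*}
so the infimum is $h(x_-) = -\tfrac{1+\mathcal{E}_R}{2}\lambda - \lambda v(\mathcal{E}_K+\mathcal{E}_R)$, which rearranges to \cref{InfKu}.

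The main obstacle is the case analysis supporting the two dominance/containment claims over the full allowed parameter window $0 \le \mathcal{E}_K \le 9/2$, $\mathcal{E}_R < 1/2$, $0 \le \lambda < 1/43$. The specific numerical threshold $\lambda < 1/43$ is used both to push the positive-branch critical point of $h'$ out of $\mathcal{I}_\lambda$ and, via the quantitative comparison between $\sqrt{1+12\lambda(1+\mathcal{E}_K)}$ and $\sqrt{1-12\lambda(\mathcal{E}_K-1)}$, to guarantee $|x_*| \ge x_c$ in the upper-bound comparison; once these two geometric facts are in hand, the two closed-form evaluations above give the claimed bounds directly.
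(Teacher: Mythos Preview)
Your approach is essentially the same as the paper's: both bound $k_u$ from above and below by the piecewise cubic functions $P_K(\lambda,x)\pm 2\mathcal{E}_K\lambda|x|$ and then optimize these over $\mathcal{I}_\lambda$, arriving at the same extremizers ($x_*=\tfrac{1}{12}(1-\sqrt{1+12\lambda+12\mathcal{E}_K\lambda})$ for the supremum and $x=-\lambda_+$ for the infimum) and the same closed-form values. The only difference is presentational: the paper states where the extrema lie and delegates the supporting case analysis (your items (a)--(b) and the endpoint comparison) to a Mathematica notebook, whereas you sketch the calculus by hand and isolate exactly which parameter-range inequalities need checking.
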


\begin{proof}
The computations in this proof are checked symbolically in the Mathematica Notebook available at \cite{Mathematica}. We have for $x \in \mathcal{I}_\lambda$ 
\begin{align*}
P_K(\lambda,x) - 2 \mathcal{E}_K \lambda x  \le k_u(\lambda,x) \le P_K(\lambda,x) + 2  \mathcal{E}_K \lambda x && \text{ if } x \ge 0, \\
P_K(\lambda,x) + 2 \mathcal{E}_K \lambda x  \le k_u(\lambda,x) \le P_K(\lambda,x) - 2  \mathcal{E}_K \lambda x && \text{ if } x \le 0,
\end{align*}
with $P_K(\lambda,x) = -2  \lambda x -2 x^2 + 8 x^3$. Hence we can bound the minimum of $k_u(\lambda,x)$ on $\mathcal{I}_\lambda$ by 
\begin{align}
\min \left\{ \min_{x \in [0,\lambda_+]} P_K(\lambda,x) - 2 \mathcal{E}_K \lambda x , \min_{x \in [-\lambda_+,0]} P_K(\lambda,x) + 2 \mathcal{E}_K \lambda x \right\} \label{MinimumK}.
\end{align}
 Under the constraints $0 \le \mathcal{E}_R < 1/2$, $0 \le \mathcal{E}_K \le 9/2$ and $0 \le \lambda < 1/43$, it turns out that the the minimum in \cref{MinimumK} is obtained at the boundary $x = -\lambda_+$, hence we have the following lower bound on $k_u(\lambda,x)$ for all $x \in \mathcal{I}_\lambda$:
\begin{align*}
k_u(\lambda,x) &\ge P_K(\lambda, - \sqrt{1 + \mathcal{E}_R} \sqrt{\lambda } /2 ) - 2  \mathcal{E}_K \lambda  \sqrt{1 + \mathcal{E}_R} \sqrt{\lambda }/2 \\
			&= - \frac{\lambda}{2}  \left( 1 + \mathcal{E}_R + 2 \mathcal{E}_R  \sqrt{(1 + \mathcal{E}_R) \lambda }+ 2 \mathcal{E}_K \sqrt{(1 + \mathcal{E}_R) \lambda } \right).
\end{align*}
Likewise, we can bound the maximum of $k_u(\lambda,x)$ on $\mathcal{I}_\lambda$ by
\begin{align}
\max \left\{ \max_{x \in [0,\lambda_+]} P_K(\lambda,x) + 2 \mathcal{E}_K \lambda x , \max_{x \in [-\lambda_+,0]} P_K(\lambda,x) - 2 \mathcal{E}_K \lambda x \right\} \label{MaximumK}.
\end{align}
The maximum of \cref{MaximumK} is obtained in $x = 1/12-\sqrt{1 + 12 \lambda + 12 \mathcal{E}_K \lambda } / 12$, hence for $x \in \mathcal{I}_\lambda$ we have the upper bound
\begin{align*}
k_u(\lambda,x) &\le P_K(\lambda, 1/12-\sqrt{1 + 12 \lambda + 12 \mathcal{E}_K \lambda } / 12 ) \nonumber \\
					&\quad - 2 \mathcal{E}_K \lambda \left( 1/12-\sqrt{1 + 12 \lambda + 12 \mathcal{E}_K \lambda } / 12 ) \right) \nonumber \\
					&=   \frac{ \left(\sqrt{1 + 12 \lambda + 12 \mathcal{E}_K \lambda}-1\right)^2 \left(2 \sqrt{1 + 12 \lambda + 12 \mathcal{E}_K \lambda}  + 1 \right)}{216}. \qedhere
\end{align*}
\end{proof}

\begin{corollary}\label{ErrorBoundsKc}
Under the assumptions of \cref{ErrorBoundsK} and the extra assumption that $k_c \equiv \frac{3}{2} x^2$ on $\mathcal{I}_\lambda$, the dynamical system \cref{CutOffDynamics} for parameter value $\lambda$ has a $2$-periodic orbit inside 
\begin{align*}
\mathcal{B}_\lambda = [\lambda_{c,-}, \lambda_{c,+}] \times [ \lambda_{u,-}, \lambda_{u,+}] ,
\end{align*}
where 
\begin{align*}
\lambda_{c,-} &\isdef  - \sqrt{ 1 + \mathcal{E}_R} \sqrt{\lambda}/2 + \frac{3(1 + \mathcal{E}_R) \lambda}{8}, \\
\lambda_{c,+} &\isdef  \sqrt{ 1 + \mathcal{E}_R} \sqrt{\lambda}/2 + \frac{3(1 + \mathcal{E}_R) \lambda}{8}, \\
\lambda_{u,-} &\isdef - \frac{\lambda}{2}  \left( 1 + \mathcal{E}_R + 2 \mathcal{E}_R  \sqrt{(1 + \mathcal{E}_R) \lambda } + 2 \mathcal{E}_K \sqrt{(1 + \mathcal{E}_R) \lambda } \right), \\
\lambda_{u,+} &\isdef \frac{ \left(\sqrt{1 + 12 \lambda + 12 \mathcal{E}_K \lambda}-1\right)^2 \left(2 \sqrt{1 + 12 \lambda + 12 \mathcal{E}_K \lambda}  + 1 \right)}{216}.
\end{align*}
\end{corollary}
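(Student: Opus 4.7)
The plan is to transport the $2$-periodic orbit of $R_\lambda$ guaranteed by \cref{ErrorBoundsR} through the conjugacy $K_\lambda$ from \cref{ParametersTheorem}, and then bound each coordinate of the image on $\mathcal{I}_\lambda$.

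First I would apply \cref{ErrorBoundsR}~i), whose hypotheses are included in the present assumptions, to obtain points $x_\pm \in W_\pm \subset \mathcal{I}_\lambda$ with $R_\lambda(x_\pm) = x_\mp$. The conjugacy equation $\tilde{F}_\lambda \circ K_\lambda = K_\lambda \circ R_\lambda$ then gives $\tilde{F}_\lambda(K_\lambda(x_\pm)) = K_\lambda(x_\mp)$, so $K_\lambda(x_-)$ and $K_\lambda(x_+)$ form a $2$-periodic orbit of $\tilde{F}_\lambda$. It therefore remains to verify that $K_\lambda(\mathcal{I}_\lambda) \subset \mathcal{B}_\lambda$ by bounding the two coordinates of $K_\lambda(x) = (x + k_c(\lambda,x),\, k_u(\lambda,x))$ separately for $x \in \mathcal{I}_\lambda$.

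For the center coordinate, the extra hypothesis $k_c \equiv \tfrac{3}{2}x^2$ on $\mathcal{I}_\lambda$ yields $(K_\lambda(x))_c = x + \tfrac{3}{2}x^2$. The derivative $1 + 3x$ is strictly positive on $\mathcal{I}_\lambda$ provided $\sqrt{1+\mathcal{E}_R}\sqrt{\lambda}/2 < 1/3$, which follows immediately from $\mathcal{E}_R < 1/2$ and $\lambda < 1/43$ (indeed $\sqrt{3/2}/(2\sqrt{43}) < 1/3$). Hence $x + \tfrac{3}{2}x^2$ is strictly increasing on $\mathcal{I}_\lambda$ and attains its extrema at the endpoints $\pm\sqrt{1+\mathcal{E}_R}\sqrt{\lambda}/2$; substitution reproduces exactly $\lambda_{c,-}$ and $\lambda_{c,+}$.

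For the unstable coordinate, the inequalities $\lambda_{u,-} \le k_u(\lambda, x) \le \lambda_{u,+}$ for all $x \in \mathcal{I}_\lambda$ are precisely the statements \cref{SupKu} and \cref{InfKu} proved in \cref{ErrorBoundsK}, so no further work is required. The only piece of genuine calculation is the elementary monotonicity check for the center coordinate; everything else is a direct composition of the preceding results, so I do not anticipate any real obstacle.
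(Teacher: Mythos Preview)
Your proof is correct and follows the same approach as the paper's own argument: the center interval is the image of $\mathcal{I}_\lambda$ under $x+\tfrac{3}{2}x^2$, and the unstable interval comes directly from \cref{ErrorBoundsK}. You have simply made explicit two points the paper leaves tacit---the use of the conjugacy to transport the periodic orbit from $R_\lambda$ to $\tilde{F}_\lambda$, and the monotonicity of $x+\tfrac{3}{2}x^2$ on $\mathcal{I}_\lambda$ needed to identify the image with $[\lambda_{c,-},\lambda_{c,+}]$---so no substantive difference.
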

\begin{proof}
The first interval of $\mathcal{B}_\lambda$ is the image of $\mathcal{I}_\lambda$ under $\operatorname{Id} + k_c \equiv x + \frac{3}{2}x^2$. The second interval of $\mathcal{B}_\lambda$ follows from \cref{ErrorBoundsK}.
\end{proof}

We note that we can obtain the location of the periodic orbit more precisely inside $\mathcal{B}_\lambda$. Since we know that the periodic orbit of $R$ lies inside $W_\pm$, the periodic orbit of \cref{CutOffDynamics} is contained in the image of $W_\pm$ under $K$. We are however not only interested in the periodic orbit, but also in heteroclinic connections between the origin and the periodic orbit, which is why we consider the image of $\mathcal{I}_\lambda$ instead.

\subsection{Periodic orbits and connections}

From \cref{ErrorBoundsKc} we find the box where the $2$-periodic orbits of \cref{CutOffDynamics} are. To prove the same periodic orbits for \cref{RDT-Equation}, we want that $h_c \equiv g_c$ and $h_u \equiv g_u$ on the boxes $\mathcal{B}_\lambda$ for some parameter interval $[0, \lambda_{\max}]$. Furthermore, we have to check that we can indeed find $\mathcal{E}_R$ and $\mathcal{E}_K$ such that \cref{ErrorBoundsR,ErrorBoundsK} are both satisfied for all $\lambda \in [0,\lambda_{\max}]$.

\begin{theorem}
Consider the dynamical system given by \cref{RDT-Equation}.
\begin{itemize} \item[i)]
The dynamical system undergoes a period doubling bifurcation at $(\lambda,x) = (0,0)$. 
\item[ii)] For $0 < \lambda \le 7.6 \cdot 10^{-5}$ the $2$-periodic orbit of \cref{RDT-Equation} for parameter value $\lambda$ lies inside the box $\mathcal{B}_\lambda$ from \cref{ErrorBoundsKc}, where we take $\mathcal{E}_R = 57.1\sqrt{\lambda} $ and $\mathcal{E}_K = 61.9 \sqrt{\lambda}$. Furthermore, there exists a 1D manifold inside $\mathcal{B}_\lambda$ consisting of heteroclinic connections between the origin and the periodic orbit.
\end{itemize}
\end{theorem}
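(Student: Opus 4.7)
The plan is to combine the abstract framework of \cref{ParametersTheorem,TaylorBoundsProp,ErrorBoundsR,ErrorBoundsK} with explicit symbolic computations from the supplied Mathematica notebook \cite{Mathematica}. The argument splits naturally into the qualitative statement (i) and the quantitative statement (ii).

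For (i), I would first verify the hypotheses of \cref{ParametersTheorem} for the cut-off system \cref{CutOffDynamics}. Conditions 1, 2 and 3a are already checked earlier in this section. The remaining smallness conditions on $L_g$ and $L_c$ are met by choosing $h_c$ and $h_u$ as $g_c$ and $g_u$ multiplied by a smooth cut-off with sufficiently small support and by taking $k_c$ a globally bounded $C^3$ extension of the local expression $\frac{3}{2}x^2$; both can be made arbitrarily small in $C^3$-norm by shrinking the cut-off region. With this choice, the normal form analysis of \cref{NormalForms} already gives $P_R(\lambda,x) = (-1+\lambda)x - 4x^3$, and the non-degeneracy conditions $\frac{\partial^2 R}{\partial \lambda \partial x}(0,0)=1$ and $2 \frac{\partial^3 R}{\partial x^3}(0,0)+3\left(\frac{\partial^2 R}{\partial x^2}(0,0)\right)^2 = -48$ already verified there imply a supercritical period-doubling bifurcation at $(\lambda,x)=(0,0)$. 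Since the cut-off leaves the dynamics unchanged on a neighborhood of $0$, the bifurcation persists for \cref{RDT-Equation}.

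For (ii), I would apply \cref{TaylorBoundsProp} in its parameter-dependent form, together with \cref{TaylorBoundsDerivativeRemark} (to control $D_x(R-P_R)$) and \cref{TaylorBoundsBetterRemark} (to sharpen the enclosure of $R$ using knowledge of $P_R$). The Ansatz $\mathcal{E}_R = 57.1\sqrt{\lambda}$ and $\mathcal{E}_K = 61.9\sqrt{\lambda}$ is dimensionally consistent: on $\mathcal{I}_\lambda$ we have $|x| = O(\sqrt{\lambda})$ and the remainders $R-P_R$, $k_u - P_K$ are of order $\lambda^{3/2}$. The verification reduces, via \cref{NewRemark} and the Min-max Collatz--Wielandt criterion, to checking a componentwise inequality $\mathcal{A}\tilde{C}+b < \tilde{C}$ for the matrix and vector produced by the differentiated conjugacy equation \cref{TaylorBoundsEq3}. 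This is a purely symbolic calculation uniform in $\lambda \in [0,7.6\cdot 10^{-5}]$ and is carried out in \cite{Mathematica}. Having secured these bounds, the hypotheses $\mathcal{E}_R < 1/2$ and $\mathcal{E}_K \le 9/2$ required by \cref{ErrorBoundsR}(ii) and \cref{ErrorBoundsK} are satisfied precisely because $\sqrt{7.6 \cdot 10^{-5}} \cdot 57.1 < 1/2$ and $\sqrt{7.6 \cdot 10^{-5}} \cdot 61.9 < 9/2$.

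It then remains to assemble the geometric conclusions. By \cref{ErrorBoundsR}(ii) the conjugate dynamics $R$ has a unique 2-periodic orbit $x_{\pm}\in W_\pm$, and the entire interval $(x_-, x_+)\subset \mathcal{I}_\lambda$ consists of heteroclinic orbits limiting to the origin forward in time and to the periodic orbit backward in time. Pushing this 1D family forward through $K$ and using \cref{ErrorBoundsKc} locates both the periodic orbit and the heteroclinic manifold inside the explicit box $\mathcal{B}_\lambda$. A final sanity check, namely that $\mathcal{B}_\lambda$ lies in the region where $h_c \equiv g_c$ and $h_u \equiv g_u$ for all $\lambda \in [0, 7.6\cdot 10^{-5}]$, transfers the conclusion from \cref{CutOffDynamics} to the original system \cref{RDT-Equation}. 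The main obstacle is calibrating the cut-off radius, the value of $\lambda_{\max}$, and the two constants $\mathcal{E}_R, \mathcal{E}_K$ simultaneously: one needs $\mathcal{B}_\lambda$ to remain inside the cut-off region while the self-consistency contraction $\mathcal{A}\tilde{C}+b<\tilde{C}$ still closes; the specific value $7.6\cdot 10^{-5}$ and the numerical constants $57.1$ and $61.9$ represent this balance pushed about as far as the method allows, and are what the Mathematica notebook is used to certify.
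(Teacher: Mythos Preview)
Your proposal is essentially the same strategy as the paper's: verify \cref{ParametersTheorem} for a cut-off system, invoke the normal form computations of \cref{NormalForms} for part (i), then for part (ii) run \cref{TaylorBoundsProp} (augmented by \cref{TaylorBoundsDerivativeRemark,TaylorBoundsBetterRemark,NewRemark}) with a self-consistent Ansatz on $\mathcal{E}_R,\mathcal{E}_K$, and feed the result into \cref{ErrorBoundsR,ErrorBoundsK,ErrorBoundsKc}. One small technical discrepancy: the paper does not multiply $g_c,g_u$ by a cut-off but rather precomposes them with a bounded $C^3$ map $\Phi$ built from explicit piecewise-polynomial clamps $\varphi_{\alpha_1,\Delta_1}^{\alpha_2,\Delta_2}$, which yields the concrete values $L_g=0.13$ and $L_c=0.017$ needed to close condition~4 quantitatively; also, a single cut-off (calibrated to contain $\mathcal{B}_{\lambda_{\max}}$) is used for both parts rather than a separate ``sufficiently small'' one for (i).
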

\begin{remark}
As we already mentioned, we have that the periodic orbit of \cref{RDT-Equation} lies inside the image of $W_\pm$. As $W_\pm \sim \sqrt{\lambda}$ and $k_c = x + \frac{3}{2}x^2$ on $\mathcal{I}_\lambda$, the distance between the origin and the periodic orbit has a magnitude of $10^{-3}$.
\end{remark}
\begin{proof}
We used the Mathematica Notebook available at \cite{Mathematica} to check several inequalities in the proof below. We will replace $g_c$ and $g_u$ with $C^3$ bounded functions $h_c$ and $h_u$. To do this, we want to consider $h_c = g_c \circ \Phi$, where $\Phi : \mathbb{R}^2 \to \mathbb{R}^2$ is a bounded $C^3$ function. Furthermore, we want to construct $\Phi$ such that $\Phi \equiv \operatorname{Id}$ in a neighborhood around the origin. Then $h_c$ is bounded in $C^3$ and $h_c \equiv g_c$ in the neighborhood where $\Phi \equiv \operatorname{Id}$. To define $\Phi$, we define $\varphi_{\alpha_1,\Delta_1}^{\alpha_2,\Delta_2} : \mathbb{R} \to \mathbb{R}$ as
\begin{align*}
\varphi_{\alpha_1,\Delta_1}^{\alpha_2,\Delta_2}(x)  = \begin{cases} \alpha_1 - \frac{1}{2} \Delta_1 & x \le \alpha_1 - \Delta_1 \\
					x + \frac{(x - \alpha_1)^6}{\Delta_1^5}+\frac{3 (x-\alpha_1)^5}{\Delta_1^4}+\frac{5 (x - \alpha_1)^4}{2 \Delta_1 ^3}			& \alpha_1 - \Delta_1 \le x \le \alpha_1 \\
					x & \alpha_1 \le x \le \alpha_2 \\
					x -\frac{(x - \alpha_2)^6}{\Delta_2^5}+\frac{3 (x-\alpha_2)^5}{\Delta_2^4}-\frac{5 (x - \alpha_2 )^4}{2 \Delta_2^3}  	& \alpha_2 \le x \le \alpha_2 + \Delta_2 \\
					\alpha_2 + \frac{1}{2} \Delta_2 & \alpha_2 + \Delta_2 \le x  \end{cases}
\end{align*}
Here we assume that $\Delta_1$ and $\Delta_2$ are both positive. We find that $\varphi_{\alpha_1,\Delta_1}^{\alpha_2,\Delta_2}$ is $C^3$, it image is the interval $[a_1 - \Delta_1/2,a_2 + \Delta_2/2]$ and its derivative lies in the interval $[0,1]$ for all $x \in \mathbb{R}$.  

For \cref{ParametersTheorem} we consider the parameter set $W = [-10^{-6}, 7.61 \cdot 10^{-5}]$. We have to take negative $\lambda$-values in $W$ for \cref{ParametersTheorem} if we want to conclude something for $\lambda =0$.

Let $\lambda_{\max} = 7.6 \cdot 10^{-5}$, $\mathcal{E}_R = 57.1 \sqrt{\lambda_{\max}}$, $\mathcal{E}_K = 61.9 \sqrt{\lambda_{\max}}$ and consider the set $\tilde{W} = [-10^{-7}, \lambda_{\max}] \subset W$. Recall the definition of the interval $\mathcal{I}_\lambda \subset \mathbb{R}$ and $\lambda_+$ from \cref{ErrorBoundsR}, and the definition of the box $\mathcal{B}_\lambda \subset \mathbb{R}^2$ from \cref{ErrorBoundsKc}. Then we use $a_1,a_2,b_1,b_2,c_1,c_2$ to denote $\mathcal{I}_{\lambda_{\max}} = [a_1 , a_2]$ and $\mathcal{B}_{\lambda_{\max}} = [b_1 , b_2] \times [c_1, c_2]$. Furthermore, let $\Delta = 10^{-6}$, which we can choose to be arbitrary small, but is chosen to be $10^{-6}$ so we can compute explicit bounds in \cref{ComputedLGLC} below. We define $h_c$ and $h_u$ as 
\begin{align}
h_c(x,y) \isdef g_c\left( \varphi_{b_1,\Delta}^{b_2, \Delta}(x) , \varphi_{c_1, \Delta}^{c_2, \Delta}(y) \right) &&\text{ and } &&
h_u(x,y) &\isdef g_u\left( \varphi_{b_1,\Delta}^{b_2, \Delta}(x) , \varphi_{c_1, \Delta}^{c_2, \Delta}(y) \right). \label{CutoffEquationAnsatzG} \end{align}
Finally, we define $d_1 = - \lambda_{\max,+} - 2\mathcal{E}_R \lambda_{\max} \lambda_{\max,+}$ and $d_2 = \lambda_{\max,+} - 2\mathcal{E}_R \lambda_{\max} \lambda_{\max,+}$. Then we choose $k_c(x) =  \frac{3}{2} \varphi_{d_1,\Delta}^{d_2, \Delta}(x)^2$, which is also a $C^3$ bounded function. For all $\mu \in W$ we compute, where we use Mathematica to check the inequalities,
\begin{align}
\begin{aligned}
\supnorm{Dh} &\le \sup_{\substack{b_1 - \Delta/2 \le x \le b_2 + \Delta/2 \\ c_1 - \Delta/2 \le y \le c_2 + \Delta/2}}  | 12 (x+y)^2 - 24(x+y) - 4 \mu|  < 0.13, \\
\supnorm{Dk_c} &\le \sup_{d_1 -\Delta/2 \le x \le d_2 + \Delta/2} 3 |x| <0.017.  \end{aligned} \label{ComputedLGLC}
\end{align}
Hence we take $L_g = 0.13$ and $L_c = 0.017$, in which case we find with Mathematica that condition 4 of \cref{MainTheorem} holds for $n=3$. In particular, the conditions of \cref{ParametersTheorem} are satisfied for $n=3$ for the system \cref{CutOffDynamics}, which means that there exists a $C^3$ conjugacy $K : \tilde{W} \times \mathbb{R}$ and $C^3$ dynamical system $R : \tilde{W} \times \mathbb{R} \to \mathbb{R}$ such that locally for all $\mu \in \tilde{W}$
\begin{align*}
F_\mu \circ K_\mu = K_\mu \circ R_\mu.
\end{align*}
We will now prove the two statements of the theorem.

\textit{i)} From \cref{NormalForms} it follows that \cref{RDT-Equation} undergoes a period doubling bifurcation at the origin at $\lambda = 0$.

\textit{ii)} Fix $0 < \lambda \le \lambda_{\max}$. We want to use \cref{TaylorBoundsProp} for the dynamical system $F_\lambda \isdef F(\lambda, \cdot) : \mathbb{R}^2 \to \mathbb{R}^2$ to find explicit error bounds on the Taylor approximations of $R_\lambda \isdef R(\lambda,\cdot)$ and $K_\lambda \isdef K(\lambda,\cdot)$ on the neighborhood $0 \in \mathcal{I}_\lambda = [a_1,a_2]$. In particular, we want to find error bounds on the first derivative, thus we use \cref{TaylorBoundsDerivativeRemark} in combination with the proof of \cref{TaylorBoundsProp}. Furthermore, we estimate both $DR(x)$ and $DK(x)$ using the Taylor approximation to obtain better bounds, as we explained in \cref{TaylorBoundsBetterRemark}. Finally, since we have used the Ansatz that 
\begin{align*}
\norm{DR_\lambda(x) - DP_R(\lambda,x)}  &\le \mathcal{E}_R \lambda,  \\
\norm{Dk_{u,\lambda}(x) - DP_K(\lambda,x)} &\le  2 \mathcal{E}_K \lambda,
\end{align*}
we check that the bound we obtain is consistent with both these inequalities.

We use the Ansatz on $R_\lambda$ in order to guarantee that $R_\lambda(\mathcal{I}_\lambda) \subset [d_1, d_2]$, i.e.\ we have that $k_c = \frac{3}{2} x^2$ on the image of $\mathcal{I}_\lambda$ under $R$. Furthermore, the Ansatz on $K_\lambda$ allows us to use \cref{ErrorBoundsK}, which tells us that $K_\lambda(\mathcal{I}_\lambda) \subset \mathcal{B}_\lambda \subset \mathcal{B}_{\lambda_{\max}}$, i.e.\ the cut-off functions we used in  \cref{CutoffEquationAnsatzG} are the identity on the image of $K_\lambda$. We can now apply \cref{TaylorBoundsProp} in combination with \cref{NewRemark}, where we choose the cut-off function $\varphi_{\mathcal{I}_\lambda}$ for \cref{Cut-off-function}. We refer to our Mathematica supplement for the computation of the (generalization of the) system \cref{TaylorBoundsEq4}, as well as checking that the bounds we obtain from solving this system are indeed at most $\mathcal{E}_R\lambda$ and $2 \mathcal{E}_K\lambda$.

Finally, the assertion follows from \cref{ErrorBoundsKc} since we check with Mathematica that $\mathcal{E}_R  < \frac{1}{2}$, $\mathcal{E}_K < \frac{9}{2}$ and $\lambda < \frac{1}{43}$ and thus the conditions of \cref{ErrorBoundsK}, and hence the conditions of \cref{ErrorBoundsKc}, are satisfied.
\end{proof}

\bibliography{bib}
\end{document}